\theoremstyle{plain}
\tikzset{node distance=2cm, auto}
\newtheorem{thm}{Theorem}[section]
\newtheorem{conj}[thm]{Conjecture}
\newtheorem{cor}[thm]{Corollary}
\newtheorem{prop}[thm]{Proposition}
\newtheorem{lm}[thm]{Lemma}
\newtheorem{example}[thm]{Example}
\theoremstyle{definition}
\newtheorem{df}[thm]{Definition}
\newtheorem{notation}[thm]{Notation}
\theoremstyle{remark}
\newtheorem{rem}[thm]{Remark}
\newcommand{\N}{\operatorname{N}}
\newcommand{\Z}{\operatorname{Z}}
\newcommand{\C}{\operatorname{C}}
\newcommand{\SL}{\operatorname{SL}}
\newcommand{\SU}{\operatorname{SU}}
\newcommand{\bS}{{\mathbf S}}
\newcommand{\bT}{{\mathbf T}}
\newcommand{\NNN}{\operatorname{N}}
\newcommand{\wt}{\widetilde}
\newlist{asslist}{enumerate}{1} 
\setlist[asslist]{label=(\roman*), ref=\thethm(\roman*)}
\newlist{thmlist}{enumerate}{1} 
\setlist[thmlist]{label=(\alph*), ref=\thethm(\alph*)}
\newcommand{\Sp}{\operatorname{Sp}}
\newcommand{\bl}{\operatorname{bl}}
\newcommand{\Irr}{\operatorname{Irr}}
\newcommand{\Aut}{\operatorname{Aut}}
\newcommand{\Ind}{\operatorname{Ind}}
\newcommand{\Res}{\operatorname{Res}}
\newcommand{\ol}{\overline}
\newcommand{\wh}{\widehat}
\newcommand{\bC}{{{\mathbf C}}}
\newcommand{\bG}{{{\mathbf G}}}
\newcommand{\bH}{{{\mathbf H}}}
\newcommand{\bW}{{{\mathbf W}}}
\newcommand{\bV}{{{\mathbf V}}}
\numberwithin{equation}{section}
\title{On the Alperin--McKay conjecture for $2$-blocks of maximal defect}
\begin{document}
\begin{abstract}
In this paper, we show that the Alperin--McKay conjecture holds for $2$-blocks of maximal defect. A major step in the proof is the verification of the inductive Alperin--McKay condition for the principal $2$-block of groups of Lie type in odd characteristic.
\end{abstract}

\date{\today}
\author{Julian Brough and Lucas Ruhstorfer}
\thanks{}
\thanks{}
\address{School of Mathematics and Natural Sciences University of Wuppertal, Gau\ss str. 20, 42119 Wuppertal, Germany \newline
Fachbereich Mathematik, TU Kaiserslautern, 67653 Kaiserslautern, Germany}
\email{brough@uni-wuppertal.de,\, ruhstorfer@mathematik.uni-kl.de}

\maketitle

\section{Introduction}

In the representation theory of finite groups some of the most important conjectures predict a very strong relationship between the representations of a finite group $G$ and certain representations of its $\ell$-local subgroups, where $\ell$ is a prime dividing the order of $G$. One of these conjectures is the Alperin--McKay conjecture.
For an $\ell$-block $b$ of $G$ we denote by $\Irr_0(G,b)$ its set of height zero characters.

\begin{conj}[Alperin--McKay]
	Let $b$ be an $\ell$-block of $G$ with defect group $Q$ and $B$ its Brauer correspondent in $\mathrm{N}_G(Q)$. Then $$|\Irr_0(G,b)|= |\Irr_0(\mathrm{N}_G(Q),B)|.$$
\end{conj}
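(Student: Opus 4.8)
The plan is to follow the standard reduction strategy for local--global counting conjectures. First I would invoke the reduction theorem of Sp\"ath: the Alperin--McKay conjecture holds at the prime $\ell$ for every finite group provided that every finite non-abelian simple group satisfies the \emph{inductive Alperin--McKay (iAM) condition} at $\ell$. This moves the problem from arbitrary finite groups to the classification of finite simple groups, so that it suffices to verify the iAM condition for each simple group $S$. That condition demands considerably more than a bijection on height-zero characters: one needs, for a universal covering group $\wh S$ of $S$ and a block $\mathfrak b$ of $\wh S$ lying over a chosen block of $S$, an $\Aut(S)_{\mathfrak b}$-equivariant bijection between $\Irr_0(\wh S,\mathfrak b)$ and the height-zero characters of the Brauer correspondent block in the relevant local subgroup, which moreover is compatible with the Clifford theory of these characters over $S$ --- equality of stabilisers in $\Aut(S)$, equality of the associated projective representations and hence of their second-cohomology classes --- so that the bijection can be propagated through arbitrary central and quotient extensions.

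Second, I would treat the simple groups family by family, using the literature wherever it applies. The iAM condition is already known for the sporadic groups, for the alternating groups and their covering groups, for the groups of Lie type in their defining characteristic, and for several infinite families (and small primes) in non-defining characteristic; the McKay side, where $Q$ is a Sylow $\ell$-subgroup, i.e.\ the case of blocks of maximal defect, is the best understood. The real work therefore concentrates on groups $S=\bG^F$ of Lie type in non-defining characteristic $\ell$. Here I would exploit the Jordan decomposition of characters and Lusztig's partition of $\Irr(\bG^F)$ into rational series, together with the explicit description --- following Brou\'e--Malle--Michel and Cabanes--Enguehard --- of the defect groups $Q$ of unipotent and (quasi-)isolated blocks and of the structure of their normalisers $\N_{\bG^F}(Q)$, which is governed by $e$-split Levi subgroups, $\Phi_e$-tori and relative Weyl groups. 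One would assemble the bijection on $\Irr_0$ from the known bijections on the quasi-isolated part and the relative-Weyl-group combinatorics, and then track the action of diagonal, field and graph automorphisms on both sides together with the behaviour of stabilisers under the regular embedding $\bG^F\hookrightarrow \wbG^F$ into a group with connected centre.

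The main obstacle is precisely this last step: passing from the numerical equality $|\Irr_0(G,b)|=|\Irr_0(\N_G(Q),B)|$, which is accessible in many cases, to the full iAM condition --- the $\Aut(S)$-equivariance \emph{and} the compatibility with Clifford theory and the cohomological obstruction classes --- uniformly across all groups of Lie type in non-defining characteristic. Controlling the automorphism action on the local side is delicate because $Q$ and $\N_{\bG^F}(Q)$ are described through $\Phi_e$-tori whose normalisers are often non-split extensions of relative Weyl groups, and the extendibility of a local character to its inertia group in $\Aut(S)$ must be matched with the extendibility of the corresponding global character, with matching cocycles; reconciling these two extensibility problems is where the difficulty lies. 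For the special case treated in this paper --- $\ell=2$ and $Q$ a Sylow $2$-subgroup --- two simplifications intervene: the block-theoretic refinement of the reduction theorem allows one to restrict to the \emph{principal} $2$-block, and the prime being $2$ forces $e\in\{1,2\}$, so that the local structure is comparatively transparent. The substantive case that remains, and that is carried out in the body of the paper, is then the verification of the inductive Alperin--McKay condition for the principal $2$-block of the groups of Lie type in odd characteristic.
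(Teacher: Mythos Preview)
The statement you are attempting to prove is labelled \emph{Conjecture} in the paper, not Theorem; the paper does not prove it and does not claim to. What the paper establishes is the special case stated as Theorem~\ref{main}: the Alperin--McKay conjecture for $2$-blocks of maximal defect. There is therefore no ``paper's own proof'' of the full conjecture to compare your proposal against.

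Your outline is an accurate description of the \emph{strategy} one would follow, and indeed you correctly identify Sp\"ath's reduction to the inductive AM condition and the case-by-case verification over the classification. But this is not a proof: the inductive AM condition has \emph{not} been verified for all $\ell$-blocks of all quasi-simple groups, and in particular it remains open for many non-principal, non-maximal-defect blocks of groups of Lie type in non-defining characteristic. You yourself concede this when you write that ``the real work concentrates on groups of Lie type in non-defining characteristic'' and that the equivariance and cocycle-matching step ``is where the difficulty lies''. That difficulty is genuinely unresolved in general. Your final paragraph even narrows correctly to what the paper actually does --- the principal $2$-block in odd characteristic --- which is exactly the content of Theorem~\ref{principal}, not of the conjecture. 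So the gap is not a technical slip but a categorical one: you have sketched the reduction programme, not completed it, and the full Alperin--McKay conjecture remains open.
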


In this article we show that the Alperin--McKay conjecture holds for $2$-blocks of maximal defect.

\begin{thm}\label{main}
Let $b$ be a $2$-block of a finite group $G$ whose defect group is a Sylow $2$-subgroup $Q$ and $B$ its Brauer correspondent in $\mathrm{N}_G(Q)$. Then $$|\Irr_0(G,b)|= |\Irr_0(\mathrm{N}_G(Q),B)|.$$
\end{thm}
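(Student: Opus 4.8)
The plan is to derive Theorem~\ref{main} from a reduction of the Alperin--McKay conjecture to an inductive condition on finite simple groups, in the spirit of the Navarro--Sp\"ath reduction theorem, and then to verify that condition for every simple group at the prime $2$ (in fact only for a very restricted class of blocks). As a preliminary I would record the reduction in a form adapted to maximal defect: in a minimal counterexample $(G,b)$ one puts $G$ into the usual reduced shape ($\mathrm{O}_{2'}(G)=1$, $b$ quasi-primitive over every normal subgroup, $G$ generated by its components together with a defect group), and since for a $2$-block of maximal defect a defect group is a Sylow $2$-subgroup, Kn\"orr's theorem forces the covered block of each component to again be of maximal defect. Hence what must be established is the inductive Alperin--McKay (iAM) condition for maximal-defect $2$-blocks of quasi-simple groups; as the abstract signals, the essential and by far hardest case is the \emph{principal} $2$-block. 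It is worth noting that for a $2$-block $b$ of maximal defect one has $\Irr_0(G,b)=\Irr(b)\cap\Irr_{2'}(G)$, so Theorem~\ref{main} is exactly the block-by-block refinement of the ($p=2$) McKay conjecture of Malle--Sp\"ath, and the $\Aut$-equivariant descriptions of odd-degree characters from that work can be reused.

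The verification for simple groups follows the classification. For the alternating groups, the Tits group and the sporadic groups, the maximal-defect $2$-blocks and the pertinent Sylow $2$-normalizers are concrete enough to treat the iAM condition directly, and much is already in the literature. For simple groups of Lie type in characteristic $2$, a Sylow $2$-subgroup $Q$ is the unipotent radical $U$ of a Borel subgroup $B$, with $\mathrm{N}_G(U)=B$; the height-zero characters of a maximal-defect block lie among the odd-degree (``semisimple'') characters, and an $\Aut$-equivariant parametrization of these and of the characters of $B$ can be produced from defining-characteristic representation theory. The remaining, and principal, family is the simple groups of Lie type in \emph{odd} characteristic.

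So let $G=\bG^F$ be quasi-simple of Lie type in odd characteristic with principal $2$-block $B_0$, and let $Q$ be a Sylow $2$-subgroup. Now $Q$ is not a $p$-group for the defining $p$, but is assembled from the $2$-part of a suitably chosen (generally non-split) maximal torus and the associated relative Weyl group, and $\mathrm{N}_G(Q)$ has a correspondingly intricate structure. I would proceed in four steps. First, use a regular embedding $\bG\hookrightarrow\wbG$ with connected centre: maximal extendibility of characters from $\bG^F$ to $\wbG^F$, together with the known Clifford theory of $\wbG^F/\bG^F$, makes the diagonal automorphisms tractable and lets one reduce many statements about $\Irr_0(\bG^F,B_0)$ to the $\wbG^F$-level. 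Second, on the global side, describe $\Irr_0(\bG^F,B_0)$, i.e.\ the odd-degree characters in $B_0$, together with the action of $\Aut(\bG^F)$ (diagonal, field and, where present, graph or triality automorphisms), using the known equivariant parametrization of odd-degree characters. Third, on the local side, analyse $\mathrm{N}_{\bG^F}(Q)$ and the Brauer correspondent $B_0'$ of $B_0$, and describe $\Irr_0(\mathrm{N}_{\bG^F}(Q),B_0')$ with the compatible action of the stabilizer of $Q$ in $\Aut(\bG^F)$; here one uses the structure of $Q$ relative to a maximal torus and the relevant Weyl group (Carter--Fong-type descriptions in the classical types) to pin down $\mathrm{N}_{\bG^F}(Q)$. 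Fourth, construct an $\Aut(\bG^F)$-equivariant bijection $\Irr_0(\bG^F,B_0)\to\Irr_0(\mathrm{N}_{\bG^F}(Q),B_0')$ and verify the remaining part of the iAM condition --- compatibility with the central extensions --- by checking, via Sp\"ath's criteria (in particular the Butterfly Theorem and maximal-extendibility statements), that the bijection respects Clifford theory over $\wbG^F$ and the attendant projective-representation data; the finitely many quasi-simple groups with exceptional Schur multipliers are handled separately.

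The hard part is this last step in the odd-characteristic Lie-type case. Even producing a bijection on odd-degree characters equivariant under $\Aut(G)$ is delicate --- it is essentially the $p=2$ McKay bijection --- but the iAM condition additionally forces one to track the field and graph automorphisms on $\Irr_0(B_0)$ and on the unwieldy local subgroup $\mathrm{N}_G(Q)$ simultaneously, and to control how Clifford theory over $\wbG^F$ interacts with all of this. The usual trouble spots --- groups of small rank, $\PSL_n(q)$ with $2\mid\gcd(n,q-1)$ and $\PSU_n(q)$ with $2\mid\gcd(n,q+1)$, types carrying extra graph automorphisms such as $D_4$, and the exceptional covers --- will each need individual treatment, and I expect these, rather than the generic argument, to absorb most of the work. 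Once the iAM condition is secured for the principal $2$-block of every simple group (and the few other maximal-defect blocks surviving the reduction are dispatched), Theorem~\ref{main} follows from the reduction theorem.
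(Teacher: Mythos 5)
Your architecture --- reduce via a Cabanes--Sp\"ath-type reduction theorem to maximal-defect $2$-blocks of covering groups of simple groups, dispose of the alternating, sporadic, exceptional-multiplier and defining-characteristic cases, and concentrate on an $\Aut$-equivariant bijection for the principal $2$-block of Lie-type groups in odd characteristic built from a regular embedding, a global parametrisation of odd-degree characters and an explicit analysis of $\N_G(Q)$ --- is essentially the architecture of the paper. (The paper's key structural inputs on the local side, which you gesture at but do not identify, are Malle's theorem placing $\N_G(Q)$ inside the normaliser of a Sylow $d$-torus for $d$ the order of $q$ modulo $4$, and the resulting fact that $\N_G(Q)=\C_G(Q)Q$, so that $\Irr_0$ of the local block is just the set of linear characters of $Q$; the global side is parametrised by the principal $1$-Harish-Chandra series.)

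There is, however, a genuine gap in how you pass from ``all maximal-defect $2$-blocks of quasi-simple groups'' to ``the principal $2$-block''. For $G=\bG^F$ in odd characteristic the maximal-defect blocks are by no means exhausted by the principal one: every Lusztig series $\mathcal{E}(G,s)$ with $s$ a semisimple $2'$-element whose centraliser in the dual group contains a Sylow $2$-subgroup contributes such blocks, and for classical groups there are many such $s$, not ``a few''. Your closing parenthesis ``the few other maximal-defect blocks surviving the reduction are dispatched'' is therefore not a remark but a missing chapter, and the reduction theorem to simple groups does nothing to remove these blocks. The paper closes this hole with two inputs you never invoke: the Jordan-decomposition reduction of the inductive AM-condition to quasi-isolated blocks (and, in type $A$, further to unipotent blocks), which transports the condition along Bonnaf\'e--Dat--Rouquier equivalences; and Bonnaf\'e's classification of quasi-isolated semisimple elements, from which one deduces that a quasi-isolated $2$-block of maximal defect must be the principal block --- for exceptional types by comparing $|\C_{G^\ast}(s)|_2$ with $|G|_2$ in the Kessar--Malle tables and using that a maximal-defect block contains an odd-degree character, hence is unipotent outside types $A$ and $E_6$. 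Without this reduction, or some substitute handling the non-quasi-isolated maximal-defect blocks directly, your argument does not terminate at the principal block and the theorem does not follow.
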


Späth \cite[Theorem C]{IAM} showed that the Alperin--McKay conjecture holds for the prime $\ell$ if the so-called inductive Alperin--McKay condition holds for all blocks of all finite quasi-simple groups with respect to the prime $\ell$. It is therefore possible to approach the Alperin--McKay conjecture through the classification of finite simple groups. Thanks to the work of several authors the
inductive condition has been shown for all finite simple groups except in the case where $G$ is a group of Lie type defined over a field of characteristic $p \neq \ell$. Hence, we will focus on the case where $G$ is a group of Lie type defined over a field of odd characteristic. In their seminal paper, Malle--Späth \cite{MS} showed that in this case $G$ is McKay-good for the prime $2$. For this they constructed a bijection $\Irr_{2'}(G) \to \Irr_{2'}(M)$ between the set of irreducible odd degree characters of $G$ and the corresponding set of characters of a well chosen subgroup $M$ of $G$ containing $\N_G(Q)$, for $Q$ a Sylow $2$-subgroup of $G$. Based on their bijection we are able to construct an explicit bijection between the height zero characters in the  principal blocks of $G$ and $\N_G(Q)$ and show the following.

\begin{thm}\label{principal}
Let $G$ be a quasi-simple group of Lie type defined over a field of odd characteristic. Then the principal $2$-block of $G$ satisfies the AM-condition.
\end{thm}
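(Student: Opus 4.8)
We may and do assume that $G=\bG^F$ is the universal covering group of the simple group $\bG^F/\Zent(\bG^F)$, where $\bG$ is simple of simply connected type and $F$ is a Frobenius endomorphism giving $\bG$ an $\F_q$-structure with $q$ a power of an odd prime; the central quotients of $G$ follow by the same argument, and the finitely many simple groups of Lie type with an exceptional Schur multiplier are checked separately. The main case is $\bG$ of (possibly twisted) classical type $A$, $B$, $C$ or $D$; the exceptional types require a similar, and generally less intricate, analysis, partly relying on results already in the literature. Fix a Sylow $2$-subgroup $Q$ of $G$. Since $|G:Q|$ is odd, the principal block $B_0(G)$ has defect group $Q$, the height-zero characters of $B_0(G)$ are exactly the odd-degree characters lying in $B_0(G)$, and by Brauer's third main theorem $B_0(\N_G(Q))$ is the Brauer correspondent of $B_0(G)$ in $\N_G(Q)$. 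By a Frattini argument we may choose $Q$ so that $\Aut(G)_Q$ still surjects onto $\Out(G)$ and, following \cite{MS}, so that $Q$ is contained in the Malle--Sp\"ath local subgroup $M$ satisfying $\N_G(Q)\le M\le G$; then $Q\in\mathrm{Syl}_2(M)$ and $\N_M(Q)=\N_G(Q)$, so $(B_0(M),B_0(\N_G(Q)))$ is again a Brauer correspondent pair.

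The first task is to restrict the Malle--Sp\"ath bijection $\Omega\colon\Irr_{2'}(G)\to\Irr_{2'}(M)$ from \cite{MS} to the principal block and then to push it down to $\N_G(Q)$. Because $\Omega$ is built up Lusztig series by Lusztig series and is compatible with central characters, I would use the description of $\Irr(B_0(G))$ inside $\bigcup_s\cE(G,s)$, where $s$ runs over the relevant semisimple $2$-elements of the dual group, together with the corresponding description inside $M$, to show that $\Omega$ restricts to a bijection $\Irr_0(B_0(G))\to\Irr_0(B_0(M))$. The descent from $M$ to $\N_G(Q)$ is then a direct comparison built on the explicit structure of $M$ obtained in \cite{MS}: one checks that $\Irr_0(B_0(M))$ is in a canonical, $\Aut(G)_Q$-equivariant and central-character-preserving bijection with $\Irr_0(B_0(\N_G(Q)))$. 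Composing yields the required bijection $\Omega_0\colon\Irr_0(B_0(G))\to\Irr_0(B_0(\N_G(Q)))$.

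The second and main task is to verify the cohomological part of the inductive Alperin--McKay condition of \cite{IAM}. For a fixed overgroup $\wh G$ of $G$ realizing $\Aut(G)$ in the sense of \cite{IAM}, and $\wh N:=\N_{\wh G}(Q)$, one must establish a central isomorphism of character triples $(\wh G_\chi,G,\chi)\geq_c(\wh N_{\Omega_0(\chi)},\N_G(Q),\Omega_0(\chi))$ for every $\chi\in\Irr_0(B_0(G))$. I would follow the now-standard two-layer scheme. First, passing to a regular embedding $\bG\hookrightarrow\wbG$ with connected centre, one arranges that $\Omega_0$ is $\wbG^F$-equivariant and compatible with a fixed $\wbG^F$-equivariant extension map; this takes care of the diagonal automorphisms and leaves only the field and graph automorphisms. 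Second, one arranges that $\Omega_0$ is $\wbG^F\rtimes E$-equivariant, where $E$ is the pertinent group of field and graph automorphisms, and that chosen extensions of $\chi$ and of $\Omega_0(\chi)$ to their stabilizers in $\wbG^F\rtimes E$ correspond under $\Omega_0$; granting this, the relation $\geq_c$ follows from the character-triple criterion of \cite{IAM}.

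The crux --- and the step I expect to be hardest --- is this last, simultaneous compatibility of $\Omega_0$ with the extensions to $\wbG^F\rtimes E$ on the global and on the local side. Three features of the prime $2$ make it delicate. First, $2$ is a bad prime for types $B$, $C$ and $D$, so the block-theoretic bookkeeping --- heights, central characters, and the structure of $M$ --- is heavier than in the odd-prime situation treated in earlier work. Second, the $2$-parts of the Schur multiplier and of $\Out(G)$ can be large, so the relevant projective representations and their cohomology classes must be determined explicitly rather than dismissed by coprimeness. Third, the graph automorphism of $D_n$, together with triality for $D_4$, acts nontrivially on the choices of $Q$ and $M$ and forces a careful case analysis. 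On top of this, everything must be carried through uniformly in the class of $q$ modulo $4$, which governs whether the local subgroup is of ``plus'' or ``minus'' type.
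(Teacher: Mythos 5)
Your strategy is essentially the one the paper follows: reduce to the universal covering group (exceptional Schur multipliers aside), restrict the Malle--Sp\"ath correspondence to the principal block, descend from the intermediate subgroup to $\N_G(Q)$, and then verify the character-triple condition by the two-layer scheme (regular embedding first, then field and graph automorphisms). Two remarks on how the paper makes your ``direct comparison'' at $M$ versus $\N_G(Q)$ precise: the descent rests on the structural facts that Sylow $2$-subgroups of Weyl groups are self-normalising and that, outside the exceptions below, $\N_G(Q)=\C_G(Q)Q$, so that restriction to $Q$ is already a bijection on height-zero characters of the principal block of the normaliser; and the global height-zero characters are parametrised not by Lusztig series directly but by the principal $1$-Harish-Chandra series (pairs $(\lambda_0,\eta_0)$ with $\lambda_0$ of $2$-power order and stable under a fixed Sylow $2$-subgroup $W_2$ of the Weyl group), using Malle's result that all height-zero characters of the principal $2$-block are principal series characters.

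The one genuine gap is your uniform treatment of all types. For $G=\Sp_{2n}(q)$ with $q\equiv 3,5 \pmod 8$ and for ${}^3D_4(q)$ with $q\not\equiv 1\pmod 8$, the containment $\N_G(Q)\le \N_G(\bS)$ for $\bS$ the Sylow $d$-torus fails ($d$ the order of $q$ mod $4$), the equality $\N_G(Q)=\C_G(Q)Q$ fails, and the height-zero characters of the principal block do not all lie in the principal $1$-Harish-Chandra series (already for $\SL_2(q)$, $q\equiv 3\pmod 4$, only two of the four height-zero characters do). So the descent you propose breaks down there, and these cases need a separate argument; the paper disposes of them by a soft counting criterion exploiting that $\mathrm{Out}(G)$ is cyclic, that all relevant characters are trivial on the centre, and that a cyclic-equivariant McKay bijection always exists. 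You should either exclude these cases from the main construction and supply such an argument, or explain how the Malle--Sp\"ath subgroup used in those cases (which is not the torus normaliser) still admits a block-preserving descent to $\N_G(Q)$.
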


In a previous article, the second author has reduced the verification of the inductive  Alperin--McKay condition to quasi-isolated blocks of $G$ \cite{Jordan2} and then subsequently for groups of type $A$ to unipotent blocks \cite{Jordan3}. 
One major hurdle that arises when making use of this reduction, in its current form, is that the possibility to choose a suitable subgroup $M$ as done in Malle-Späth no longer holds.
As a consequence of the bijection explicitly constructed to prove Theorem \ref{principal} and the classification of quasi-isolated elements we obtain the following:

\begin{cor}\label{class}
	Let $G$ be a quasi-simple group of classical Lie type. Then every $2$-block of $G$ satisfies the AM-condition.
\end{cor}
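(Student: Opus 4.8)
The plan is to reduce, via the reductions of \cite{Jordan2, Jordan3} together with the classification of quasi-isolated semisimple elements, to the principal $2$-block, and then to quote Theorem~\ref{principal}. First I would dispose of the defining-characteristic case: if $G$ is defined over a field of characteristic $2$, then $G$ has a unique $2$-block of positive defect, the principal one, whose defect group is the unipotent radical of an $F$-stable Borel subgroup, and the AM-condition for it is already known; so from now on assume $G$ is defined over a field of odd characteristic $p$. By \cite{Jordan2} it then suffices to verify the AM-condition for the quasi-isolated $2$-blocks of $\bG^F$, where $\bG$ is simple simply connected of the relevant classical type and $(\bG^*,F)$ is dual to $(\bG,F)$; recall a $2$-block of $\bG^F$ is quasi-isolated exactly when the semisimple $2'$-element $s\in\bG^{*F}$ labelling it is quasi-isolated in $\bG^*$.

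Next I would go through the classical types. If $\bG^*$ is of type $B_n$, $C_n$, $D_n$ or ${}^2D_n$, Bonnafé's classification shows every quasi-isolated semisimple element of $\bG^*$ has $2$-power order (away from the eigenvalues $\pm 1$ only $\pm\sqrt{-1}$ occur, together with the finite list of extra isolated classes in the adjoint $D_n$-cases), so the only quasi-isolated $2'$-element is $s=1$; hence the principal block is the unique quasi-isolated $2$-block of $\bG^F$, and Theorem~\ref{principal} applies. If $\bG^*$ is of type $A_{n-1}$ or ${}^2A_{n-1}$, there are genuine non-principal quasi-isolated $2$-blocks, labelled by the ``evenly spaced'' semisimple elements of odd order $r\mid n$; here I would invoke \cite{Jordan3} to reduce further to the unipotent $2$-blocks. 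Since $p$ is odd, $2\mid q-1$ (resp.\ $2\mid q+1$), so the $2$-modular distribution of the unipotent characters of $\GL_n(q)$ (resp.\ $\GU_n(q)$) collapses to a single block; a short Clifford-theoretic argument along $\GL_n(q)\ge\SL_n(q)$ (resp.\ $\GU_n(q)\ge\SU_n(q)$) then shows the principal $2$-block is the only unipotent $2$-block of $\bG^F$, and Theorem~\ref{principal} again finishes the case.

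The part I expect to be the main obstacle is the passage back from these principal blocks to the original block $B$ of $G$ through the reductions of \cite{Jordan2, Jordan3}. As noted after Theorem~\ref{principal}, in their present form these reductions do not permit the choice of an intermediate subgroup $M$ with $\N_G(Q)\le M\le G$ as in \cite{MS}, so a mere existence statement for an Alperin--McKay bijection is not enough: one needs a bijection carrying the $\Aut(G)_Q$-equivariance and central-character compatibilities required by the inductive condition, and in a form robust enough to be transported through Bonnafé--Dat--Rouquier Jordan decomposition and the isogenies relating $\bG^F$, $(\bG/\Zent(\bG))^F$ and their duals. The explicit bijection constructed in the proof of Theorem~\ref{principal} is tailored to carry exactly this data, so I expect the bulk of the work for the corollary to lie in the bookkeeping that checks this bijection transports correctly along those equivalences; granting that, the statement follows.
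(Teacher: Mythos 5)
Your proposal is correct and follows essentially the same route as the paper: reduce via the Jordan-decomposition theorem of \cite{Jordan2} (and, in type $A$, the further reduction of \cite{Jordan3}) to quasi-isolated respectively unipotent $2$-blocks, use Bonnaf\'e's classification to see that outside type $A$ the only quasi-isolated $2'$-element is $s=1$, and conclude with Theorem~\ref{principal}. The only points you gloss over that the paper makes explicit are the separate treatment of exceptional Schur multipliers, the remark that subdiagrams of classical diagrams are classical (needed to apply the reduction), and the citation (Cabanes--Enguehard) guaranteeing that the principal block is the unique unipotent $2$-block in types $B$, $C$, $D$ -- but these are minor and your argument is sound.
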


Unfortunately, if $G$ is a group of Lie type with exceptional root system there are many quasi-isolated $2$-blocks. However, one can show that the principal $2$-block is the unique quasi-isolated $2$-block of maximal defect.

\begin{cor}\label{exc}
	Let $G$ be a quasi-simple group of exceptional Lie type. Then every $2$-block of maximal defect of $G$ satisfies the AM-condition.
\end{cor}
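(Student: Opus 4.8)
The plan is to deduce Corollary \ref{exc} by combining Theorem \ref{principal} with the reduction theorem of Späth and the earlier work on quasi-isolated blocks. Recall that by \cite[Theorem C]{IAM} it suffices to verify the inductive Alperin--McKay (AM) condition for every $2$-block of every finite quasi-simple group. For quasi-simple groups that are not of Lie type in odd characteristic this is already known, so we may restrict to the case $G$ quasi-simple of exceptional Lie type in odd characteristic, and to a $2$-block $b$ of $G$ of maximal defect, i.e.\ with defect group a Sylow $2$-subgroup $Q$.

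The first step is to reduce to quasi-isolated blocks. By \cite{Jordan2} the inductive AM condition for $b$ follows once it is known for the quasi-isolated $2$-blocks of the relevant (possibly smaller) groups of Lie type; more precisely, a non-quasi-isolated block is Morita/Jordan equivalent, in a way compatible with the local structure and the action of automorphisms, to a block of a proper Levi subgroup, and an inductive argument on the dimension of $\bG$ then applies. So it remains to treat quasi-isolated $2$-blocks of $G$ of maximal defect.

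The second, and key, step is the classification input: among the quasi-isolated $2$-blocks of a quasi-simple group of exceptional Lie type in odd characteristic, the principal block is the only one of maximal defect. This is where I expect the main work to lie. The argument should go through the Bonnaf\'e--Rouquier/Enguehard description of quasi-isolated elements $s$ of order prime to $2$ in the dual group and the associated blocks: for each such $s$ one computes the generic order of $\Cent_{\bG^*}(s)$ and checks that unless $s=1$ the index $[\bG^*:\Cent_{\bG^*}(s)]$ is divisible by $2$, forcing the corresponding blocks (which lie in the Lusztig series $\cE_\ell(G,s)$) to have non-maximal defect. This is a finite check over the (short) list of exceptional types $G_2, F_4, E_6, {}^2\!E_6, E_7, E_8$ and their quasi-isolated classes; the even part of $|\Cent_{\bG^*}(s)|$ can be read off the cyclotomic polynomial factorisation of the order polynomial. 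Once this is established, the only quasi-isolated $2$-block of maximal defect of $G$ is the principal block $B_0$, and Theorem \ref{principal} tells us that $B_0$ satisfies the inductive AM condition.

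Combining these steps: $b$ is either non-quasi-isolated, in which case \cite{Jordan2} and induction apply, or quasi-isolated of maximal defect, in which case by the classification step $b=B_0$ and Theorem \ref{principal} applies. Hence every $2$-block of maximal defect of $G$ satisfies the AM-condition, which is Corollary \ref{exc}. The main obstacle is the second step, the defect computation for quasi-isolated classes; it is not conceptually deep but requires care to cover the simply connected versus adjoint versions and the twisted type ${}^2\!E_6$, and to handle the interaction with the center of $\bG$ when passing between quasi-simple groups and their Lie-type ambient groups. The reduction step also needs the results of \cite{Jordan2} to be applicable for all the quasi-isolated classes that survive, but since those are exactly the non-maximal-defect ones, no additional AM verification beyond Theorem \ref{principal} is needed here.
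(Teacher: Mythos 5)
Your overall strategy coincides with the paper's: reduce to quasi-isolated blocks via \cite{Jordan2}, show that the only quasi-isolated $2$-block of maximal defect is the principal one, and then invoke Theorem \ref{principal}. The difference lies in how you carry out the middle step. You propose a uniform computation: for every quasi-isolated $s\neq 1$ of odd order in the dual group, verify that $|C_{G^*}(s)|_2<|G|_2$, which bounds the defect via \cite[Lemma 2.6(a)]{KessarMalle}. The paper performs this explicit check only for $E_6(\pm q)$, using \cite[Table 3]{KessarMalle}; for the other exceptional types it uses a character-theoretic shortcut: outside types $A_n$, $D_{2n+1}$ and $E_6$ the longest element acts by inversion on the maximal torus, so every odd-degree character lies in a Lusztig series $\mathcal{E}(G,t)$ with $t$ a $2$-element (Remark \ref{hz characters}); since a block of maximal defect contains a height-zero, hence odd-degree, character, such a block must be unipotent. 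Your computational route should succeed (it is exactly what the paper does in the one case where the shortcut fails), but note that $|C_{G^*}(s)|_2$ is only an upper bound for the defect, so your approach stands or falls with the arithmetic actually coming out even in every case; the paper's argument for the non-$E_6$ types does not depend on that.

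There is one genuine gap: your classification step only eliminates $s\neq 1$, i.e.\ it shows that a quasi-isolated block of maximal defect is unipotent. For the prime $2$ the exceptional groups have several unipotent blocks, so "unipotent of maximal defect" does not immediately yield "principal". The paper closes this by citing (the proof of) \cite[Theorem A]{Enguehard}, which shows that the principal $2$-block is the unique unipotent block of maximal defect; you need this input as well. A smaller omission is the case of quasi-simple groups whose simple quotient has exceptional Schur multiplier, for which the Lie-theoretic setup does not apply directly and which the paper treats separately in Lemma \ref{exceptional}.
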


Using this we are able to settle the  AM-condition for blocks of maximal defect of finite quasi-simple groups, which is then enough to establish Theorem \ref{main}.

\subsection*{Structure of the paper}
In Section 2 we derive some fundamental results on the structure of normalisers of Sylow $2$-subgroups of groups of Lie type. This will be used in Section 3 to provide a description of the height zero characters in the principal block of this normaliser. In the same section we moreover give a parametrisation of the height zero characters of the principal block of $G$ in terms of the $1$-Harish-Chandra series. In Section 4 and Section 5 we study the action of group automorphisms of $G$ on our parametrisation of characters. This will be used in Section 6 to prove the AM-condition for the principal block. In Section 7 we deal with the remaining finite simple groups and in Section 8 we prove our main results. 

\subsection*{Acknowledgments} The authors would like to thank both Britta Späth and Gunter Malle for discussions relating to previous drafts of this article. This paper is a contribution via the second author to the SFB TRR 195, and the first author is supported by the DFG (Project: BR 6142/1-1).

\section{Sylow $2$-subgroups}

\subsection{Weyl groups}

It is well known that the Sylow $2$-subgroups of the symmetric group are self-normalising.
That is for $P\in {\rm Syl}_2(\mathfrak{S}_n)$, we have that $\NNN_{\mathfrak{S}_n}(P)=P$.
It turns out for all Weyl groups of irreducible type that the Sylow $2$-subgroups will be self-normalising.
In the following we denote by $\C_n$ the cyclic group of order $n$, while $C_n$ denotes the root system of type $C$ with $n$ nodes.

\begin{lm}\label{self normalising}
Let $W$ be a Weyl group. Then every Sylow $2$-subgroup of $W$ is self-normalising.
\begin{proof}
Since any Weyl group is a direct product of irreducible Weyl groups we can assume that $W$ is irreducible.
The case $W(A_n)\cong \mathfrak{S}_{n+1}$ is well-known, which moreover implies the case $W(C_n)\cong \C_2 \wr \mathfrak{S}_n$.
The group $W(D_n)$ also follows from the symmetric group as it arises as a normal subgroup of index 2 in $W(C_n)$ isomorphic to $\C_2^{n-1}\rtimes \mathfrak{S}_n$ (which can be constructed as the quotient of $W(C_n)$ by the kernel of the homomorphism $\C_2^n\rightarrow \C_2$ which maps $(g_1,\dots,g_n)$ to the product $g_1\dots g_n$).
This only leaves the exceptional cases. The result is immediate for $W(G_2)\cong {\rm Dih}_{12}$.
For the remaining cases the description of these groups provided in \cite[Section 2.12]{HumphReflec} will be taken.

Observe that $W(F_4)$ arises as the semidirect product of $W(D_4)$ with the automorphism group of the Dynkin diagram of type $D_4$.
The group $W(D_4) \cong (\C_2)^3 \rtimes \mathfrak{S}_4$ is generated by signed permutations $g_1=(1,2)(-1,-2)$ $g_2=(2,3)(-2,-3)$, $g_3=(3,4)(-3,-4)$ and $g_4=(3,-4)(-3,4)$.
Set $\gamma_1$ to be the automorphism of order $2$ fixing both $g_1,g_2$ and interchanging $g_3$ and $g_4$, while $\gamma_2$ denotes the automorphism of order $3$ which fixes $g_2$ and permutes $g_1,g_3$ and $g_4$ cyclically.
Then $W(F_4)\cong W(D_4)\rtimes \langle \gamma_1,\gamma_2\rangle$.
The group $W(D_4)$ has three Sylow $2$-subgroups one of which must be fixed by $\gamma_1$.
Moreover only one Sylow $2$-subgroup of $W(D_4)$ contains $g_2$ and thus all three subgroups are fixed by $\gamma_2$.
In particular, $W(D_4)$ has a Sylow $2$-subgroup $Q$ which is fixed by both automorphisms $\gamma_1$ and $\gamma_2$.
Set $P:=\langle Q,\gamma_1\rangle$ which is a Sylow $2$-subgroup of $W(F_4)$.
As $\N_{W(F_4)}(Q)=\langle Q,\gamma_1,\gamma_2\rangle$ and $P^{\gamma_2}=\langle Q, \gamma_1^{\gamma_2}\rangle\ne P$, it follows that $\N_{W(F_4)}(P)=P$.

The group $W(E_6)$ contains a subgroup $W^+(E_6)\cong \SU_4(2)$ of index two.
In $\SU_4(2)$ the normaliser of a Sylow $2$-subgroup $Q$ is a Borel subgroup $B$, but $B=Q$ as $q=2$.
Hence $W^+(E_6)$ and thus $W(E_6)$ has self-normalising Sylow $2$-subgroups.
The same argument proves the case of $W(E_7)\cong \C_2\times  \Sp_6(2)$.
While for $W(E_8)$ the index two subgroup $W^+(E_8)$ surjects onto $\Omega_8^+(2)$ with kernel $Z(W(E_8))$ of order $2$.
Thus for $G$ the universal cover of $\Omega_8^+(2)$ with $Z(G)\cong \C_2\times \C_2$, the same argument as used in $E_6$ shows that $G$ and consequently also the groups $\Omega_8^+(2)$, $W^+(E_8)$ and $W(E_8)$ have self-normalising Sylow $2$-subgroups.
%
%
%
\end{proof}
\end{lm}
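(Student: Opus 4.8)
The plan is to reduce to irreducible Weyl groups by the obvious direct-product argument, and then to work type by type using the explicit descriptions in \cite{HumphReflec}. First I would dispatch the classical series. For $W(A_n)\cong\mathfrak{S}_{n+1}$ the statement that $\N_{\mathfrak{S}_{n+1}}(P)=P$ for $P\in\mathrm{Syl}_2(\mathfrak{S}_{n+1})$ is classical; since $W(B_n)=W(C_n)\cong \C_2\wr\mathfrak{S}_n$ has a Sylow 2-subgroup of the form $\C_2^n\rtimes P_0$ with $P_0\in\mathrm{Syl}_2(\mathfrak{S}_n)$, and the normaliser of such a subgroup cannot move the full base group $\C_2^n$ nor (modulo it) normalise more than $\N_{\mathfrak{S}_n}(P_0)=P_0$, self-normalisation is inherited. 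For $W(D_n)$, realised as the index-2 subgroup $\C_2^{n-1}\rtimes\mathfrak{S}_n$ of $W(C_n)$, one checks that a Sylow 2-subgroup $P$ of $W(D_n)$ sits inside a Sylow 2-subgroup $\widetilde P$ of $W(C_n)$ with $[\widetilde P:P]=2$; since $\N_{W(C_n)}(\widetilde P)=\widetilde P$ and $W(D_n)$ is normal in $W(C_n)$, any element normalising $P$ lies in $\N_{W(C_n)}(\widetilde P)\cap W(D_n)=P$. (One must double-check that a Sylow 2-subgroup of $W(D_n)$ really is obtained by intersecting $W(D_n)$ with a Sylow 2-subgroup of $W(C_n)$; this follows because the 2-part of $[W(C_n):W(D_n)]=2$ is recorded correctly by the index computation.)

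Next the exceptional types. The dihedral case $W(G_2)\cong\mathrm{Dih}_{12}$ is immediate since its Sylow 2-subgroup is a Klein four-group which is its own normaliser. For $W(F_4)$ I would use $W(F_4)\cong W(D_4)\rtimes\langle\gamma_1,\gamma_2\rangle$ with $\gamma_1,\gamma_2$ the diagram automorphisms of orders $2$ and $3$; the key point is to exhibit a Sylow 2-subgroup $Q$ of $W(D_4)$ stabilised by both $\gamma_1$ and $\gamma_2$ (using that $\gamma_2$ of order $3$ must fix the full set of Sylow 2-subgroups of $W(D_4)$ since there are three of them, while $\gamma_1$ fixes at least one), set $P=\langle Q,\gamma_1\rangle\in\mathrm{Syl}_2(W(F_4))$, and then compute $\N_{W(F_4)}(Q)=\langle Q,\gamma_1,\gamma_2\rangle$ and observe $P^{\gamma_2}\neq P$ to conclude $\N_{W(F_4)}(P)=P$. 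For $W(E_6),W(E_7),W(E_8)$ the strategy is to pass to the simple-group description: $W(E_6)$ has an index-2 subgroup isomorphic to $\mathrm{SU}_4(2)$ (equivalently $\mathrm{PSp}_4(3)$), $W(E_7)\cong\C_2\times\mathrm{Sp}_6(2)$, and $W(E_8)$ has an index-2 subgroup that is a 2-fold central extension of $\Omega_8^+(2)$. In each of these groups of Lie type defined over $\F_2$, a Sylow 2-subgroup is a maximal unipotent subgroup and the normaliser of a maximal unipotent is a Borel subgroup, which over $\F_2$ equals the maximal unipotent itself; hence the Sylow 2-subgroup is self-normalising there, and self-normalisation then lifts through the (odd-index, respectively index-2 central) extensions in the standard way—if $N\trianglelefteq G$ with $G/N$ a 2-group, or with $N$ a central 2-group, a Sylow 2-subgroup of $G$ is self-normalising provided the induced Sylow 2-subgroup of the relevant quotient is.

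The main obstacle I expect is not any single computation but getting the $E_8$ case exactly right: one must track the central $\C_2$ inside $W^+(E_8)$ and the fact that the universal cover of $\Omega_8^+(2)$ has centre $\C_2\times\C_2$, so that the ``Borel equals unipotent over $\F_2$'' argument has to be combined correctly with two layers of central 2-extensions and one index-2 extension. Concretely, one should argue at the level of the universal cover $G$ of $\Omega_8^+(2)$: a Sylow 2-subgroup of $G$ is self-normalising because its image in $\Omega_8^+(2)$ is (being a maximal unipotent, with Borel $=$ unipotent over $\F_2$) and the kernel is a central 2-group, hence contained in every Sylow 2-subgroup; and then self-normalisation descends to $\Omega_8^+(2)$, lifts to $W^+(E_8)$ through the central $\C_2$, and finally to $W(E_8)$ through the index-2 extension. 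The $F_4$ case is the other delicate spot, purely because it requires the hands-on verification that a single Sylow 2-subgroup of $W(D_4)$ is simultaneously $\gamma_1$- and $\gamma_2$-stable and the explicit determination of $\N_{W(F_4)}(Q)$; everything else is bookkeeping with the standard fact $\N_{\mathfrak{S}_n}(P)=P$.
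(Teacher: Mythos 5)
Your proposal follows essentially the same route as the paper: reduce to irreducible types, handle the classical series via the symmetric group, treat $W(F_4)$ through the decomposition $W(D_4)\rtimes\langle\gamma_1,\gamma_2\rangle$, and identify $W^+(E_6)\cong\SU_4(2)$, $W(E_7)\cong\C_2\times\Sp_6(2)$ and the central extension of $\Omega_8^+(2)$ inside $W(E_8)$, so that the ``Borel equals maximal unipotent over $\F_2$'' observation applies and self-normalisation passes through the central $2$-extensions and the index-$2$ step. Two local steps, however, do not hold as written.

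First, in type $D_n$ you deduce that an element $g\in\N_{W(D_n)}(P)$ lies in $\N_{W(C_n)}(\widetilde P)$. This does not follow: an element normalising $P=\widetilde P\cap W(D_n)$ need not normalise the overgroup $\widetilde P$, since a priori several Sylow $2$-subgroups of $W(C_n)$ could contain $P$. The clean argument, implicit in the paper's phrasing, uses that $\C_2^{n-1}=\ker(\C_2^n\to\C_2)$ is normal in $W(D_n)$ and contained in every Sylow $2$-subgroup $P=\C_2^{n-1}\rtimes P_0$, so that $\N_{W(D_n)}(P)/\C_2^{n-1}$ embeds into $\N_{\mathfrak{S}_n}(P_0)=P_0$. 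Second, in type $F_4$ your justification that $\gamma_2$ stabilises every Sylow $2$-subgroup of $W(D_4)$ --- ``order $3$ acting on three subgroups'' --- is not valid: an order-$3$ automorphism can permute three Sylow $2$-subgroups cyclically (inner automorphisms of order $3$ of $\mathfrak{S}_3$ do exactly this). One needs to produce an actual fixed point first; the paper does so by observing that exactly one Sylow $2$-subgroup of $W(D_4)$ contains the $\gamma_2$-fixed generator $g_2$, whence $\gamma_2$ fixes that subgroup and therefore, having order $3$ and fixing one of three, fixes all of them. With these two points repaired your argument coincides with the paper's.
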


\subsection{Normalisers of Sylow $2$-subgroups}

Let $H$ be a finite group and $Q$ be a Sylow $2$-subgroup of $H$. In this section we consider when $\N_H(Q)=\C_H(Q)Q$, for $H$ a group of Lie type. The following remark will be helpful in answering this question.

\begin{rem}\label{central subgroup}
	
	Let $H$ and $Q$ be as above.
	By Schur-Zassenhaus, we have $\N_H(Q)=Q\rtimes K$ for some subgroup $K$ of $\N_H(Q)$.	In particular, $\N_H(Q)=\C_H(Q)Q$ if and only if $K\lhd \N_H(Q)$.
	
	For any central subgroup $Z\leq \Z(H)$ let $\ol {U}$ denote the image of any subgroup $U$ of $H$ in the quotient $H/Z$.
	Observe that $K$ is the unique Hall $2'$-subgroup of $KZ$ and thus a characteristic subgroup of $KZ$. In particular, $\N_H(K)=\N_H(KZ)$ and similarly $\N_H(Q)=\N_H(QZ)$.
	Thus $\ol{K}$ is a complement to $\ol{Q}$ in $\N_{\ol{H}}(\ol{Q})=\N_H(QZ)/Z$.
	As $KZ\lhd \N_H(Q)$ if and only if $\ol{K}\lhd\N_{\ol{H}}(\ol{Q})$, it follows that $\N_H(Q)=\C_H(Q)Q$ if and only if $\N_{\ol{H}}(\ol{Q})=\C_{\ol{H}}(\ol{Q})\ol{Q}$.
\end{rem}

We use the following theorem by Malle \cite[Theorem 5.19]{MalleHZ} which is based on work by Aschbacher.

\begin{thm}\label{Malle}
	 Let $\bH$ be a simple algebraic group and $F:\bH \to \bH$ a Frobenius endomorphism defining an $\mathbb{F}_q$-structure on $\bH$. Let $d$ be the order of $q$ modulo $4$ and $\mathbf{S}$ a Sylow $d$-torus of $(\bH,F)$. Assume that $\bH^F$ is not isomorphic to $\Sp_{2n}(q)$ with $n \geq 1$ and $q \equiv 3,5 $ $\mathrm{mod}$  $8$. Then there exists a Sylow $2$-subgroup $Q$ of $\bH^F$ with $\N_{\bH^F}(Q) \leq \N_{\bH^F}(\mathbf{S})$.
\end{thm}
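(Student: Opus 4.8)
The plan is to place $Q$ inside the normaliser $\N:=\N_{\bH^F}(\mathbf{S})$ of the Sylow $d$-torus and then to reconstruct $\mathbf{S}$ from $Q$. Since $d\in\{1,2\}$, the centraliser $\bT:=\Cent_\bH(\mathbf{S})$ is a maximal torus of $\bH$: any nontrivial connected semisimple group has a maximal torus of order divisible by $\Phi_1(q)$ and one of order divisible by $\Phi_2(q)$, so a minimal $d$-split Levi with $d\le 2$ cannot have a nontrivial derived subgroup. Consequently $\N=\N_{\bH^F}(\bT)$ is an extension of the relative Weyl group $W_{\bH^F}(\mathbf{S})=\N_{\bH^F}(\bT)/\bT^F$ by $\bT^F$, and for $d\in\{1,2\}$ this relative Weyl group is always a genuine (real) Weyl group -- it is the group $W^F$ of $F$-fixed points in the absolute Weyl group when $d=1$, and when $d=2$ it is the stabiliser of the $\Phi_2$-eigenspace, which one reads off from the Brou\'e--Malle--Michel tables to be a direct product of Weyl groups of types $A_1$, $B_m$, $D_m$, $G_2$, $F_4$, $E_7$, $E_8$. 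In particular \Cref{self normalising} applies to it.

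The first step is to check that $\N$ already contains a full Sylow $2$-subgroup of $\bH^F$, i.e.\ $|\N|_2=|\bH^F|_2$. Writing the order of $\bH^F$ as $q^N\prod_i\Phi_i(q)^{a_i}$ and using that $q$ is odd, the only cyclotomic factors contributing to the $2$-part are $\Phi_1$ and the $\Phi_{2^a}$ with $a\ge 1$; moreover $v_2(\Phi_{2^a}(q))=1$ for $a\ge 2$, while the term of index $d$ (namely $\Phi_1$ if $d=1$ and $\Phi_2$ if $d=2$) carries the unique large valuation, equal to $v_2(q-1)$ resp.\ $v_2(q+1)$, which is $\ge 2$ by the choice of $d$. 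Comparing this with $|\bT^F|_2\cdot|W_{\bH^F}(\mathbf{S})|_2$ -- where the $\Phi_d$-part of $\bT^F$ is precisely $\mathbf{S}^F$, of order $\Phi_d(q)^{a_d}$, and the remaining $2$-adic valuation of $|\bT^F|$ together with $v_2(|W_{\bH^F}(\mathbf{S})|)$ accounts for $\sum_{a\ge 1}a_{2^a}$ (equivalently for $v_2$ of the product of the reflection degrees, via $a_i=\#\{\text{degrees divisible by }i\}$) -- yields equality. This is a finite, type-by-type verification; the only extra point in the twisted groups is that the relevant torus factors are copies of $\GL_1(q^2)$ rather than $\GL_1(q)$.

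Fix now $Q\in\mathrm{Syl}_2(\N)=\mathrm{Syl}_2(\bH^F)$, and let $Q_\bT:=Q\cap\bT^F\in\mathrm{Syl}_2(\bT^F)$, a normal abelian subgroup of $Q$ with $Q/Q_\bT$ isomorphic to a Sylow $2$-subgroup of $W_{\bH^F}(\mathbf{S})$. The decisive step is to show that $\mathbf{S}^F_2$ (the $2$-part of $\mathbf{S}^F$) is \emph{characteristic} in $Q$, and that $\Cent_\bH(\mathbf{S}^F_2)^\circ=\bT$. Granting both, any $g\in\N_{\bH^F}(Q)$ normalises $\mathbf{S}^F_2$, hence $\bT$, hence its $\Phi_d$-part $\mathbf{S}$, so that $\N_{\bH^F}(Q)\le\N_{\bH^F}(\mathbf{S})=\N$, as wanted. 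The condition $\Cent_\bH(\mathbf{S}^F_2)^\circ=\bT$ holds as soon as each cyclic constituent of $\mathbf{S}^F$ has order divisible by $4$, i.e.\ precisely when $v_2(q-1)\ge 2$ (if $d=1$) resp.\ $v_2(q+1)\ge 2$ (if $d=2$) -- which is exactly our running hypothesis. For ``$\mathbf{S}^F_2$ characteristic in $Q$'' one analyses the structure of $Q$: the normal subgroup $Q_\bT$ is a wreath-type product of the Sylow $2$-subgroups of the rank-one (or rank-two twisted) constituents of $\bT^F$, inside which $\mathbf{S}^F_2$ is a distinguished characteristic subgroup (for instance $\{x\in Q_\bT:x^4=1\}$ when $Q_\bT$ has exponent $\ge 4$, or the unique maximal cyclic/homocyclic subgroup of the relevant type), and one checks that this is preserved by all of $\Aut(Q)$, using \Cref{self normalising} to control the way $\N_{\bH^F}(Q)$ can act on the quotient $Q/Q_\bT$.

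The excluded family is exactly where this last step breaks. If $\bH^F\cong\Sp_{2n}(q)$ with $q\equiv 3,5\pmod 8$, then $v_2(q^2-1)=3$, the rank-one constituents of $\bT^F$ are maximal tori $\C_4$ of subgroups $\SL_2(q)\le\Sp_{2n}(q)$, and these sit inside generalised quaternion groups $Q_8$ in which $\C_4$ is \emph{not} characteristic: $\Aut(Q_8)$ is transitive on the three cyclic subgroups of order $4$, and accordingly $\Cent_{\SL_2(q)}(\C_4)=\C_4$ while $\N_{\SL_2(q)}(Q_8)\not\le\N_{\SL_2(q)}(\C_4)$. One then sees that no Sylow $2$-subgroup of $\Sp_{2n}(q)$ has its normaliser inside $\N_{\bH^F}(\mathbf{S})$, whereas for every other group (where each $\SL_2$-constituent has Sylow $2$-subgroup generalised quaternion of order $\ge 16$, or the torus is otherwise $2$-adically large) the reconstruction goes through. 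I expect the main obstacle to be precisely this reconstruction step -- identifying, type by type, the characteristic subgroup of $Q$ that recovers $\mathbf{S}$, and isolating the symplectic exception -- rather than the purely numerical order computation.
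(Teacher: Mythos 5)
First, a point of comparison: the paper does not prove this statement at all --- it is quoted as a known result from Malle \cite[Theorem 5.19]{MalleHZ}, whose proof in turn rests on Aschbacher's structural analysis of Sylow $2$-subgroups of groups of Lie type in odd characteristic. So there is no internal argument to measure yours against; what you have written is an independent strategy, and its overall architecture is the right one: first show $|\N_{\bH^F}(\bS)|_2=|\bH^F|_2$ so that $Q$ may be taken inside $\N_{\bH^F}(\bS)$, then recover $\bT$ (hence $\bS$, canonically) from a characteristic subgroup of $Q$, and locate the failure for $\Sp_{2n}(q)$ with $q\equiv 3,5\pmod 8$ in the fact that $\C_4$ is not characteristic in $Q_8$. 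Your diagnosis of the excluded family is correct, and the order count in the first step is standard generic Sylow theory.

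The genuine gap is the decisive second step. That $\bS^F_2$ is characteristic in $Q$ is asserted, not proven, and the one concrete candidate you offer, $\{x\in Q_\bT : x^4=1\}$, is not $\bS^F_2$ in general: $\bS^F_2$ is homocyclic of exponent $(q-\epsilon)_2$, which under the running hypothesis is $\geq 4$ but typically $>4$, so your set is the proper subgroup $\Omega_2(Q_\bT)$ whenever $(q-\epsilon)_2\geq 8$. More seriously, in the twisted groups and in types $A_n$, $D_{2n+1}$, $E_6$ with $d=2$ one has $\bS^F\subsetneq\bT^F$, so $Q_\bT$ is not homocyclic; identifying $\bS^F_2$ inside it, and then verifying invariance under all of $\Aut(Q)$ (you must also first know that $Q_\bT$ itself, or at least $\bS^F_2$, cannot be moved by automorphisms mixing $Q_\bT$ with the complement $Q/Q_\bT$), is exactly the case analysis the theorem hides, and you have not carried it out. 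The auxiliary claim $\Cent_\bH(\bS^F_2)^\circ=\bT$ also needs the (true but unstated) observation that no root restricts to a character of $\bS$ divisible by $4$. In short: correct strategy and correct identification of the obstruction, but the proof of the key characteristicity claim --- which is where Aschbacher's work enters in the actual literature --- is missing.
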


We can now answer the question posed at the beginning of this section. Note that a similar result to the following corollary was obtained in \cite[Theorem 1]{Kon05}.

\begin{cor}\label{star property}
	Keep the assumption of Theorem \ref{Malle} and let $Q$ be a Sylow $2$-subgroup of $H:=\bH^F$. Then $\N_H(Q)=\C_H(Q) Q$.
	Moreover, for $\bH \hookrightarrow \tilde{\bH}$ a regular embedding and $\wt{Q}$ a Sylow $2$-subgroup of $\tilde{H}:=\tilde{\bH}^F$ with $Q=\wt{Q}\cap H$, then $\N_{\tilde{H}}(Q)=\N_{\tilde{H}}(\tilde{Q})=\C_{\tilde{H}}(\tilde{Q}) \tilde{Q}$.
\end{cor}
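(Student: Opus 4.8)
The plan is to prove Corollary~\ref{star property} in two steps, treating the statement about $H$ first and then deducing the statement about $\tilde H$.

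\textbf{Step 1: The equality $\N_H(Q)=\C_H(Q)Q$.} By Theorem~\ref{Malle} there is a Sylow $2$-subgroup $Q$ of $H=\bH^F$ with $\N_H(Q)\leq \N_H(\bS)$, where $\bS$ is a Sylow $d$-torus of $(\bH,F)$ and $d$ is the order of $q$ modulo $4$ (so $d\in\{1,2\}$). Thus $\N_H(Q)=\N_{\N_H(\bS)}(Q)$, and it suffices to understand the structure of the relative Weyl group $\N_H(\bS)/\C_H(\bS)$, which is a subquotient of $W:=\bW(\bS)=\N_{\bH}(\bS)^F/\C_{\bH}(\bS)^F$. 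Since $d\in\{1,2\}$ the relevant group $W$ is (up to the precise isogeny type) essentially the full Weyl group of $\bH$, and $\C_H(\bS)$ is abelian of odd order times a $2$-group coming from the torus --- more precisely $\C_H(\bS)=\bS^F$ contains a Sylow $2$-subgroup of itself which, together with a preimage in $\N_H(\bS)$ of a Sylow $2$-subgroup of $W$, builds $Q$. Now apply Lemma~\ref{self normalising}: every Sylow $2$-subgroup of a Weyl group is self-normalising. Hence, writing $\bar Q$ for the image of $Q$ in $W$, we get $\N_W(\bar Q)=\bar Q$, which forces $\N_H(Q)\cdot\C_H(\bS)/\C_H(\bS)$ to normalise $\bar Q$, so $\N_H(Q)\subseteq \C_H(\bS)\cdot(\text{preimage of }\bar Q)$. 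Since $\C_H(\bS)$ is abelian it centralises the $2$-part of $Q$ lying inside $\C_H(\bS)$, and one concludes that $K$ (a $2$-complement in $\N_H(Q)$ as in Remark~\ref{central subgroup}) is normal in $\N_H(Q)$; equivalently $\N_H(Q)=\C_H(Q)Q$. The passage between the isogeny type of $\bH$ and its simply connected/adjoint versions, where $\C_H(\bS)$ or $W$ may change by a central or quotient $2$-group, is handled by Remark~\ref{central subgroup}, which says the property $\N_H(Q)=\C_H(Q)Q$ is insensitive to quotients by central subgroups.

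\textbf{Step 2: Passing to the regular embedding.} Let $\bH\hookrightarrow\tilde\bH$ be a regular embedding, so $\tilde\bH/\bH$ is a torus and $\tilde H/H\cong \tilde\bH^F/\bH^F$ is abelian of order prime to... (not necessarily $2'$, but) in any case $\tilde H=H\Zent(\tilde H)$ up to finite index and $[\tilde H,\tilde H]\subseteq H$. First I would check $\N_{\tilde H}(Q)=\N_{\tilde H}(\tilde Q)$: since $Q=\tilde Q\cap H$ and $\tilde Q$ is a Sylow $2$-subgroup of $\tilde H$ containing $Q$, any element normalising $\tilde Q$ normalises $\tilde Q\cap H=Q$; conversely an element $g\in\N_{\tilde H}(Q)$ normalises $\C_{\tilde H}(Q)$, and since $\tilde Q/Q$ is central in $\tilde H/H$ one shows $\tilde Q\subseteq \C_{\tilde H}(Q)Q$ (using that $\tilde H/H$ is abelian, so $\tilde Q$ acts trivially by conjugation modulo $H$ on $Q\le H$, hence $[\tilde Q,Q]\le Q$ and $\tilde Q$ normalises $Q$, giving $\tilde Q\le \N_{\tilde H}(Q)$ with $\tilde Q/Q$ the Sylow $2$-subgroup of $\N_{\tilde H}(Q)/Q$), so $\tilde Q$ is the unique Sylow $2$-subgroup of $\N_{\tilde H}(Q)$ up to conjugacy and is in fact characteristic-enough that $\N_{\tilde H}(Q)$ normalises it; this gives $\N_{\tilde H}(Q)\subseteq\N_{\tilde H}(\tilde Q)$ and hence equality. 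For the final equality $\N_{\tilde H}(\tilde Q)=\C_{\tilde H}(\tilde Q)\tilde Q$, I would argue that $\N_{\tilde H}(\tilde Q)=\N_{\tilde H}(Q)\subseteq \N_{\tilde H}(\C_H(Q)Q)$, and use $\N_H(Q)=\C_H(Q)Q$ from Step~1 together with the fact that $\tilde H=H\C_{\tilde H}(\tilde Q)$ --- which follows because $\tilde H/H$ is abelian and $\tilde Q/Q$ generates it at the $2$-level while the odd part is covered by $\Zent(\tilde H)$ --- to conclude. Concretely: $\N_{\tilde H}(\tilde Q)=\C_{\tilde H}(\tilde Q)\N_H(\tilde Q)$ by a Frattini-type argument once we know $\tilde H=H\,\C_{\tilde H}(\tilde Q)$, and $\N_H(\tilde Q)\subseteq \N_H(Q)=\C_H(Q)Q\subseteq\C_{\tilde H}(\tilde Q)\tilde Q$, since any $x\in\N_H(Q)$ writes as $x=cq$ with $c\in\C_H(Q)$ centralising also the central-mod-$H$ part $\tilde Q$, hence $c\in\C_{\tilde H}(\tilde Q)$.

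\textbf{Main obstacle.} The genuinely delicate point is Step~1's reduction of $\N_H(Q)$ to the Weyl-group computation: one must pin down exactly how a Sylow $2$-subgroup $Q$ of $\bH^F$ sits inside $\N_H(\bS)$ --- namely that $Q\cap\C_H(\bS)=Q\cap\bS^F$ is precisely a Sylow $2$-subgroup of the abelian group $\C_H(\bS)$, and that the image $\bar Q$ in the relative Weyl group is a full Sylow $2$-subgroup of that (essentially Coxeter) group --- and then invoke Lemma~\ref{self normalising} correctly, being careful about the isogeny type and applying Remark~\ref{central subgroup} to reduce to a convenient version of $\bH$. The case distinctions in Theorem~\ref{Malle} (excluding $\Sp_{2n}(q)$ with $q\equiv 3,5\bmod 8$) and the two possibilities $d=1,2$ need to be carried along, but in both cases $\C_H(\bS)$ is abelian and the argument is uniform; the $\tilde H$-part of Step~2 is then essentially formal.
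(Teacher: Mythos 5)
Your Step 1 follows essentially the same route as the paper: by Theorem \ref{Malle} the complement $K$ of $Q$ in $\N_H(Q)$ lies in $\N_H(\bT)$ with $\bT=\C_\bG(\bS)$, the image of $Q$ in the relative Weyl group is a self-normalising Sylow $2$-subgroup by Lemma \ref{self normalising}, hence $K\leq Q\bT^F=\bT^F_{2'}\rtimes Q$ and, being a $2'$-group, $K\leq\bT^F_{2'}$. One small point of precision: abelianness of $\bT^F$ only gives that $K$ centralises $Q\cap\bT^F$; to get $K\leq\C_H(Q)$ you need the cleaner computation $[K,Q]\leq Q\cap\bT^F_{2'}=1$ (using that $K$ normalises $Q$ and $Q$ normalises $\bT^F_{2'}$). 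Also note $\C_H(\bS)=\bT^F$, not $\bS^F$.

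Step 2, however, has a genuine gap at its central point. The inclusion $\N_{\tilde H}(Q)\subseteq\N_{\tilde H}(\tilde Q)$ does \emph{not} follow from $\tilde Q$ being "the unique Sylow $2$-subgroup of $\N_{\tilde H}(Q)$ up to conjugacy" --- that is true of any Sylow subgroup and never implies normality. What has to be ruled out is that several Sylow $2$-subgroups of $\tilde H$ meet $H$ in $Q$ and are permuted nontrivially by $\N_{\tilde H}(Q)$; equivalently, writing $\N_H(Q)=Q\times K$, one must show $[K,\tilde Q]=1$, and $K$ centralises $Q$ but a priori not $\tilde Q$. This is exactly where the paper does real work: it first notes all such $\tilde Q$ are $\N_{\tilde H}(Q)$-conjugate, then splits into cases --- type $A$ is handled by citing Kondrat'ev's theorem, and otherwise $\tilde H/H\Z(\tilde H)$ is either a $2'$-group (so $\tilde Q=Q\Z(\tilde H)_2$ is unique over $Q$) or a $2$-group (so $\tilde Q=\tilde\bT^F_2Q$ and $[K,\tilde\bT^F]=1$ gives the claim directly). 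Your fallback identity $\tilde H=H\,\C_{\tilde H}(\tilde Q)$ is likewise unjustified and is essentially equivalent to what needs proving (note $\tilde H\neq H\Z(\tilde H)$ in general, e.g.\ in type $A$). Finally, for the equality $\N_{\tilde H}(\tilde Q)=\C_{\tilde H}(\tilde Q)\tilde Q$ itself the paper has a cleaner mechanism you did not use: $\tilde H/\Z(\tilde H)^F$ embeds into $\bH_{\mathrm{ad}}^F$, which always satisfies the hypothesis of Theorem \ref{Malle}, so part one applies there and Remark \ref{central subgroup} transports the property back through the central quotient.
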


\begin{proof}
	As in Remark \ref{central subgroup}, take $K$ a complement to $Q$ in $\N_H(Q)$.
	According to Theorem \ref{Malle}, $K\leq \N_H(\bT)$, where $\bT=\C_{\bG}(\bS)$ is a maximal torus of $\bH$, see \cite[Lemma 3.17]{Cabanesgroup}. In particular, $K$ normalises $Q\bT^F$.
	As $\mathbf{S}$ is $d$-split with $d \in \{1,2\}$, the group $W$, where $W:=\N_{\mathbf{H}^F}({\bf T})/{\mathbf{T}^F}$, is again isomorphic to a Weyl group (use \cite[page 121]{MarcBook} and \cite[Corollary B.23]{MT}).
	Hence $Q\bT^F/\bT^F$, which is a Sylow $2$-subgroup of $W$, is self-normalising in $W$ by Lemma \ref{self normalising}.
	Thus $K\leq Q\bT^F=\bT^F_{2'}\rtimes Q$.
	As $K$ is a $2'$-group, then $K\leq \bT^F_{2'}$ and so $[K,Q]\leq Q\cap \bT^F_{2'}=1$.
	In other words $K\leq \C_H(Q)$.
	This proves the first statement.
	
	Next observe that $\tilde{\bH}^F / \Z(\tilde{\bH})^F \cong \bH_{\mathrm{ad}}^F$ and the assumption of Theorem \ref{Malle} is always satisfied for $\bH_{\mathrm{ad}}^F$. 
	Thus by applying Remark~\ref{central subgroup} it follows that $\N_{\tilde{H}}(\tilde{Q})=\C_{\tilde{H}}(\tilde{Q}) \tilde{Q}$. 
	Therefore it remains to show that $\N_{\tilde{H}}(Q)=\N_{\tilde{H}}(\tilde{Q})$.
	As any two Sylow $2$-subgroups above $Q$ must be conjugate by an element of $\N_{\wt{H}}(Q)$, it suffices to consider a fixed $\wt{Q}\in{\rm Syl}_2(\wt{H})$ lying above $Q$.

	For groups of type $A$ this follows from \cite[Theorem 1]{Kon05}. 
	In the remaining cases $\tilde{H}/H\Z(\tilde{H})$ is either a $2$- or a $2'$-group. Note that if $\wt{H}/H\Z(\wt{H})$ is a $2'$-group, then $\wt{Q}=Q\Z(\wt{H})_2$ is the unique Sylow $2$-subgroup of $\tilde{H}$ containing $Q$ and so $\N_{\wt{H}}(\wt{Q})=\N_{\wt{H}}(Q)$. Thus assume that $\wt{H}/H\Z(\wt{H})$ is a $2$-group.
	For $\wt{\bf T}:=\bf T \Z(\tilde{\bG})$ a maximal torus of $\wt{\bf H}$, we have $\wt{Q}:=\wt{\bT}^F_{2}Q$ is a Sylow $2$-subgroup of $\wt{H}=H\Z(\wt{H})\wt{Q}$ and $[K,\wt{\bT}^F]=1$.
	Thus $\N_{\wt{H}}(Q)=\N_H(Q)\Z(\wt{H})\wt{Q}=K\Z(\wt{H})\wt{Q}\leq \C_{\wt{H}}(\wt{Q})\wt{Q}$.
\end{proof}

\subsection{Groups of Lie type}\label{setup}

The following section is used to introduce the setup which will be in place for the remainder of this article.
Let $\bG$ be a simple algebraic group of simply connected type defined over an algebraic closure of $\mathbb{F}_p$ for some odd prime $p$. We adopt the notation of \cite[Section 2.B]{MS}. In particular, $F_0: \bG \to \bG$ denotes a field endomorphism inducing an $\mathbb{F}_p$-structure on $\bG$ and for every symmetry of the Dynkin diagram associated to $\bG$ we have a graph automorphism $\gamma: \bG \to \bG$. We consider a Frobenius endomorphism $F:=F_0^m \gamma$ with $\gamma$ a (possibly trivial) graph automorphism of $\bG$ such that $F$ defines an $\mathbb{F}_q$-structure on $\bG$, where $q=p^m$. In addition, we let $\bG \hookrightarrow \tilde{\bG}$ be the regular embedding constructed in \cite[Section 2.B]{MS}.

 We will also assume until Section \ref{rest} that $\bG^F$ is not of type $C_n(q)$, $n \geq 1$, or ${}^3 D_4(q)$ whenever $q \not\equiv  1 \, \mathrm{mod}\, 8$.

Denote by $d$ the order of $q$ modulo $4$. We let $\bT$ be a maximally split torus of $\bG$ with corresponding Weyl group $\mathbf{W}$. We set $\bV:=\langle n_\alpha(1) \mid \alpha \in \Phi\rangle \subset \N_\bG(\bT)$, and $\bH:= \bV \cap \bT$. We define $v:=1$ if $d=1$ and $v:= \tilde{w_0}$ if $d=2$, where $\tilde{w_0}$ is the canonical representative in $\bV$ of the longest element $w_0 \in \mathbf{W}$ as defined in \cite[Section 3.A]{MS}. We recall \cite[Notation 3.3]{MS}:

\begin{notation}   \label{not:3.4}
	As before let $F:=F_0^m \gamma$ be a fixed Frobenius endomorphism of $\bG$. Let $E_1$ be the subgroup of $\Aut(\bG)$ generated
	by the graph automorphisms which commute with $\gamma$. Set $e:=\mathrm{o}(\gamma) \mathrm{exp}(E_1) \mathrm{o}(v)$. Let $E:= \C_{em} \times E_1$ act on
	${\tilde \bG}^{F_0^{2em}}$ such that the first summand $\C_{2em}$ of $E$ acts
	by $\langle F_0 \rangle$ and the second by the group generated by graph automorphisms.
	Note that this action is faithful. Let $\wh F_0, \wh \gamma, \wh F \in E$ be
	the elements that act on ${\tilde \bG}^{F^{2em}_0}$ by $F_0$, $\gamma$ and $F$,
	respectively.
\end{notation}

\begin{lm}\label{Sylow}
	The torus $\bT$ contains a Sylow $d$-torus $\bS$ of $(\bG,vF)$. Moreover, $\bT=\C_{\bG}(\bS)$ and $N=T V$, where $N:=\N_\bG(\bS)^{vF}$, $T:=\bT^{vF}$ and $V:=\bV^{vF}$.
\end{lm}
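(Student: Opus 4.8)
\emph{Strategy.} The plan is to realise $\bS$ explicitly inside $\bT$ as the subtorus cut out by the semisimple part of $vF$, to deduce $\bT=\C_\bG(\bS)$ from the fact that on roots $vF$ twists only by a diagram automorphism preserving positivity, and then to pass to $vF$-fixed points, whereupon $N=TV$ reduces to the surjectivity of $\bV^{vF}\to\mathbf{W}^{vF}$. Since $\bT$ is maximally split, $vF$ acts on $Y:=Y(\bT)$ as $q\psi$ with $\psi$ of finite order: $\psi=\phi$ if $d=1$, where $\phi$ is the diagram automorphism induced by $\gamma$ and stabilising the base of simple roots; and $\psi=w_0\phi$ if $d=2$, since conjugation by $v=\tilde{w_0}$ realises $w_0$ on $\bT$. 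Put $\theta:=\phi$ if $d=1$ and $\theta:=-w_0\phi$ if $d=2$. As $-w_0$ is a diagram automorphism, $\theta$ permutes the simple roots in both cases, hence preserves the positive system, and $\theta=\psi$, resp.\ $\theta=-\psi$. Let $\bS\le\bT$ be the subtorus with $Y(\bS)=Y^\theta:=\{y\in Y:\theta(y)=y\}$, a saturated sublattice; then $\bS$ is a genuine subtorus, it is $vF$-stable because $\psi(Y^\theta)=Y^\theta$, and $vF$ acts on $Y(\bS)$ as the scalar $q$ if $d=1$ and as $-q$ if $d=2$, so $\bS$ is a $d$-torus of $(\bG,vF)$.

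\emph{$\bT=\C_\bG(\bS)$ and $\bS$ is Sylow.} A root $\alpha\in X(\bT)$ restricts trivially to $\bS$ exactly when $\alpha\perp Y^\theta$, i.e.\ when the $\theta$-orbit sum $\sum_j\theta^j(\alpha)$ vanishes; but $\theta$ preserves positivity, so the roots $\theta^j(\alpha)$ all lie in the pointed cone of positive, resp.\ negative, roots and their sum is nonzero. Hence no root is trivial on $\bS$, so $\C_\bG(\bS)=\bT$. Consequently any $d$-torus $\bS'$ of $\bG$ containing $\bS$ lies in $\C_\bG(\bS)=\bT$, and being a $d$-torus forces $vF$ to act as a scalar on $Y(\bS')$, hence $\psi|_{Y(\bS')}=\pm\mathrm{id}$ and $Y(\bS')\subseteq Y^\theta=Y(\bS)$, so $\bS'=\bS$; thus $\bS$ is a maximal, i.e.\ Sylow, $d$-torus of $(\bG,vF)$.

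\emph{The normaliser and its fixed points.} From $\C_\bG(\bS)=\bT$ one gets $\N_\bG(\bS)\le\N_\bG(\bT)=\bT\bV$, since $g\in\N_\bG(\bS)$ implies $g\bT g^{-1}=\C_\bG(g\bS g^{-1})=\bT$. A representative in $\bV$ normalises $\bS\le\bT$ iff its image in $\mathbf{W}$ does, so $\N_\bG(\bS)=\bT\,\bV_d$, where $\bV_d$ is the preimage in $\bV$ of $\mathbf{W}_d:=\{w\in\mathbf{W}:w\,Y(\bS)=Y(\bS)\}$. Now $vF$ stabilises $\bT$, $\bH=\bV\cap\bT$ and $\bV$ — the latter because $F_0$ fixes every $n_\alpha(1)$, $\gamma$ permutes them, and $v\in\bV$ — hence also $\bV_d$ and $\N_\bG(\bS)$; and $vF$ acts on $\mathbf{W}$ by conjugation by $\psi$ (the scalar $q$ cancelling), so $\mathbf{W}^{vF}=\C_\mathbf{W}(\psi)=\C_\mathbf{W}(\theta)$. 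Every element commuting with $\theta$ preserves $Y^\theta=Y(\bS)$, whence $\mathbf{W}^{vF}\subseteq\mathbf{W}_d$; therefore $V=\bV^{vF}$ maps into $\mathbf{W}^{vF}\subseteq\mathbf{W}_d$, so $V\subseteq\bV_d\subseteq\N_\bG(\bS)$, and with $T=\bT^{vF}=\C_\bG(\bS)^{vF}\subseteq N$ this gives $TV\subseteq N$. By the Lang--Steinberg theorem $\N_\bG(\bT)^{vF}\twoheadrightarrow\mathbf{W}^{vF}$ with kernel $T$; since $\mathbf{W}^{vF}\subseteq\mathbf{W}_d$ this even shows $N=\N_\bG(\bS)^{vF}=\N_\bG(\bT)^{vF}$ and $|N|=|T|\cdot|\mathbf{W}^{vF}|$.

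\emph{Conclusion, and the main obstacle.} Since $T\cap V=\bH^{vF}$ is the kernel of $V\to\mathbf{W}$, we have $|TV|=|T|\cdot|\operatorname{im}(V\to\mathbf{W})|$; together with $TV\subseteq N$ and $|N|=|T|\cdot|\mathbf{W}^{vF}|$, the equality $N=TV$ is equivalent to $\bV^{vF}$ surjecting onto $\mathbf{W}^{vF}=\C_\mathbf{W}(\theta)$. This is the heart of the proof and the step I expect to be the main obstacle. Since the set of $w\in\mathbf{W}^{vF}$ admitting a $vF$-fixed lift in $\bV$ is a subgroup, it suffices to produce such a lift for each standard generator $w_\mathcal{O}$ of $\C_\mathbf{W}(\theta)$, one per $\theta$-orbit $\mathcal{O}$ of simple roots. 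For $d=1$ this is immediate: $vF$ carries the canonical representative $n_w$ to $n_{\psi w\psi^{-1}}$ — using $vF(n_{\alpha_i}(1))=n_{\alpha_{\phi(i)}}(1)$ and invariance of length under $\phi$ — so $n_w$ is $vF$-fixed whenever $w$ is. For $d=2$ the same scheme applies, but now one must track the sign in the identity $n_{\tilde{w_0}}\,n_\alpha(1)\,n_{\tilde{w_0}}^{-1}=n_{w_0(\alpha)}(\pm1)$ and use the braid relations in $\bV$ for a $\theta$-orbit consisting of two adjacent simple roots; equivalently, one must verify that the obstruction in $\operatorname{coker}(vF-1\colon\bH\to\bH)$ to lifting each $w_\mathcal{O}$ to a $vF$-fixed element of $\bV$ vanishes. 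This sign bookkeeping is the only genuinely delicate point, and I would carry it out following the analysis of the extended Weyl group in \cite[Section 3.A]{MS}. Granting the surjectivity, $T\le TV\le N$ with $(TV)/T\twoheadrightarrow\mathbf{W}^{vF}=N/T$, forcing $TV=N$.
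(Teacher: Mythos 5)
Your proposal is essentially correct and is considerably more detailed than the paper itself, whose ``proof'' of Lemma~\ref{Sylow} consists solely of citations to \cite[Lemma 3.2]{MS} and \cite[Section 5.1]{Equivariantcharacter}. Your route is the same as that of the cited sources: realising $\bS$ via $Y(\bS)=Y^\theta$ with $\theta=\phi$ resp.\ $-w_0\phi$, observing that no root has vanishing $\theta$-orbit sum (whence $\C_\bG(\bS)=\bT$ and $\bS$ is maximal, hence Sylow, among $d$-tori), and reducing $N=TV$ via Lang--Steinberg and the count $|N|=|T|\cdot|\bW^{vF}|$ to the surjectivity of $\bV^{vF}\to\bW^{vF}$ are all sound and correctly executed. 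The one step you do not actually carry out --- producing $vF$-fixed lifts in $\bV$ of the Steinberg generators $w_{\mathcal O}$ of $\C_\bW(\theta)$ when $d=2$, i.e.\ checking that the sign obstruction in $\bH$ vanishes --- is precisely the content of \cite[Section 3.A]{MS} and of Sp\"ath's earlier analysis of the extended Weyl group, so your deferral points at exactly the external input the paper itself relies on. Be aware, though, that this is the only genuinely nontrivial computation in the whole statement, so a fully self-contained proof would still require doing that bookkeeping rather than citing it.
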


\begin{proof}
	See \cite[Lemma 3.2]{MS} and \cite[Section 5.1]{Equivariantcharacter}.
\end{proof}

	Note that $E$ stabilises $N$, $T$, $V$ and hence $H:=\bH^{vF}$.
In what follows both the groups $\bG^F$ and $\bG^{vF}$ will be considered. 
Therefore, in addition to the notation in Malle--Späth \cite{MS} the objects from $G_0:=\bG^F$ will be denoted with a subscript $0$, e.g. $T_0:=\bT^{F}$, $N_0:=\N_{G_0}(\bS)$ and $W_0:=\bW^{F}$. The following lemma provides a tool to pass between the groups $G_0=\bG^{F}$ and $G:=\bG^{vF}$ and compare them:

\begin{lm}\label{TwistIsom}
	Let $g \in \bG$ such that $gF(g)^{-1}=v$. Then the map
	$$\iota: \tilde\bG^{F_q^{2e}} \rtimes E \to \tilde\bG^{F_q^{2e}} \rtimes E, \, x \mapsto x^{g^{-1}}$$
	is an isomorphism which maps $\bG^F \rtimes E$ onto $\bG^{vF} \rtimes E$.
\end{lm}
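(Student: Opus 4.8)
The plan is to produce $g$ via Lang's theorem, to note that the defining relation automatically places $g$ inside $\wt{\bG}^{F_q^{2e}}$ so that $\iota$ becomes conjugation by an element of the group $\wt{\bG}^{F_q^{2e}}\rtimes E$ itself, and then to deduce the images of $\bG^{F}\rtimes E$ and $\bG^{vF}\rtimes E$ from $gF(g)^{-1}=v$. The only subtle point will be verifying that $g$ is $F_q^{2e}$-fixed; everything else is bookkeeping.

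First I would invoke Lang's theorem for the connected group $\bG$: the map $x\mapsto xF(x)^{-1}$ on $\bG$ is surjective, so a $g$ with $gF(g)^{-1}=v$ exists. This relation rewrites as $vF(g)=g$ (equivalently $F(g)^{-1}v^{-1}=g^{-1}$, and $F(g)=v^{-1}g$), and substituting it into $vF(gyg^{-1})v^{-1}=vF(g)\,F(y)\,F(g)^{-1}v^{-1}$ yields $vF(gyg^{-1})v^{-1}=gF(y)g^{-1}$ for all $y\in\wt{\bG}$. In other words, $y\mapsto y^{g^{-1}}=gyg^{-1}$ intertwines $F$ with the twisted Frobenius endomorphism $vF\colon y\mapsto vF(y)v^{-1}$. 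Taking fixed points, this conjugation sends $\wt{\bG}^{F}$ onto $\wt{\bG}^{vF}$ and $\bG^{F}$ onto $\bG^{vF}$.

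Next I would show $g\in\wt{\bG}^{F_q^{2e}}$. Since $\mathrm{o}(\gamma)\mid e$ we have $F^{2e}=F_0^{2em}\gamma^{2e}=F_0^{2em}=F_q^{2e}$, so it suffices that $F^{2e}(g)=g$. Iterating $g=vF(g)$ gives $g=\bigl(\prod_{i=0}^{2e-1}F^{i}(v)\bigr)F^{2e}(g)$, so I must show the ``norm'' $\prod_{i=0}^{2e-1}F^{i}(v)$ is trivial. For $d=1$ this is clear ($v=1$). For $d=2$, where $v=\wt{w_0}$: the map $F^{e}=F_0^{em}$ fixes $\bV$ pointwise (the generators $n_\alpha(1)$ being $F_0$-fixed), so the norm equals $M^{2}$ with $M:=\prod_{i=0}^{e-1}F^{i}(v)\in\bV$; the image of $M$ in $\bV/\bH\cong\bW$ is $w_0^{e}$, which is trivial because $e$ is even (as $\mathrm{o}(v)\in\{2,4\}$) and each $F^{i}(v)$ represents $w_0$ (graph automorphisms fixing the longest element); hence $M\in\bH=\bV\cap\bT$, which has exponent dividing $2$, so $M^{2}=1$. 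This last chain of elementary facts about $\bV$, $\bH$ and the parity of $e$ is, I expect, the main (and essentially the only) obstacle; the rest is formal.

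Finally I would assemble everything. Each generator of $E$ commutes with $F_0^{2em}$, so $\wt{\bG}^{F_q^{2e}}$ is $E$-stable and $\wt{\bG}^{F_q^{2e}}\rtimes E$ is a group containing $g\in\bG^{F_q^{2e}}$; thus $\iota$ is the inner automorphism of $\wt{\bG}^{F_q^{2e}}\rtimes E$ given by conjugation by $g$, hence an isomorphism. As $g\in\bG^{F_q^{2e}}$, the second step gives $\iota(\bG^{F})=\bG^{vF}$. For the factor $E$: for $\sigma\in E$ one has $\iota(\sigma)=\bigl(g\,\sigma(g)^{-1}\bigr)\sigma$, and using that $\sigma$ commutes with $F$ and fixes $v$ — the latter since $F_0$ fixes $\bV$ pointwise and the canonical representative $\wt{w_0}$ is invariant under the graph automorphisms in $E_1$, see \cite[Section~3.A]{MS} — a short computation with $F(g)=v^{-1}g$ gives $vF\bigl(g\sigma(g)^{-1}\bigr)v^{-1}=g\sigma(g)^{-1}$, so $g\sigma(g)^{-1}\in\bG^{vF}$. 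Hence $\iota(\bG^{F}\sigma)=\bG^{vF}\sigma$ for all $\sigma$, and therefore $\iota(\bG^{F}\rtimes E)=\bG^{vF}\rtimes E$, as claimed.
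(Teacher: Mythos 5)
Your proof is correct and is essentially the argument that the paper outsources to the cited reference (Cabanes–Späth, Prop.~5.3): Lang's theorem, the norm computation $\prod_{i=0}^{2e-1}F^i(v)=1$ showing $g\in\bG^{F_q^{2e}}$ so that $\iota$ is an inner automorphism of $\tilde\bG^{F_q^{2e}}\rtimes E$, the intertwining relation $vF(gyg^{-1})v^{-1}=gF(y)g^{-1}$, and the verification that $g\sigma(g)^{-1}\in\bG^{vF}$ for $\sigma\in E$. All the supporting facts you use ($F_0$ fixing $\bV$ pointwise, graph automorphisms fixing $\tilde w_0$, $\bH$ of exponent $2$, $e$ even when $d=2$) are correct, so there is nothing to add.
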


\begin{proof}
	See the proof of \cite[Proposition 5.3]{Equivariantcharacter}. 
\end{proof}

Since the image of $\hat{F}$ under $\iota$ is $v \hat{F}$ we obtain an isomorphism $(\bG^F \rtimes E)/\langle \hat F \rangle \cong (\bG^{vF} \rtimes E)/ \langle v \hat F \rangle$. From Theorem \ref{Malle} we are now able to explicitly construct a Sylow $2$-subgroup of $G:=\bG^{vF}$. First, we let $T_2$ and $V_2$ be a Sylow $2$-subgroup of $T$ and $V$ respectively. We define $P:=T_2 V_2$ which forms a Sylow $2$-subgroup of $G$ and conclude that $Q:=\iota^{-1}(P)$ is a Sylow $2$-subgroup of $G_0$. In the next section, we show that $P$ can be chosen to be $E$-stable.

\subsection{Automorphisms}

\begin{lm}\label{longest element}
Let $W$ be a Weyl group of irreducible type.
If $W$ is of type $A_n$ ($n \geq 2$), $D_n$ ($n$ odd) or $E_6$, then the longest element $w_0 \in W$ acts as the (unique) non-trivial graph automorphism of order $2$ on $W$. In the remaining cases, $w_0 \in \Z(W)$.
\end{lm}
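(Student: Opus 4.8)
�The plan is to reduce the statement to the well-known classification of which longest elements $w_0$ in irreducible Weyl groups are central, and then observe that in the remaining cases $-w_0$ (as an element of the reflection representation) realizes the unique diagram automorphism. Concretely, recall that $w_0$ always acts on the root system as $-\tau$ for some automorphism $\tau$ of the Dynkin diagram; equivalently, $w_0$ sends the set of simple roots $\Delta$ to $-\Delta$, inducing a permutation $\tau$ of $\Delta$. Since $w_0^2 = 1$ we have $\tau^2 = \mathrm{id}$, so $\tau$ is an involution (possibly trivial) of the Dynkin diagram, hence induces an automorphism of $W$ via the presentation by simple reflections $s_\alpha \mapsto s_{\tau(\alpha)}$. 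First I would note that $w_0$ is central in $W$ if and only if $w_0$ acts as $-1$ on the reflection representation, which happens precisely when $\tau = \mathrm{id}$, i.e.\ when $-1$ lies in $W$; this is the classical list $A_1$, $B_n=C_n$, $D_n$ with $n$ even, $E_7$, $E_8$, $F_4$, $G_2$. In all other irreducible types, namely $A_n$ ($n\ge 2$), $D_n$ ($n$ odd), and $E_6$, the diagram automorphism $\tau$ is the unique nontrivial involution of the diagram.

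Next I would make precise the claim that \emph{as an automorphism of $W$}, conjugation by $w_0$ coincides with the graph automorphism induced by $\tau$. For this, compute $w_0 s_\alpha w_0^{-1}$ for a simple reflection $s_\alpha$: since $w_0$ is an isometry sending $\alpha$ to $-\tau(\alpha)$, it conjugates the reflection $s_\alpha$ to the reflection in $w_0(\alpha) = -\tau(\alpha)$, which is $s_{\tau(\alpha)}$ because reflections only depend on the line spanned by the root. Hence $c_{w_0}(s_\alpha) = s_{\tau(\alpha)}$ for every simple root $\alpha$, which is exactly the definition of the graph automorphism attached to $\tau$ on the generators, and these generate $W$. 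So $c_{w_0}$ equals that graph automorphism on the nose. In the central cases $\tau$ is trivial and $c_{w_0}$ is the identity, consistent with $w_0 \in \Z(W)$.

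The only genuinely case-dependent input is identifying $\tau$ explicitly in types $A_n$, $D_n$ ($n$ odd), $E_6$ and verifying it is the unique nontrivial diagram involution: for $A_n$ one has $\tau(\alpha_i) = \alpha_{n+1-i}$ (the flip of the chain), for $D_n$ with $n$ odd $\tau$ swaps the two spin nodes $\alpha_{n-1}, \alpha_n$ and fixes the rest, and for $E_6$ $\tau$ is the order-two symmetry of the $E_6$ diagram; in each of these diagrams the automorphism group is exactly $\C_2$, so ``the'' nontrivial graph automorphism is unambiguous. For $D_4$ one must note that although $\mathrm{Aut}$ of the diagram is $\mathfrak{S}_3$, here $n=4$ is even so $w_0$ is central and $D_4$ falls in the other case, causing no conflict; and $D_n$ for $n$ odd genuinely has diagram automorphism group $\C_2$. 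These facts are all standard (e.g.\ \cite[Section 2.12]{HumphReflec} or Bourbaki's plates), so I would simply cite them rather than recompute.

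The main obstacle, such as it is, is purely bookkeeping: one must be careful that the statement is about the action on $W$ (conjugation) rather than on the root lattice or the ambient vector space, and that the ``graph automorphism'' is the one induced by $\tau$ where $w_0 = -\tau$, not $-w_0$ itself. Once the identity $c_{w_0}(s_\alpha) = s_{\tau(\alpha)}$ is established uniformly, the split into the two cases is dictated entirely by whether $-1 \in W$, which is the classical list, so there is no real difficulty beyond quoting it correctly.
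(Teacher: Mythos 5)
Your proposal is correct and rests on exactly the standard facts ($w_0(\Delta)=-\Delta$, so $w_0=-\tau$ for a diagram involution $\tau$, hence $c_{w_0}(s_\alpha)=s_{\tau(\alpha)}$, with $w_0$ central precisely when $-1\in W$) that the paper invokes by citing the remarks following \cite[Corollary B.23]{MT}. You have simply written out the argument behind that citation, so the approach is essentially the same.
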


\begin{proof}
	Follows from remarks following \cite[Corollary B.23]{MT}.
\end{proof}

For $\bW$ and $\bV$ as in Section~\ref{setup}, it is an obvious question whether the action of the representative $\tilde{w}_0$ of $w_0$ in $\bV$ can be described in a similar way. The next lemma gives a positive answer to this.

\begin{lm}
	Whenever $w_0 \in \Z(\bW)$ then we have $\tilde{w}_0 \in Z(\bV)$. In the remaining cases we have $\mathrm{C}_{\bV}(\tilde{w}_0)=\mathrm{C}_{\bV}(\gamma_0)$, where $\gamma_0$ is the graph automorphism which acts as $w_0$ on $\bW$. 
\end{lm}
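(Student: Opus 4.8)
The claim has two parts. The first, that $w_0 \in \Z(\bW)$ implies $\tilde w_0 \in \Z(\bV)$, should follow from the fact that $\bV/\bH \cong \bW$ together with a direct examination of the action of $\tilde w_0$ on the generators $n_\alpha(1)$ of $\bV$. Concretely, for a simple root $\alpha$, conjugation by $\tilde w_0$ sends $n_\alpha(1)$ to $n_{w_0(\alpha)}(\pm 1) = n_{-\alpha}(\pm 1)$, and since $w_0$ is central in $\bW$ we have $w_0(\alpha) = -\alpha$ for every root. The only ambiguity is then the sign, i.e.\ an element of $\bH \cap \Z(\bV)$, and one checks (using the Chevalley commutator relations, or the explicit structure constants as recorded in \cite[Section 3.A]{MS}) that this sign is trivial, so $\tilde w_0$ centralises every generator and hence lies in $\Z(\bV)$.

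For the second part, write $\gamma_0$ for the graph automorphism inducing $w_0$ on $\bW$ (which exists by Lemma \ref{longest element} in the three families $A_n$ with $n\geq 2$, $D_n$ with $n$ odd, and $E_6$), and normalise $\gamma_0$ so that it permutes the chosen Chevalley generators $n_\alpha(1)$ according to the diagram symmetry. The inclusion $\C_{\bV}(\tilde w_0) \supseteq \C_{\bV}(\gamma_0)$, or rather a comparison of the two, is obtained by analysing the automorphism $\gamma_0^{-1} \circ (\text{conjugation by }\tilde w_0)$ of $\bV$: both $\gamma_0$ and conjugation by $\tilde w_0$ induce the same automorphism $w_0$ on the quotient $\bW = \bV/\bH$, so their composite is trivial on $\bV/\bH$, i.e.\ it is an automorphism of $\bV$ fixing $\bH$ pointwise setwise and acting trivially modulo $\bH$. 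One then shows this composite is in fact the \emph{identity} on $\bV$ — equivalently, that conjugation by $\tilde w_0$ agrees with $\gamma_0$ as automorphisms of $\bV$ — by checking it on generators: $\gamma_0^{-1}$ sends $n_\alpha(1) \mapsto n_{w_0^{-1}(\alpha)}(\pm 1)$ and conjugation by $\tilde w_0$ sends $n_\alpha(1) \mapsto n_{w_0(\alpha)}(\pm 1)$, and since $w_0$ is an involution these land on the same node; the signs must then be reconciled via the structure constants, possibly after adjusting $\gamma_0$ by an inner automorphism by an element of $\bH$ (which does not change $\C_{\bV}(\gamma_0)$ since $\bH$ is abelian and $\bH \subseteq \bV$). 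Once conjugation by $\tilde w_0$ and $\gamma_0$ coincide on $\bV$, the equality of centralisers $\C_{\bV}(\tilde w_0) = \C_{\bV}(\gamma_0)$ is immediate.

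It is likely cleanest to treat the three families $A_n$, $D_n$ ($n$ odd), $E_6$ uniformly by this generator-level argument rather than case by case, invoking \cite[Section 3.A]{MS} for the precise conventions on the $n_\alpha(t)$ and on $\tilde w_0$; alternatively one can reduce to the rank-one and rank-two subsystems generated by pairs of simple roots, where the identity $\tilde w_0 = \gamma_0$ on $\bV$ can be verified by hand in $\SL_2$ and in the rank-two groups.

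\textbf{Main obstacle.} The delicate point is bookkeeping the signs: conjugation by $\tilde w_0$ and the graph automorphism $\gamma_0$ a priori differ by an element of $\bH = \bV \cap \bT$ (a product of the $h_\alpha(\pm 1)$), and one must show either that this discrepancy vanishes or that it is inner-by-$\bH$ and hence irrelevant for the centraliser. This requires care with the Steinberg/Tits presentation of $\bV$ and the explicit structure constants fixed in \cite{MS}; everything else is formal, flowing from $\bV/\bH \cong \bW$ and Lemma \ref{longest element}.
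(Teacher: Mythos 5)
Your reduction is the right one---everything comes down to comparing conjugation by $\tilde w_0$ with a diagram symmetry on the generators $n_\alpha(1)$ of $\bV$, with the discrepancy living in $\bH$---but the proposal stops exactly where the content of the lemma begins. Both halves rest on an unexecuted sign computation (``one checks that this sign is trivial'', ``the signs must then be reconciled''). For instance, in the first half the conjugate of $n_\alpha(1)$ by $\tilde w_0$ is $n_{-\alpha}(\pm 1)=n_\alpha(\mp 1)$, which differs from $n_\alpha(1)$ by an element of $\bH$ such as $h_\alpha(-1)$; that element is not a priori trivial, nor even central in $\bV$, so the phrase ``an element of $\bH\cap\Z(\bV)$'' is already unjustified. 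Showing that this discrepancy vanishes for the canonical reduced-word lift $\tilde w_0$ is precisely the nontrivial fact. This is also why the paper does not argue directly: its proof is a one-line appeal to the references given in the proof of \cite[Lemma 3.2]{MS}, where these properties of the extended Weyl group $\bV$ are established.

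There is moreover a concrete error in the escape route you propose for the second half: adjusting $\gamma_0$ by an inner automorphism coming from $\bH$ does in general change the fixed-point subgroup. Writing the adjusted map as $c_{h'}\circ\gamma_0$ for some $h'\in\bH$, its fixed points form the twisted centraliser $\{x\in\bV : h'\gamma_0(x)(h')^{-1}=x\}$, which coincides with $\C_{\bV}(\gamma_0)$ only when $h'$ is central in $\bV$; ``$\bH$ is abelian'' does not give this, since $\bH$ is abelian but not central in $\bV$. So if the composite $\gamma_0^{-1}\circ c_{\tilde w_0}$ turned out to be conjugation by a non-central element of $\bH$, your argument would not yield $\C_{\bV}(\tilde w_0)=\C_{\bV}(\gamma_0)$. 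To close the gap you must actually prove that conjugation by $\tilde w_0$ agrees with the normalised graph automorphism $\gamma_0$ on every generator $n_\alpha(1)$, $\alpha\in\Phi$ --- which is again the deferred sign bookkeeping. The rank-one and rank-two reduction you mention at the end is a viable way to carry this out (or one can simply cite the literature, as the paper does), but as written the decisive step is asserted rather than proved.
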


\begin{proof}
	This follows from the citations given in the proof of \cite[Lemma 3.2]{MS}.
\end{proof}

\begin{lm}\label{E-stable Sylow}
	There exists an $E$-stable Sylow $2$-subgroup $W_2$ of $\bW^{w_0 F}$ with $w_0 \in Z(W_2)$. Moreover, $W_2$ is a Sylow $2$-subgroup of $\bW^F$.
\end{lm}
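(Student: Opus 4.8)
The plan is as follows. As $\bG$ is simple, $\bW$ is an irreducible Weyl group, and the longest element $w_0$ is the unique element of maximal length, hence is fixed by every automorphism of the Dynkin diagram; in particular $w_0 \in \bW^F$. Since $F_0$ acts trivially on $\bW$, the group $E$ acts on $\bW$ through graph automorphisms, namely through the subgroup $E_1$ of graph automorphisms commuting with $\gamma$ (see Notation~\ref{not:3.4}); as conjugation by $w_0$ is either trivial or the order-$2$ graph automorphism (Lemma~\ref{longest element}) and therefore lies in, and commutes with, the relevant (here abelian) part of the graph automorphism group, $E$ stabilises $w_0$, $\bW^F$ and $\bW^{w_0 F}$. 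I would then split into the two cases of Lemma~\ref{longest element}.

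\emph{Case $w_0 \in Z(\bW)$.} Conjugation by $w_0$ is trivial, so $w_0 F$ and $F$ induce the same automorphism of $\bW$; thus $\bW^{w_0 F} = \bW^F$ and $w_0 \in Z(\bW^F)$, and it suffices to produce an $E$-stable Sylow $2$-subgroup $W_2$ of $\bW^F$ --- then $w_0 \in Z(W_2)$ and $W_2 \in \mathrm{Syl}_2(\bW^{w_0 F})$ are automatic. Now $\bW^F$ is again (isomorphic to) a Weyl group, so by Lemma~\ref{self normalising} has self-normalising, hence an odd number of, Sylow $2$-subgroups, permuted by $E$. Outside type $D_4$ the image of $E$ in $\Aut(\bW^F)$ is trivial or of order $2$ (if $\gamma = 1$ then $E_1$ is a graph automorphism group of order $\le 2$ acting on $\bW^F = \bW$; if $\gamma \ne 1$ then $E_1 = \langle \gamma \rangle$ acts trivially on the fixed-point group $\bW^F = \bW^\gamma$), so $E$ fixes one of these subgroups. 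If $\bG$ is of type $D_4$ and $\gamma = 1$, then $E_1 \cong \Symm_3$, and the $\Symm_3$-stable Sylow $2$-subgroup of $\bW(D_4)$ constructed in the proof of Lemma~\ref{self normalising} is the required $W_2$; if $\gamma \ne 1$, then $E$ acts trivially on $\bW^F = \bW(D_4)^\gamma$ and any Sylow $2$-subgroup works.

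\emph{Case $w_0 \notin Z(\bW)$.} By Lemma~\ref{longest element}, $\bW$ is of type $A_n$ ($n \ge 2$), $D_n$ ($n$ odd) or $E_6$, the graph automorphism group is $\langle \gamma_0 \rangle$ with $\gamma_0$ the non-trivial graph automorphism of order $2$, and conjugation by $w_0$ equals $\gamma_0$; in particular $E_1 = \langle \gamma_0 \rangle$. Hence $\C_\bW(w_0) = \bW^{\gamma_0}$, which is the Weyl group of the folded root system --- of type $C_{\lceil n/2 \rceil}$, $C_{n-1}$ and $F_4$ respectively. On $\bW^{\gamma_0}$ the automorphism $\gamma_0$, and hence all of $E$, acts trivially, and $w_0 \in Z(\bW^{\gamma_0})$ since $w_0 x w_0^{-1} = \gamma_0(x) = x$ for $x \in \bW^{\gamma_0}$. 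Moreover $\gamma \in \{1, \gamma_0\}$, so $\{\bW^F, \bW^{w_0 F}\} = \{\bW, \bW^{\gamma_0}\}$ and $\bW^{\gamma_0}$ is a subgroup of both; a direct computation of $2$-parts (using $v_2(k!) = k - s_2(k)$ and $s_2(2k) = s_2(k)$ for type $A$, the oddness of $n$ for type $D$, and $|\bW(F_4)|_2 = 2^7 = |\bW(E_6)|_2$) gives $|\bW^{\gamma_0}|_2 = |\bW|_2$. Consequently any $W_2 \in \mathrm{Syl}_2(\bW^{\gamma_0})$ is $E$-stable, has $w_0$ in its centre, and is a Sylow $2$-subgroup of both $\bW^F$ and $\bW^{w_0 F}$, which completes this case.

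The step I expect to carry the real content is the identification $\C_\bW(w_0) = \bW^{\gamma_0}$ in the second case, together with the two facts that make it usable --- that $w_0$ is central in $\bW^{\gamma_0}$ and that the diagram automorphisms act trivially there --- after which the statement collapses to the elementary order count above. The only other delicate point is type $D_4$ in the first case: there $E$ may act as $\Symm_3$ on the (odd-sized) set of Sylow $2$-subgroups, so the existence of an $E$-fixed one is not forced by counting and must be imported from the explicit construction in the proof of Lemma~\ref{self normalising}.
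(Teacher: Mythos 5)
Your proof is correct and follows essentially the same route as the paper: the key points in both are that the fixed-point subgroup of the relevant graph automorphism has odd index in $\bW$ (which you verify by a direct $2$-adic computation where the paper cites the order formulas in Cabanes--Enguehard), that an order-$2$ automorphism group must fix one of the odd number of Sylow $2$-subgroups in the $D_{2n}$ case, and that type $D_4$ is settled by the explicit $E$-stable Sylow $2$-subgroup from the proof of Lemma~\ref{self normalising}. The only difference is organisational (you split by whether $w_0$ is central rather than by whether the type is $D_{2n}$), so no further comment is needed.
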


\begin{proof}
	Let us first assume that $\bW$ is not of type $D_{2n}$. Using the formulas given on the bottom of \cite[page 121]{MarcBook} together with the well-known order formulas for Weyl groups, we deduce that  $|\bW:\bW^\sigma|$ is odd for any graph automorphism $\sigma$.
	Moreover, $w_0$ is $\sigma$-stable so we can choose $W_2$ to be a Sylow $2$-subgroup of $\bW^\sigma$  with $w_0 \in \Z(W_2)$ by Lemma \ref{longest element}.
	
In type $D_{2n}$ with $2n>4$, the element $w_0$ corresponds to a central element of $\bW$ and so $\bW^{w_0 F}=\bW^F$.
If $2n>4$, it can be assumed that $F$ is a field automorphism, otherwise, $E$ acts trivially on $\bW^{F}$.
It therefore suffices to find a $\sigma$-stable Sylow $2$-subgroup of $\bW$ for $\sigma$ the graph automorphism. However $\sigma$ has order $2$, $\bW$ has an odd number of Sylow $2$-subgroups and so by the orbit-stabiliser theorem one must be fixed by $\sigma$.

This leaves the case when $\bW$ is of type $D_4$.
As before, it can be assumed $F$ is a field automorphism, otherwise the group $E$ acts trivially on $\bW^F$. In this case $\bW^F=\bW$ and it was shown in the proof of Lemma~\ref{self normalising} that $W(D_4)$ has a Sylow $2$-subgroup which is $E$-stable.
\end{proof}

Let $V_2$ be the preimage of the Sylow $2$-subgroup $W_2$ from Lemma \ref{E-stable Sylow} under the natural projection map $V \to W$.

\begin{cor}\label{construction}
	The Sylow $2$-subgroup $P:=T_2V_2$ of $G$ is $E$-stable.
\end{cor}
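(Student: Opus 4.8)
The plan is to check that each of the building blocks $T_2$ and $V_2$ is $E$-stable, and then that their product is again a Sylow $2$-subgroup that inherits this stability. First I would recall that by Lemma \ref{Sylow} the group $E$ stabilises $N=TV$, $T=\bT^{vF}$, $V=\bV^{vF}$ and $H=\bH^{vF}$, and hence acts on the quotient $V/H \cong \bW^{vF}=\bW^{w_0F}$ (here using $v=\tilde w_0$ when $d=2$, and $v=1$ when $d=1$, in which case $w_0F$ should be read as $F$). The key input is Lemma \ref{E-stable Sylow}, which furnishes an $E$-stable Sylow $2$-subgroup $W_2$ of $\bW^{w_0F}$; by construction $V_2$ is its full preimage under $V\to V/H$, so $V_2$ is automatically $E$-stable as the preimage of an $E$-stable subgroup under an $E$-equivariant map. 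Since $H=\bH^{vF}$ is a $2$-group (it is the group of $vF$-fixed points of the finite $2$-group $\bH$ generated by the $h_\alpha(-1)$), $V_2$ is indeed a Sylow $2$-subgroup of $V$, matching the choice made before Corollary \ref{star property}.

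Next I would deal with $T_2$. The torus $\bT$ is $F$-stable and $vF$-stable, and $E$ acts on $T=\bT^{vF}$; the Sylow $2$-subgroup $T_2$ of the abelian group $T$ is its \emph{unique} Sylow $2$-subgroup, namely $T_2=\{t\in T\mid t^{2^k}=1 \text{ for some }k\}$, which is therefore characteristic in $T$ and in particular $E$-stable. This removes any ambiguity in the earlier phrase "let $T_2$ and $V_2$ be a Sylow $2$-subgroup of $T$ and $V$" — for $T$ there is no choice to be made.

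Finally I would assemble the two pieces. We have $P=T_2V_2$, and since $T_2\subseteq T\lhd N$ and $V_2$ normalises $T$, the set $T_2V_2$ is a subgroup (indeed $T\lhd N$ and $V_2\le V$ acts on $T$ preserving $T_2$, so $T_2V_2$ is a subgroup of $TV_2$ with $T_2\lhd T_2V_2$); it was already observed in Section \ref{setup} that $P$ is a Sylow $2$-subgroup of $G$. Because $T_2$ and $V_2$ are each $E$-stable and multiplication of subsets is preserved by the group action, $P^\sigma=T_2^\sigma V_2^\sigma=T_2V_2=P$ for every $\sigma\in E$, so $P$ is $E$-stable. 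The only genuinely substantive point is the $E$-stability of $W_2$, but that is exactly the content of Lemma \ref{E-stable Sylow}, which we are entitled to use; the rest is bookkeeping about unique Sylow subgroups of abelian groups and equivariance of the projection $V\to\bW^{w_0F}$. I expect the main obstacle, if any, to be purely notational: making sure the identification $V/H\cong\bW^{w_0F}$ and the action of $E$ on it are set up so that "preimage of $W_2$" really is $E$-stable, which follows since $H$, the projection map, and $W_2$ are all $E$-stable.
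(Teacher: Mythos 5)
Your proof is correct and follows essentially the same route as the paper: the key point in both is that $V_2$ is the full preimage in $V$ of the $E$-stable Sylow $2$-subgroup $W_2$ from Lemma \ref{E-stable Sylow} under the $E$-equivariant projection $V\to V/H\cong W$ (with $H$ itself $E$-stable), while $T_2$ is automatically $E$-stable as the unique Sylow $2$-subgroup of the abelian group $T$. Your additional remarks on $H$ being a $2$-group and on $T_2V_2$ being a subgroup are harmless elaborations of facts already recorded in Section \ref{setup}.
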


\begin{proof}
 The group $\bH$ is a characteristic subgroup of $\bV$ and so $H \subseteq V_2$. Since $V/H \cong W$ and the image of $V_2$ in $W$ is $E$-stable it follows that $V_2$ is $E$-stable.
\end{proof}

As a consequence of this the Sylow $2$-subgroup $Q=\iota^{-1}(P)$ of $G_0$ is $D$-stable, where $D:=\iota^{-1}(E)/ \langle \hat F \rangle$.

\section{Parametrisations of characters}

\subsection{Duality and character bijections of tori}\label{DualityChar}

We show how duality can be used to provide bijections between certain characters of tori. For $(\bG,\bT,F)$ from Section~\ref{setup}
take $(\bG^\ast,\bT^\ast,F^\ast)$ to be a triple in duality as in \cite[Definition 13.10]{DM}.
Denote by $W_2^\ast$ and $w_0^\ast$ the image of $W_2$ respectively $w_0$ under the isomorphism $\bW \to \bW^\ast$ induced by duality. 
In the following we let $v^\ast$ be a fixed preimage in $\N_{\bG^\ast}(\bT^\ast)$ of $w_0^\ast$ whenever $d=2$, otherwise $v^\ast:=1$. 
Moreover, we will denote the images of $v$ and $v^\ast$ in $\bW$ respectively $\bW^\ast$ by the same symbol.

\begin{prop}\label{conversion}
Let $W_2$ be as in Lemma~\ref{E-stable Sylow}.
Then there exists a bijection $$\alpha:\Irr(\bT^F)^{W_2} \to \Irr(\bT^{vF})^{W_2}.$$ Moreover, if $\sigma: \bG \to \bG$ is a bijective morphism with $\sigma(\bT)=\bT$ commuting with $F$ such that $\sigma(v)=v$, then this bijection is equivariant with respect to $\sigma$.
\end{prop}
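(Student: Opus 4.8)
The plan is to realise both character groups $\Irr(\bT^F)$ and $\Irr(\bT^{vF})$ via a common combinatorial model, namely the quotient $X(\bT)/(qw-1)X(\bT)$ of the character lattice, where $w$ is the twist element ($w=1$ when $d=1$, $w=w_0$ when $d=2$), and then identify the $W_2$-fixed points on each side. First I would recall the standard fact that for a Frobenius endomorphism $F$ acting on $\bT$ via $q\phi$ on the character lattice $X=X(\bT)$, one has a canonical isomorphism $\Irr(\bT^F)\cong X/(q\phi-1)X$; applied to $F$ (where $\phi$ is induced by $\gamma$) and to $vF$ (where the action on $X$ is $q w_0\phi$, using that $v$ is a representative of $w_0$), this gives $\Irr(\bT^F)\cong X/(q\phi-1)X$ and $\Irr(\bT^{vF})\cong X/(qw_0\phi-1)X$. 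Because $\bS\leq\bT$ is a Sylow $d$-torus of $(\bG,vF)$ with $\bT=\C_\bG(\bS)$ (Lemma~\ref{Sylow}), and by the setup $W_2$ is a $2$-subgroup of $\bW^{w_0F}$ with $w_0\in\Z(W_2)$ (Lemma~\ref{E-stable Sylow}), the group $W_2$ acts on both lattices compatibly; the key point is that on the abelian groups in question the two $F$-actions differ by the central element $w_0$, which acts on $X\otimes\R$ as $-1$ precisely when $d=2$ (Lemma~\ref{longest element} and the fact that $w_0$ is $-1$ on the relevant root subsystem), so $qw_0\phi-1=-(q\phi+1)$ up to sign issues I would need to track carefully.

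Next I would produce the bijection $\alpha$ explicitly. When $d=1$ there is nothing to do: $v=1$, so $vF=F$ and $\alpha$ is the identity. When $d=2$, I would argue that multiplication by a suitable element (or the map induced by $w_0$ itself) furnishes an isomorphism $X/(q\phi-1)X\to X/(qw_0\phi-1)X$ commuting with the $W_2$-action — here one uses $w_0\in\Z(W_2)$ so that conjugating the $W_2$-action by $w_0$ is trivial, hence the induced map on fixed points $\Irr(\bT^F)^{W_2}\to\Irr(\bT^{vF})^{W_2}$ is a well-defined bijection. Concretely I expect the cleanest route is to note that $\bT^F$ and $\bT^{vF}$ have the same order (both equal $|X/(q\pm1)\cdots|$ suitably) and that the relevant $W_2$-module structures are abstractly isomorphic, then pin down a canonical isomorphism using duality to make the later $\sigma$-equivariance transparent. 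An alternative, and perhaps the intended one given the section title ``Duality and character bijections of tori'', is to pass to the dual group: $\Irr(\bT^F)\cong(\bT^*)^{F^*}$ and $\Irr(\bT^{vF})\cong(\bT^*)^{v^*F^*}$, and then the twisting isomorphism of Lemma~\ref{TwistIsom} applied on the dual side (with $g^*\in\bG^*$ satisfying $g^*F^*(g^*)^{-1}=v^*$) restricts to an isomorphism $(\bT^*)^{F^*}\to(\bT^*)^{v^*F^*}$ intertwining the actions of $W_2^*$; transporting back gives $\alpha$.

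For the equivariance statement, given $\sigma$ with $\sigma(\bT)=\bT$, $\sigma F=F\sigma$ and $\sigma(v)=v$, I would observe that $\sigma$ induces compatible automorphisms of $X(\bT)$ (equivalently, via duality, of $\bT^*$) commuting with all the operators $q\phi-1$, $qw_0\phi-1$ and with the $W_2$-action, because $\sigma$ fixes $v$ hence normalises everything in sight; therefore $\sigma$ commutes with whichever canonical map was used to define $\alpha$, and restricting to $W_2$-fixed points preserves this. The main obstacle I anticipate is not any single hard inequality but rather the bookkeeping: making the isomorphism $\alpha$ genuinely canonical (not just ``an'' isomorphism of abstract $W_2$-modules) so that $\sigma$-equivariance is automatic, and carefully handling the $d=2$ case where the sign/twist by $w_0$ enters — in particular verifying that $w_0$ really does act centrally on the lattice modulo $(qw_0\phi-1)$ in all the relevant types, including the subtle type $D_{2n}$ and $D_4$ cases flagged in Lemma~\ref{E-stable Sylow}. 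I would therefore spend most of the write-up fixing the precise canonical identification and checking it is respected by both $W_2$ and by any $\sigma$ of the stated form.
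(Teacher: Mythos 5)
Your instinct to use duality is the right one (that is exactly what the paper does), but the key mechanism in both of your proposed routes is wrong, and in the same way. There is in general \emph{no} isomorphism between the full groups $\Irr(\bT^F)$ and $\Irr(\bT^{vF})$, equivalently between $X/(q\phi-1)X$ and $X/(qw_0\phi-1)X$, equivalently between $(\bT^\ast)^{F^\ast}$ and $(\bT^\ast)^{F^\ast v^\ast}$: already for $\bG=\SL_2$ these have orders $q-1$ and $q+1$ respectively (cf.\ Example \ref{example}). So the claimed lattice isomorphism "induced by $w_0$ or multiplication by a suitable element" cannot exist, and the twisting map of Lemma \ref{TwistIsom} applied on the dual side does not restrict to a map $(\bT^\ast)^{F^\ast}\to(\bT^\ast)^{F^\ast v^\ast}$ (an element $g^\ast$ with $g^\ast F^\ast(g^\ast)^{-1}=v^\ast$ need not normalise $\bT^\ast$, and even if it did the two groups have different orders). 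The bijection genuinely exists only after taking $W_2$-fixed points, and the proposal never supplies the mechanism that makes the fixed-point sets match.

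The missing idea is Lemma \ref{E-stable Sylow} combined with duality: since $w_0\in \Z(W_2)$, one has $w_0^\ast\in W_2^\ast$, so any $W_2^\ast$-fixed element $t\in\bT^\ast$ satisfies ${}^{v^\ast}t=t$, whence the conditions $F^\ast(t)=t$ and $F^\ast v^\ast(t)=t$ coincide on such elements. Therefore
$\bigl((\bT^\ast)^{F^\ast}\bigr)^{W_2^\ast}=\bigl((\bT^\ast)^{F^\ast v^\ast}\bigr)^{W_2^\ast}$
as literal subsets of $\bT^\ast$; no map between the ambient groups is needed. Composing the duality bijection $\Irr(\bT^F)^{W_2}\to((\bT^\ast)^{F^\ast})^{W_2^\ast}$ for the triple $(\bG,\bT,F)$ with the inverse of the one for $(\bG,\bT,vF)\leftrightarrow(\bG^\ast,\bT^\ast,F^\ast v^\ast)$ through this common set defines $\alpha$, and $\sigma$-equivariance follows because each duality bijection is $(\sigma,\sigma^\ast)$-equivariant. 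A smaller point: your reduction of the $d=2$ case to "$w_0$ acts as $-1$" is also not available in types $A_n$, $D_{2n+1}$ and $E_6$ (Lemma \ref{longest element}), which is another reason the argument has to go through the fixed-point identity above rather than through a sign computation on the lattice.
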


\begin{proof}
By duality we obtain a bijection $\Irr(\bT^F) \to (\bT^\ast)^{F^\ast}$. Let $\sigma$ be a bijective morphism of $\bG$ which stabilises $\bT$. Then there exists a unique bijective morphism (up to $(\bT^\ast)^{F^\ast}$-conjugation) $\sigma^\ast: \bG^\ast \to \bG^\ast$ commuting with $F^\ast$ and in duality with $\sigma$ such that this bijection is $(\sigma,\sigma^\ast)$-equivariant. 
Then we obtain a bijection $\beta_0:\Irr(\bT^F)^{W_2} \to ((\bT^\ast)^{F^ \ast})^{W_2^\ast}$.

The triple $(\bG,\bT,vF)$ is in duality with $(\bG^\ast,\bT^\ast,F^\ast v^\ast)$.
Thus we similarly obtain a $(\sigma,\sigma^\ast)$-equivariant bijection $\Irr(\bT^{vF}) \to (\bT^\ast)^{F^\ast v^\ast}$.
Furthermore, since $W_2 \subset \C_{W}(w_0) $, this induces a bijection $\beta:\Irr(\bT^{v F})^{W_2} \to ((\bT^\ast)^{F^\ast v^\ast})^{W_2^\ast}$. 
However $v^\ast \in W_2^\ast$ and so $((\bT^\ast)^{F^\ast})^{W_2^\ast}=((\bT^\ast)^{F^\ast v^\ast})^{W_2^\ast}$. In particular, we obtain a bijection $$\alpha:=\beta^{-1} \circ \beta_0: \Irr(\bT^F)^{W_2} \to \Irr(\bT^{vF})^{W_2}$$ which is $\sigma$-equivariant as both $\beta$ and $\beta_0$ are $(\sigma,\sigma^\ast)$-equivariant.
\end{proof}


\begin{rem}
By \cite[Equation (15.2)]{MarcBook} duality induces bijections $ \mathrm{Z}(\tilde \bG^\ast)^{F^\ast} \to \Irr(\tilde \bT^F / \bT^F)$ and $\mathrm{Z}(\tilde \bG^\ast)^{ F^\ast v^\ast} \to \Irr(\tilde \bT^{vF} / \bT^{vF})$. In particular, if $\theta_0 \in \Irr(\tilde \bT^F / \bT^F)$ is the character corresponding to $z \in \mathrm{Z}(\tilde \bG^\ast)^{F^\ast}$ then $\theta:=\theta_0 \circ \iota \in \Irr(\tilde \bT^{vF} / \bT^{vF})$ is the character corresponding to the same central element $z \in \mathrm{Z}(\tilde \bG^\ast)^{F^\ast}$. Thus, we will denote the character $\theta$ and $\theta_0$ by the same symbol $\hat{z}$. 
\end{rem}

In the following we will employ the notation introduced in Section \ref{setup} with respect to the dual group $\bG^\ast$.
Moreover, for $s \in \bT^\ast$ we denote by $\bW^\circ(s)$ the Weyl group of $\C_{\bG^ \ast}^\circ(s)$ with respect to the maximal torus $\bT^\ast$ and $\bW(s):=\C_{\bW^\ast}(s)$.

\begin{prop}\label{connected Weyl}
	For $s \in (T_2^\ast)^{W_2^\ast}$ we have $v^\ast \in W^\circ(s)$.
\end{prop}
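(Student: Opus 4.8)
The plan is to show that the element $v^\ast$, which for $d=2$ is the preimage of the longest element $w_0^\ast$, actually lies in the Weyl group of the connected centraliser $\C_{\bG^\ast}^\circ(s)$ for the semisimple elements $s$ under consideration. Since for $d=1$ we have $v^\ast=1\in W^\circ(s)$ trivially, I may assume $d=2$, so $q\equiv 3\pmod 4$ and $v^\ast$ represents $w_0^\ast$.

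\medskip

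First I would unwind the hypothesis $s\in (T_2^\ast)^{W_2^\ast}$. The condition $s\in T_2^\ast$ means $s$ is a $2$-element of $\bT^{\ast F^\ast v^\ast}$ (recall $v^\ast\in W_2^\ast$ so $T_2^\ast$ is a Sylow $2$-subgroup of $\bT^{\ast F^\ast v^\ast}=\bT^{\ast F^\ast}$), and the condition $s\in (T_2^\ast)^{W_2^\ast}$ means $s$ is fixed by every element of the Sylow $2$-subgroup $W_2^\ast$ of $\bW^\ast$. In particular $s$ is centralised by $w_0^\ast\in Z(W_2^\ast)$, so $w_0^\ast\in \bW(s)=\C_{\bW^\ast}(s)$. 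The real content is to promote this to membership in the \emph{connected} part $\bW^\circ(s)$, equivalently to show that $w_0^\ast$, viewed as an element of $\N_{\bG^\ast}(\bT^\ast)/\bT^\ast$, stabilises a Borel subgroup of $\C_{\bG^\ast}^\circ(s)$ containing $\bT^\ast$, i.e. permutes the roots of $\C_{\bG^\ast}^\circ(s)$ (relative to $\bT^\ast$) preserving some system of positive roots.

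\medskip

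Here is the key point. The root system $\Phi(s)$ of $\C_{\bG^\ast}^\circ(s)$ with respect to $\bT^\ast$ consists of those roots $\alpha\in\Phi^\ast$ with $\alpha(s)=1$. Now $w_0^\ast$ sends $\Phi^+$ to $\Phi^-$, and crucially it sends $\Phi^+(s):=\Phi(s)\cap\Phi^+$ to $\Phi^-(s):=\Phi(s)\cap\Phi^-$: indeed if $\alpha\in\Phi^+(s)$ then $w_0^\ast(\alpha)=-\alpha'$ for some $\alpha'\in\Phi^+$, and since $w_0^\ast$ centralises $s$ we get $\alpha'(s)=(-w_0^\ast\alpha)(s)=\alpha(w_0^{\ast-1}s)=\alpha(s)=1$, so $-\alpha'=w_0^\ast(\alpha)\in\Phi^-(s)$. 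Hence $w_0^\ast$ stabilises $\Phi(s)$ and restricts to the element of the Weyl group $\bW(\Phi(s))=\bW^\circ(s)$ that sends $\Phi^+(s)$ to $\Phi^-(s)$ — namely the longest element $w_0(s)$ of $\bW^\circ(s)$ with respect to the positive system $\Phi^+(s)$. In particular $w_0^\ast\in\bW^\circ(s)$. One must check that the representative $v^\ast$ in $\N_{\bG^\ast}(\bT^\ast)$, as opposed to its image $w_0^\ast\in\bW^\ast$, also lies in $\N_{\C_{\bG^\ast}^\circ(s)}(\bT^\ast)$; but this is automatic since $\bW^\circ(s)$ as a subgroup of $\bW^\ast$ is by definition $\N_{\C^\circ_{\bG^\ast}(s)}(\bT^\ast)/\bT^\ast$ and the statement $v^\ast\in W^\circ(s)$ is a statement about this quotient (the excerpt writes $v^\ast$ for both the element and its image).

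\medskip

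The main obstacle is conceptual rather than computational: one must be careful that $\C_{\bG^\ast}(s)$ need not be connected, so $w_0^\ast\in\C_{\bW^\ast}(s)=\bW(s)$ does not by itself give membership in $\bW^\circ(s)$ — the component-group part $\bW(s)/\bW^\circ(s)$ could a priori swallow $w_0^\ast$. The argument above circumvents this by exhibiting $w_0^\ast$ concretely as the longest element of $\bW^\circ(s)$ relative to the induced positive system, using only that $w_0^\ast$ centralises $s$ and reverses the global positive system. I would write this out as: (1) reduce to $d=2$; (2) note $w_0^\ast\in\C_{\bW^\ast}(s)$ from $s\in(T_2^\ast)^{W_2^\ast}$; (3) prove $w_0^\ast$ stabilises $\Phi(s)$ and carries $\Phi^+(s)$ to $\Phi^-(s)$ by the short root computation above; (4) conclude $w_0^\ast$ is the longest element of the Coxeter system $(\bW^\circ(s),\Phi^+(s))$, hence lies in $\bW^\circ(s)$, and therefore $v^\ast\in W^\circ(s)$.
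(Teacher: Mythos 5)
The root-system computation in your step (3) is correct, but the inference in step (4) fails, and it fails at exactly the point you identify as ``the main obstacle'' without actually overcoming it. Write $\bW(s)=\bW^\circ(s)\rtimes A(s)$, where $A(s)$ is the subgroup of elements of $\bW(s)$ stabilising the positive system $\Phi^+(s)$ (equivalently, its base); this is the standard description of the component group of $\C_{\bG^\ast}(s)$. The set of elements of $\bW(s)$ carrying $\Phi^+(s)$ to $\Phi^-(s)$ is not $\{w_0(s)\}$ but the entire coset $w_0(s)A(s)$: for any $a\in A(s)$ the element $w_0(s)a$ also sends $\Phi^+(s)$ to $\Phi^-(s)$. (Note also that the action of $\bW(s)$ on $\Phi(s)$ need not be faithful; in the extreme case $\Phi(s)=\emptyset$ your condition is vacuous.) So your computation only yields $w_0^\ast\in w_0(s)A(s)$, i.e.\ that the image of $w_0^\ast$ in $\bW(s)/\bW^\circ(s)$ agrees with that of some $a\in A(s)$ --- and showing that this image is trivial is precisely the content of the proposition. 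Moreover, your argument uses the hypothesis $s\in(T_2^\ast)^{W_2^\ast}$ only to obtain $w_0^\ast\in\C_{\bW^\ast}(s)$, which is far weaker than what is needed: for a general involution $s\in\bT^\ast$ with disconnected centraliser (e.g.\ $\bG^\ast=\mathrm{PGL}_2$ and $s$ of order $2$, where $\bW^\circ(s)=1$ while $w_0\in\bW(s)$) the conclusion is simply false.

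The paper works hard precisely at this point. It exploits the full strength of the hypothesis --- that $s$ centralises a Sylow $2$-subgroup of $G^\ast$ --- to show that $\C^\circ_{\bG^\ast}(s)$ contains a maximally split torus of $(\bG^\ast,F^\ast v^\ast)$ (via the proof of \cite[Theorem 8.7]{MS} outside type $A$, and via \cite[Theorem 3.3]{MalleGalois} together with a classification of the possible centralisers in type $A$), and then a rational-conjugacy argument using \cite[Application 3.23]{DM}, applied to an element $x\in\C^\circ_{\bG^\ast}(s)$ conjugating $\bT^\ast$ to that torus, exhibits $w_0^\ast$ as an element of $W^\circ(s)$. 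Some global input of this kind is unavoidable: the purely combinatorial argument from the positive system cannot see the component group.
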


\begin{proof}
It can be assumed that $q \equiv 3 \, \mathrm{mod} \, 4$, otherwise $v^\ast$ is trivial.
In particular, $v^\ast$ is our fixed preimage of $w_0^\ast$ in $\N_{\bG^\ast}(\bT^\ast)$ and $s$ centralises a Sylow $2$-subgroup of $G^\ast$.

We have $w_0^\ast \in W(s)$, so $w_0^\ast \in W^\circ(s)$ whenever $\bC:=\C_{\bG^\ast}(s)$ is connected. We can therefore assume that $\bC$ is disconnected. Let us first suppose that $\bG$ is not of type $A_n$. By the proof of \cite[Theorem 8.7]{MS}, using that $s$ centralises a Sylow $2$-subgroup of $G^ \ast$, the centraliser $\bC^\circ$ contains a maximally split torus $\mathbf{S}$ of $(\bG,F^\ast v^\ast)$. As $\bT^\ast \subset \bC^\circ$ there exists $x \in \bC^\circ$ such that $\bS={}^x \bT^\ast$. Let $h \in \bG^\ast$ such that $F^\ast(v^\ast)=h F^\ast(h^{-1})$. In particular, ${}^{h^{-1}} \bS$ is a maximally split torus of $(\bG^\ast,F^\ast)$. Assume first that $F$ is untwisted, i.e. $F^\ast$ induces the identity on $\bW^\ast$. Since $\bT^\ast$ is also a maximal $1$-split torus of $(\bG^\ast,F^\ast)$ we have $(h^{-1} x)^{-1} F^\ast(h^{-1} x)=x^{-1} h^{-1} F^\ast(h) F^\ast(x) \in \bT^\ast$, see \cite[Application 3.23]{DM}. Since $x \in \bC^\circ$ and the image of $h F^\ast(h^{-1})$ in $\bW^\ast$ is $w_0^\ast$, we find that $w_0^\ast\in W^\circ(s)$.
	
Assume now that $F$ is twisted, i.e. $\phi:= F^\ast v^\ast$ induces the identity on $\bW^\ast$. Here, we use that both $\bS$ and ${}^h \bT^\ast$ are maximally split tori of $(\bG^\ast,\phi)$. In particular, $\phi$ acts trivially on the Weyl group $\bW({}^h \bT^\ast)$ and again by \cite[Application 3.23]{DM} we have $(xh^{-1})^{-1} \phi(xh^{-1}) \in {}^h \bT^\ast$. This yields $x^{-1} \phi(x) \phi(h^{-1}) h \in \bT^\ast$. As $\phi(h^{-1}) h=F^\ast(v_0^\ast)$ we again deduce that $w_0^\ast \in W^\circ(s)$.
	
	Finally, if $\bG^F$ is of type $A_n(\varepsilon q)$, $n >1$, we use the proof of \cite[Theorem 3.3]{MalleGalois}. As $s$ centralises a Sylow $2$-subgroup of $G^\ast$ it follows by the arguments given there (together with the information in \cite[Table 4.5.1]{GLS}) that $n+1$ is necessarily a power of $2$ and $\bC$ is of rational type $A^2_{\frac{n-1}{2}}(\varepsilon q).2$ or $A_{\frac{n-1}{2}}(q^2).2$. A calculation shows that $\bC$ can only contain a Sylow $2$-subgroup of $\bG^{F^\ast v^\ast}$ when $\bC$ has rational type $A^2_{\frac{n-1}{2}}(q).2$. In this case $\bC$ contains a maximal $1$-split torus of $(\bG^\ast,F^\ast v^\ast)$ and the arguments from before apply also here.
\end{proof}

The previous proposition provides a way to compare the characters of $\tilde{\bT}^F$ lying over a $W_2$-stable character of $\bT^F$ with the analogous situation arising from $\tilde{\bT}^{vF}$. The following result will be used in Section \ref{tilde G}.

\begin{prop}\label{conversion2}
Let $\alpha$ be the bijection as in Proposition~\ref{conversion}.
Then there exists a bijection $$\tilde \alpha:\Irr(\tilde \bT^F \mid \Irr(\bT^F)_2^{W_2}) \to \Irr(\tilde \bT^{vF} \mid \Irr(\bT^{vF})_2^{W_2} )$$ such that $\alpha \circ \mathrm{Res}_{\bT^F}^{\tilde \bT^F}=\mathrm{Res}_{\bT^{vF}}^{\tilde \bT^{vF}} \circ \tilde \alpha$ and if $\hat z \in \Irr(\tilde \bT^F/ \bT^F)$ then  $\tilde{\alpha}(\hat z)=\hat z$.

Additionally let $\sigma: \tilde{\bG} \to \tilde{\bG}$ be a bijective morphism commuting with $F$ such that $\sigma|_{\bG}$ is as in Lemma \ref{conversion}.
If $\tilde \lambda \in \Irr(\tilde \bT^F \mid \Irr(\bT^F)_2^{W_2})$ is such that ${}^{\sigma}\tilde\lambda=\tilde \lambda \hat{z}$ for some $z \in \mathrm{Z}(\tilde{\bG}^\ast)^{F^\ast}$, then we have ${}^{\sigma} \tilde \alpha(\tilde \lambda)=\tilde \alpha(\tilde \lambda) \hat{z}$.

\end{prop}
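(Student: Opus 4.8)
The plan is to build $\tilde\alpha$ by "coordinatising" the characters of $\tilde\bT^F$ lying above a given $W_2$-stable $2$-element $\lambda\in\Irr(\bT^F)_2^{W_2}$ by the cosets of $\Irr(\tilde\bT^F/\bT^F)$, and then to transport this coordinatisation through $\alpha$. Concretely, fix once and for all an extension map: for each $W_2$-orbit representative $\lambda$, since $\tilde\bT^F/\bT^F$ is cyclic and $\tilde\bT^F$ is abelian, the fibre $\Irr(\tilde\bT^F\mid\lambda)$ is a torsor under $\Irr(\tilde\bT^F/\bT^F)\cong\Z(\tilde\bG^\ast)^{F^\ast}$ via $\tilde\lambda\mapsto\tilde\lambda\hat z$. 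Choosing one $\tilde\lambda^{\circ}$ in each such fibre (for $\lambda$ in a set of $W_2$-orbit representatives, and choosing $\tilde\lambda^\circ$ compatibly along each orbit, i.e. $W_2$-stable representatives exist because $W_2$ fixes $\lambda$ and acts on the cyclic torsor), one gets every element of $\Irr(\tilde\bT^F\mid\Irr(\bT^F)_2^{W_2})$ uniquely as $\tilde\lambda^\circ\hat z$. Do the same on the $vF$-side, with the key point that $\alpha(\lambda)$ is again $W_2$-stable (Proposition~\ref{conversion}), so we may take $\widetilde{\alpha(\lambda)}^{\circ}$ as the chosen lift of $\alpha(\lambda)$ there. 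Define $\tilde\alpha(\tilde\lambda^\circ\hat z):=\widetilde{\alpha(\lambda)}^{\circ}\hat z$. Then $\mathrm{Res}\circ\tilde\alpha(\tilde\lambda^\circ\hat z)=\alpha(\lambda)=\alpha\circ\mathrm{Res}(\tilde\lambda^\circ\hat z)$ because $\hat z$ is trivial on $\bT^{vF}$ resp.\ $\bT^F$; and taking $\lambda$ trivial (which is $W_2$-stable and whose canonical lift $\tilde\lambda^\circ$ we take to be the trivial character), $\tilde\alpha(\hat z)=\hat z$, using the Remark identifying the $\hat z$ on both sides via $\iota$. The bijectivity is immediate from the torsor description.

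The main work is the equivariance statement. Suppose ${}^\sigma\tilde\lambda=\tilde\lambda\hat z_0$ for some $z_0\in\Z(\tilde\bG^\ast)^{F^\ast}$; write $\tilde\lambda=\tilde\lambda^\circ\hat z$ with $\lambda=\mathrm{Res}\,\tilde\lambda$. Restricting, ${}^\sigma\lambda=\lambda$, so $\sigma$ permutes the fibre $\Irr(\tilde\bT^F\mid\lambda)$; since this fibre is a $\langle\hat z\rangle$-torsor and $\sigma$ fixes $\Irr(\tilde\bT^F/\bT^F)$ setwise, there is $y(\lambda)\in\Z(\tilde\bG^\ast)^{F^\ast}$ with ${}^\sigma\tilde\lambda^\circ=\tilde\lambda^\circ\widehat{y(\lambda)}$. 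The point is that $y(\lambda)$ is intrinsic to the chosen lift and is compatible with $\alpha$: because $\alpha$ is $\sigma$-equivariant (Proposition~\ref{conversion}) and is defined via the $(\sigma,\sigma^\ast)$-equivariant duality bijections, the analogous element on the $vF$-side, ${}^\sigma\widetilde{\alpha(\lambda)}^\circ=\widetilde{\alpha(\lambda)}^\circ\widehat{y'(\lambda)}$, satisfies $y'(\lambda)=y(\lambda)$. To see this one chases the commutative square coming from Proposition~\ref{conversion2}'s first half together with the Remark: $\mathrm{Res}$ is $\sigma$-equivariant, $\alpha$ intertwines the two restrictions, and the $\hat z$'s correspond under $\iota$; since the duality bijections $\Irr(\tilde\bT^F)\to(\tilde\bT^\ast)^{F^\ast}$ and $\Irr(\tilde\bT^{vF})\to(\tilde\bT^\ast)^{F^\ast v^\ast}$ both send $\tilde\lambda^\circ$-type lifts to semisimple elements of $(\bT^\ast)^{\cdots}$ that map to the same $\bG^\ast$-conjugacy data (this is exactly how $\alpha=\beta^{-1}\circ\beta_0$ was built), and $\sigma^\ast$ acts the same way on both, the twist $y(\lambda)$ is read off from $\sigma^\ast$ on $\Z(\tilde\bG^\ast)^{F^\ast}$ and is therefore literally the same element. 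Granting this, compute on the $vF$-side: $\tilde\alpha(\tilde\lambda)=\tilde\alpha(\tilde\lambda^\circ\hat z)=\widetilde{\alpha(\lambda)}^\circ\hat z$, so ${}^\sigma\tilde\alpha(\tilde\lambda)={}^\sigma\widetilde{\alpha(\lambda)}^\circ\cdot{}^\sigma\hat z=\widetilde{\alpha(\lambda)}^\circ\widehat{y(\lambda)}\cdot{}^\sigma\hat z$. On the $F$-side the hypothesis gives $\tilde\lambda\hat z_0={}^\sigma\tilde\lambda={}^\sigma\tilde\lambda^\circ\cdot{}^\sigma\hat z=\tilde\lambda^\circ\widehat{y(\lambda)}\cdot{}^\sigma\hat z=\tilde\lambda\,\hat z^{-1}\widehat{y(\lambda)}\,{}^\sigma\hat z$, whence $\widehat{y(\lambda)}\cdot{}^\sigma\hat z=\hat z\hat z_0$, i.e.\ $\widehat{y(\lambda)}\cdot{}^\sigma\hat z=\hat z\hat z_0$ as characters of $\tilde\bT^F/\bT^F$. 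The same identity of central elements, via $\iota$ and the Remark, holds for the $vF$-characters: $\widehat{y(\lambda)}\cdot{}^\sigma\hat z=\hat z\hat z_0$ in $\Irr(\tilde\bT^{vF}/\bT^{vF})$. Substituting, ${}^\sigma\tilde\alpha(\tilde\lambda)=\widetilde{\alpha(\lambda)}^\circ\cdot\hat z\hat z_0=\tilde\alpha(\tilde\lambda)\hat z_0$, as desired.

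The hard part is the claim that the "twisting cocycle" $y(\lambda)$ agrees on the two sides, i.e.\ that the self-map of the torsor $\Irr(\tilde\bT^F\mid\lambda)$ induced by $\sigma$ is carried by $\tilde\alpha$ to the corresponding self-map of $\Irr(\tilde\bT^{vF}\mid\alpha(\lambda))$ induced by $\sigma$. Everything else is torsor bookkeeping. I expect to establish this by re-examining the construction in Proposition~\ref{conversion}: there $\alpha=\beta^{-1}\circ\beta_0$ with $\beta_0,\beta$ built from the $(\sigma,\sigma^\ast)$-equivariant duality isomorphisms and the natural identification $((\bT^\ast)^{F^\ast})^{W_2^\ast}=((\bT^\ast)^{F^\ast v^\ast})^{W_2^\ast}$ (valid since $v^\ast\in W_2^\ast$ by Proposition~\ref{connected Weyl} and its preamble). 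Under duality the operation "multiply $\tilde\lambda$ by $\hat z$" corresponds to "translate the semisimple class by $z\in\Z(\tilde\bG^\ast)$", an operation defined purely inside $\bG^\ast$ and hence commuting with $\sigma^\ast$ and independent of whether we are looking at $F^\ast$- or $F^\ast v^\ast$-fixed points; so the action of $\sigma$ on the fibre, transported to the dual side, is "$\sigma^\ast$ followed by a $z$-translation", manifestly the same on both sides. That is the crux, and once it is spelled out the equivariance falls out of the computation above.
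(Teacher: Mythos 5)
Your torsor construction does produce a bijection $\tilde\alpha$ satisfying the first two properties, but the equivariance argument has a genuine gap at exactly the point you flag as the crux. Let $\tilde s^\circ\in(\tilde\bT^\ast)^{F^\ast}$ and $\tilde s'\in(\tilde\bT^\ast)^{F^\ast v^\ast}$ be the semisimple elements dual to your chosen lifts $\tilde\lambda^\circ$ and $\widetilde{\alpha(\lambda)}^{\circ}$. The construction of $\alpha$ only guarantees $i^\ast(\tilde s^\circ)=i^\ast(\tilde s')=s$, so $\tilde s'=\tilde s^\circ z_1$ for some $z_1\in\ker(i^\ast)\subseteq \Z(\tilde\bG^\ast)$, and translating your cocycles through duality gives $y'(\lambda)=y(\lambda)\cdot z_1^{-1}\sigma^\ast(z_1)$. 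So $y(\lambda)$ is \emph{not} "read off from $\sigma^\ast$ on $\Z(\tilde\bG^\ast)^{F^\ast}$": it genuinely depends on the lift, and for uncoordinated choices of $\tilde\lambda^\circ$ and $\widetilde{\alpha(\lambda)}^{\circ}$ the correction term $z_1^{-1}\sigma^\ast(z_1)$ is nontrivial in general (e.g.\ for $\sigma$ a field automorphism it is $z_1^{p^k-1}$). Your phrase "map to the same $\bG^\ast$-conjugacy data" only pins down the images in $\bG^\ast$, which is not enough to identify the two fibres as $\sigma$-sets.

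To kill the correction term one must be able to take $\tilde s'=\tilde s^\circ$, i.e.\ one must know that an $F^\ast$-fixed preimage of $s$ in $\tilde\bT^\ast$ is also fixed by $v^\ast$, hence by $F^\ast v^\ast$. That is precisely the content of the paper's proof: it uses Proposition~\ref{connected Weyl} to get $v^\ast\in W^\circ(s)$, lifts this through the isomorphism $W^\circ(\tilde s)\cong W^\circ(s)$ (centralisers of semisimple elements in $\tilde\bG^\ast$ being connected) to conclude ${}^{v^\ast}\tilde s=\tilde s$, and then defines $\tilde\alpha$ canonically by "same dual element $\tilde s$", after which all the stated properties, including equivariance, follow at once. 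In your write-up Proposition~\ref{connected Weyl} appears only to justify $((\bT^\ast)^{F^\ast})^{W_2^\ast}=((\bT^\ast)^{F^\ast v^\ast})^{W_2^\ast}$, which lives in $\bG^\ast$ and was already needed for $\alpha$; the essential application of it at the level of $\tilde\bG^\ast$ is missing. A further (harmless, since unused) slip: a $W_2$-stable extension $\tilde\lambda^\circ$ of $\lambda$ need not exist — whenever $W_0(\lambda_0)/W_0(\tilde\lambda_0)\neq 1$ no extension of $\lambda$ is $W_2$-stable, since $W_2$ surjects onto that $2$-group.
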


\begin{proof}
Duality yields again a bijection $\Irr(\tilde{\bT}^F) \to (\tilde{\bT}^\ast)^{F^\ast}$. Let $\tilde{s} \in (\tilde{\bT}^\ast)^{F^\ast}$ be a semisimple element corresponding to a character $\tilde{\lambda} \in \Irr(\tilde \bT^F \mid \Irr(\bT^F)^{W_2})$ under this bijection. The map $i: \bG \to \tilde{\bG}$ induces by duality a surjective map $i^\ast:\tilde{\bG}^\ast \to \bG^\ast$ and the image $s:=i^\ast(\tilde{s})$ of $\tilde{s}$ lies in $((\bT^\ast)^{F^\ast})^{W_2^\ast}=((\bT^\ast)^{F^\ast v^\ast})^{W_2^\ast}$. In particular, by Proposition \ref{connected Weyl} we have $v^\ast \in W^\circ(s)$. The map $\iota^\ast$ yields an isomorphism $W^\circ(\tilde{s}) \cong W^\circ(s)$ and so we deduce that ${}^{v^\ast} \tilde{s}=\tilde{s}$. In particular, $\tilde{s}$ is $F^\ast v^\ast$-stable. Let $\tilde{\alpha}(\tilde{\lambda}) \in \Irr(\tilde{\bT}^{vF})$ denote the character corresponding to $\tilde{s}$ under the bijection $\Irr(\tilde{\bT}^{v F}) \to (\tilde{\bT}^\ast)^{F^\ast v^\ast}$. One then checks easily that the so-obtained map $$\tilde \alpha:\Irr(\tilde \bT^F \mid \Irr(\bT^F)^{W_2}) \to \Irr(\tilde \bT^{vF} \mid \Irr(\bT^{vF})^{W_2} )$$ is a well-defined bijection which has all the required properties. 
\end{proof}

\subsection{Local characters}

Recall that $P$ denotes the Sylow $2$-subgroup of $G=\bG^{vF}$ constructed in Corollary \ref{construction} and $Q=\iota^{-1}(P)$ its preimage under $\iota$, which is a Sylow $2$-subgroup of $G_0=\bG^{F}$.
In this section we make use of the explicit description of $P$ to provide a description of the odd degree characters in the principal $2$-block of $\N_{G_0}(Q)$.
For a finite group $H$ we denote by $\Irr_{2'}(H)$ its set of irreducible characters of odd degree.

\begin{prop}\label{direct product}
For $P=T_2V_2$ as in Corollary \ref{construction}, there is a bijection
		$$\Irr_{2'}(P) \to \Irr(T_2)^{W_2} \times \Irr_{2'}(W_2).$$
\end{prop}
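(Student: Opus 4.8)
The plan is to exploit the semidirect structure $P = T_2 \rtimes V_2$, together with the fact that $H = \bH^{vF} \subseteq T_2$ is a $2$-group contained in $T_2$ and that $V_2/H \cong W_2$. First I would observe that $T_2$ is an abelian normal $2$-subgroup of $P$, so by Clifford theory every $\chi \in \Irr_{2'}(P)$ lies over a unique $V_2$-orbit of linear characters $\lambda \in \Irr(T_2)$. Since $[P:T_2] = |W_2|$ is a power of $2$, a character $\chi$ of odd degree must have $\chi(1)$ dividing $|W_2|$ and simultaneously restricting to $T_2$ as a sum of characters in one $V_2$-orbit; the orbit size divides $\chi(1)$, hence is odd, hence (being a power of $2$) equals $1$. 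Therefore $\lambda$ is $V_2$-stable, i.e. $\lambda \in \Irr(T_2)^{V_2} = \Irr(T_2)^{W_2}$ (the last equality because $T_2$ is abelian so $H$ acts trivially by conjugation, and $V_2/H \cong W_2$). This already isolates the first factor.

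Next, for such a $V_2$-stable $\lambda$, I would analyse $\Irr_{2'}(P \mid \lambda)$. Since $T_2$ is abelian and $\lambda$ is $V_2$-invariant, Gallagher-type theory applies once we know $\lambda$ extends to $P$. The key point is that $\lambda$ does extend to $P$: because $T_2$ is an abelian normal Sylow (well, the $2$-part of an abelian normal subgroup) and $P/T_2 \cong W_2$ is a $2$-group while the obstruction to extending lives in $H^2(W_2, \mathbb{C}^\times)$ twisted by the module structure — here the cleanest argument is that $\lambda$, being a linear character of the abelian $2$-group $T_2$ that is $V_2$-invariant, has values that are $2$-power roots of unity, and one checks the relevant cohomology class vanishes, or alternatively one extends $\lambda$ to $T_2 V_2 = P$ directly by using that $H$ is central-enough / that $\lambda|_H$ extends canonically and then pushing through $W_2$. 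Granting an extension $\hat\lambda \in \Irr(P \mid \lambda)$, Gallagher's theorem gives a bijection $\Irr(W_2) \to \Irr(P \mid \lambda)$, $\eta \mapsto \hat\lambda \cdot (\eta \circ \pi)$ where $\pi: P \to P/T_2 \cong W_2$. Under this bijection the degree is $\hat\lambda(1)\eta(1) = \eta(1)$, so $\hat\lambda\cdot(\eta\circ\pi)$ has odd degree if and only if $\eta$ does, yielding $\Irr_{2'}(P \mid \lambda) \leftrightarrow \Irr_{2'}(W_2)$.

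Finally I would assemble these pieces: $\Irr_{2'}(P) = \bigsqcup_{\lambda \in \Irr(T_2)^{W_2}} \Irr_{2'}(P \mid \lambda)$, and the fibrewise bijections above combine to give $\Irr_{2'}(P) \to \Irr(T_2)^{W_2} \times \Irr_{2'}(W_2)$. The main obstacle is the extendibility of the $W_2$-invariant linear character $\lambda$ from $T_2$ to $P$: one must either verify that the obstruction class in $H^2(W_2, \mathbb{C}^\times)$ (with the appropriate action) is trivial — plausible since $\lambda$ takes $2$-power-order values and $W_2$ is a $2$-group, but it needs care — or give an explicit extension using the structure $V_2 = H \rtimes (\text{lift of } W_2)$ from Section~\ref{setup}. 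If a uniform extension argument is unavailable, the fallback is to note that we only need a bijection of \emph{sets}, so it suffices to match the odd-degree characters over each $\lambda$ with $\Irr_{2'}(W_2)$ by a counting/character-degree argument rather than an explicit Gallagher correspondence, though the explicit version is preferable for the equivariance statements needed later.
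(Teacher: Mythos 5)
Your overall strategy (Clifford theory to show the character of $T_2$ below an odd-degree character of $P$ is $W_2$-invariant, then Gallagher to get the second factor) is exactly the paper's, and your first step can even be shortened: $P$ is a $2$-group, so $\Irr_{2'}(P)$ is just the set of linear characters of $P$, whose restrictions to $T_2$ are automatically irreducible and $P$-invariant. However, the step you yourself flag as the main obstacle --- the extendibility of a $W_2$-invariant $\lambda\in\Irr(T_2)$ to $P$ --- is a genuine gap, and none of your proposed workarounds closes it. An invariant linear character of an abelian normal subgroup of a $2$-group need not extend in general (e.g.\ the faithful character of $\Z(Q_8)\leq Q_8$ is invariant but kills no commutators, hence does not extend), so there is no soft cohomological reason for the obstruction in $H^2(W_2,\mathbb{C}^\times)$ to vanish; the relevant cohomology here is honestly twisted by the nontrivial structure of the extension $1\to H\to V_2\to W_2\to 1$ inside the extended Weyl group $\bV$. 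Your counting fallback also cannot work: an odd-degree (hence linear) character of $P$ lying over $\lambda$ \emph{is} an extension of $\lambda$, so if $\lambda$ failed to extend then $\Irr_{2'}(P\mid\lambda)$ would be empty and the claimed bijection would be false. In other words, extendibility is not a convenience for a cleaner proof, it is the actual content of the proposition.

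The paper closes this gap by citing the extension map $\Lambda$ of Malle--Sp\"ath (\cite[Corollary 3.13]{MS}) with respect to $T\lhd N$: identifying $\Irr(T_2)$ with the $2$-power order characters of $T=T_2\times T_{2'}$, one restricts $\Lambda(\lambda)\in\Irr(N_\lambda)$ to $P\leq N_\lambda$ to obtain the required extension. That result is itself proved by an explicit construction of a compatible extension map for $\bH\lhd\bV$ (and then $H\lhd V$), not by a general cohomological argument, so the correct fix for your proof is to import it rather than to try to verify vanishing of the obstruction class by hand. With that citation in place, the rest of your argument (Gallagher over each $\lambda$, degrees multiply by $\eta(1)$, assemble fibrewise) goes through and coincides with the paper's proof.
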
 

\begin{proof}
	Any character of $\Irr_{2'}(P)$ (that is any linear character of $P$) covers a $P$-invariant character of the normal subgroup $T_2$ of $P$. Since $P/T_2 \cong V_2/H \cong W_2$ the statement follows from \cite[Corollary 3.13]{MS} and Gallagher's theorem.
\end{proof}

We conjecture that the result in part (b) holds in general.
Recall as in Section~\ref{DualityChar} that $W_2 ^\ast$ denotes the image of $W_2$  and $T^\ast:=(\bT^\ast)^{F^\ast v^\ast}$ corresponding to $T:=(\bT)^{vF}$ under duality.

\begin{prop}\label{local}
Let $B$ be the principal $2$-block of $\N_{G_0}(Q)$.
Then for $Z:=(T_2^\ast)^{W_2^\ast}$, there is a bijection $\Irr_0(B) \to Z \times \Irr_{2'}(W_2)$.
\end{prop}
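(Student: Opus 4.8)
The plan is to pass from the group $\N_{G_0}(Q)$ inside $G_0 = \bG^F$ to the isomorphic group $\N_G(P)$ inside $G = \bG^{vF}$, using the isomorphism $\iota$ of Lemma \ref{TwistIsom}, and then to work entirely on the $vF$-side where the Sylow $2$-subgroup $P = T_2 V_2$ has the explicit description from Corollary \ref{construction}. Since $P$ is abelian (being a direct product of $2$-groups lying in a torus and its Weyl-group lift... actually one should just note $P$ is abelian because $\Irr_{2'}(P) = \Irr(P)$ appears in Proposition \ref{direct product} as a set of linear characters, hence $P$ is abelian), its normaliser controls fusion and $\N_G(P) = \C_G(P) P$ by Corollary \ref{star property}; moreover by Lemma \ref{Sylow} and the fact that $K \leq \C_G(P)$ with $K$ a $2'$-complement, one has $\N_G(P) = N_{2'} P$ where $N = \N_\bG(\bS)^{vF} = TV$. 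So first I would identify $\N_G(P)$ explicitly: it is $P \rtimes K$ with $K \leq T_{2'}$, hence $\N_G(P) = T V_2$, and $\Irr_0$ of its principal block is the set of height-zero characters of the principal $2$-block of $T V_2$.

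The second step is to identify which characters of $TV_2$ lie in the principal block. Since $T_{2'}$ is a normal $2'$-subgroup (indeed $T = T_2 \times T_{2'}$ with $T_{2'}$ central in... no, $T_{2'}$ is normalised by $V_2$), the principal block of $TV_2 = (T_2 \times T_{2'}) \rtimes V_2$ consists exactly of the characters lying over the trivial character of $T_{2'}$, i.e. characters of $TV_2 / T_{2'} \cong T_2 V_2 / H \rtimes \dots$; more precisely $\Irr_0(B)$ is in bijection with $\Irr(\mathrm{pr.\ block\ of\ } T_2 V_2)$ where $T_2 V_2$ here means the quotient $TV_2/T_{2'} \cong T_2 \cdot (V_2/(H\cap T_{2'}))$. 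One checks $V_2 \cap T_{2'} = 1$ so this quotient is $P$ itself, and all of $\Irr(P)$ lies in the principal block since $P$ is a Sylow $2$-subgroup and $B_0(TV_2)$ has $P$ as defect group with $\Irr_0(B_0(TV_2))$ equal to characters of $TV_2$ trivial on the $2'$-core $O_{2'}(TV_2) = T_{2'}$. Combined with Proposition \ref{direct product}, this already gives $|\Irr_0(B)| = |\Irr(T_2)^{W_2}| \cdot |\Irr_{2'}(W_2)|$, but we need the refined statement with $Z = (T_2^\ast)^{W_2^\ast}$ in place of $\Irr(T_2)^{W_2}$.

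The third and final step is therefore to produce a bijection $\Irr(T_2)^{W_2} \to Z = (T_2^\ast)^{W_2^\ast}$ compatible with the $W_2$-action. This is where duality enters: the duality bijection $\Irr(\bT^{vF}) \to (\bT^\ast)^{F^\ast v^\ast}$ of Section \ref{DualityChar} restricts to a $W_2$/$W_2^\ast$-equivariant bijection between the $2$-parts, sending $\Irr(T_2) = \Irr((\bT^{vF})_2)$ to $(T_2^\ast)$, hence $\Irr(T_2)^{W_2} \to (T_2^\ast)^{W_2^\ast}$; here one uses that duality is an anti-isomorphism $\bW \to \bW^\ast$ but since we only need the bijection to match up fixed points under the (images of the) same Weyl group $W_2$, and $W_2^\ast$ is by definition its image, this is fine. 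Then one feeds Proposition \ref{direct product} through this, replacing the first factor, to obtain $\Irr_0(B) \to Z \times \Irr_{2'}(W_2)$. The main obstacle I expect is the bookkeeping in Step 2 — carefully checking that the principal $2$-block of $\N_G(P) = TV_2$ has exactly the height-zero characters parametrised by $\Irr(P)$ via inflation from $TV_2/T_{2'}$, and that the isomorphism $\iota$ transports the principal block of $\N_{G_0}(Q)$ to that of $\N_G(P)$ (which is automatic since $\iota$ is a group isomorphism, so it preserves principal blocks). The duality step in Step 3 is essentially a citation to \cite[Equation (15.2)]{MarcBook} and the remark following Proposition \ref{conversion}.
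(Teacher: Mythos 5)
Your overall route coincides with the paper's: identify $\Irr_0(B)$ with $\Irr_{2'}(Q)\cong\Irr_{2'}(P)$ using Corollary~\ref{star property}, feed this through Proposition~\ref{direct product}, and replace $\Irr(T_2)^{W_2}$ by $Z=(T_2^\ast)^{W_2^\ast}$ via the duality bijection from the proof of Proposition~\ref{conversion}. Those three ingredients, plus the transport along $\iota$ from Lemma~\ref{TwistIsom}, are exactly what the paper uses, and your Step~3 is correct as stated.

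There are, however, two false intermediate claims in Steps~1--2. First, $P=T_2V_2$ is not abelian in general: Proposition~\ref{direct product} parametrises $\Irr_{2'}(P)$, i.e.\ the linear characters of the $2$-group $P$, not all of $\Irr(P)$. This slip is harmless since you never really use it. Second, and more substantively, $\N_G(P)\ne TV_2$ in general. The complement $K$ satisfies $K\le T_{2'}$ \emph{and} $K\le\C_G(P)$, hence $K\le\C_{T_{2'}}(V_2)$, which is usually a proper subgroup of $T_{2'}$: for $G=\SL_2(q)$ with $q\equiv 1\bmod 8$ (a case allowed in Section~\ref{setup}) the reflection in $V_2$ inverts $T$, so $\C_{T_{2'}}(V_2)=1$ and $\N_G(P)=P$, whereas $TV_2=N$ has order $2(q-1)$. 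Your Step~2 therefore analyses the block theory of the wrong group. It happens to return the correct count, because $TV_2/T_{2'}\cong P$ as well, but that is a coincidence of the two computations rather than a justification. The clean repair is the one the paper makes and which you already have available: from $\N_G(P)=\C_G(P)P$ (Corollary~\ref{star property}) it follows directly from \cite[Theorem 9.12]{NavarroBook} that restriction to $P$ gives a bijection $\Irr_0(B)\to\Irr_{2'}(P)$, with no need to identify $\N_G(P)$ inside $N$ or to discuss $\mathrm{O}_{2'}$ of anything.
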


\begin{proof}
By Corollary \ref{star property} we have $\N_{G_0}(Q)=\C_{G_0}(Q) Q$ and thus by \cite[Theorem 9.12]{NavarroBook} restriction defines a bijection $\Irr_0(B) \to \Irr_{2'}(Q)$. 
As in the proof of Proposition~\ref{conversion}, duality  provides a bijection $\Irr(T)^{W_2} \to (T^\ast)^{W_2^\ast}$, which yields a bijection
	$$\Irr(T_2)^{W_2} \to (T_2^\ast)^{W_2^\ast}.$$
The result thus follows from Proposition \ref{direct product} using that $P \cong Q$.
\end{proof}

\begin{rem}
	Let $P^\ast$ be a Sylow $2$-subgroup of $G^\ast$. As for $G$, it can be obtained as an extension of $T_2^\ast$ by $W_2^\ast$. Therefore, $Z:=(T_2^\ast)^{W_2^\ast}$ is a central subgroup of $P^\ast$. We believe that $Z$ should coincide in most cases with $\Z(P^\ast)$. For instance if $G^\ast$ is of type $A$ then this is the case by \cite[Lemma 13.17(ii)]{MarcBook}.
\end{rem}

\subsection{Global characters}\label{GlobalChar}

This section focuses on the height zero characters of the principal block for $G_0=\bG^F$ as in Section~\ref{setup}.
First we count these characters by counting those in $\bG^{vF}$ using Malle's parametrisation of $2'$-degree characters.

\begin{lm}\label{GlobalCount}
The principal $2$-block of $G=\bG^{vF}$ contains $|Z| \times |\Irr_{2'}(W_2)|$ height zero characters, where $W_2$ and $Z$ are taken from Lemma \ref{E-stable Sylow} and Proposition~\ref{local} respectively.
\begin{proof}
The odd degree characters of $G$ have been parametrised by Malle \cite[Proposition 7.3]{MalleHZ}. 
However, by the proof of \cite[Theorem A]{Enguehard}, the principal $2$-block is the unique unipotent block of maximal defect.
Therefore using Malle's explicit parametrisation, it follows that the height zero characters of the principal block of $G$ are in bijection with pairs $(s,\phi)$, where $s \in Z$ and $\phi \in \Irr_{2'}(W(s))$, where $W(s):=\C_W(s)$.
As $s \in Z$, then $W_2\leq \C_W(s)$ and thus by the main result of \cite{MS} we have a McKay-bijection $\Irr_{2'}(W(s)) \to \Irr_{2'}(W_2)$. 
\end{proof}
\end{lm}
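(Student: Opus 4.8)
The plan is to reduce the count of height-zero characters in the principal $2$-block of $G = \bG^{vF}$ to a combinatorial count over semisimple data, by combining three inputs: (i) Malle's parametrisation of odd-degree characters of $G$, (ii) Enguehard's theorem that the principal $2$-block is the unique unipotent block of maximal defect, so that among Lusztig series the relevant characters are exactly those of maximal defect, and (iii) the Malle--Späth McKay bijection for Weyl groups. First I would invoke \cite[Proposition 7.3]{MalleHZ} to write $\Irr_{2'}(G)$ as a disjoint union, indexed by semisimple classes $[s]$ in the dual group $G^\ast$, of sets parametrised (via Jordan decomposition and the $\ell'$-degree condition) by odd-degree characters of the relative Weyl group $W(s) = \C_W(s)$ attached to $s$. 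The key point is to identify which of these pairs $(s,\phi)$ land in the principal block: by the uniqueness statement from the proof of \cite[Theorem A]{Enguehard}, the principal block is the only unipotent block of maximal defect, and the height-zero characters of the principal block are precisely those $(s,\phi)$ for which the corresponding character has maximal defect, which forces $|G^\ast : \C_{G^\ast}(s)|$ to be odd — equivalently $s$ centralises a Sylow $2$-subgroup of $G^\ast$ — i.e. $s \in Z = (T_2^\ast)^{W_2^\ast}$ after the usual identification of such $s$ (up to conjugacy) with elements of the central subgroup $Z$ of a Sylow $2$-subgroup of $G^\ast$.

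Having pinned down the index set as $Z$, the remaining task is, for each fixed $s \in Z$, to count the pairs $\phi$, namely $|\Irr_{2'}(W(s))|$, and to show this equals $|\Irr_{2'}(W_2)|$. Here I would note that since $s \in Z$ centralises a Sylow $2$-subgroup $P^\ast$ of $G^\ast$ and $W_2^\ast \le \C_{W^\ast}(s) $, the group $W_2$ is a Sylow $2$-subgroup of the relative Weyl group $W(s)$; more precisely $W(s)$ is again (isomorphic to) a Weyl group in the relevant $d$-split setting, so the main theorem of \cite{MS} — that Weyl groups are McKay-good for the prime $2$ — supplies a bijection $\Irr_{2'}(W(s)) \to \Irr_{2'}(W_2)$. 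Summing over $s \in Z$ gives that the principal $2$-block of $G$ has exactly $|Z| \cdot |\Irr_{2'}(W_2)|$ height-zero characters, as claimed.

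The main obstacle I anticipate is the block-theoretic bookkeeping in the middle step: correctly extracting from Malle's parametrisation and from Enguehard's classification of unipotent $2$-blocks the precise statement that the height-zero characters of the \emph{principal} block correspond to exactly those $s$ with $2 \nmid |G^\ast : \C_{G^\ast}(s)|$, and then matching this set of $s$ (a set of semisimple $2'$-classes whose centraliser contains a full Sylow $2$-subgroup) with the fixed central subgroup $Z = (T_2^\ast)^{W_2^\ast}$ of $P^\ast$ up to conjugacy — this uses the explicit construction of $P^\ast$ as an extension of $T_2^\ast$ by $W_2^\ast$ and the fact that all such $s$ are $G^\ast$-conjugate into $T_2^\ast$. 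The verification that $W_2$ is genuinely a Sylow $2$-subgroup of $W(s)$ (not merely a $2$-subgroup) and that $W(s)$ falls within the scope of the Malle--Späth theorem for the relevant twist is a secondary point but should follow from $W_2^\ast \le \C_{W^\ast}(s)$ together with $|W : W_2|$ and $|W(s)|$ order considerations; once those are in place the argument is essentially a triple application of known results.
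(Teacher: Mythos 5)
Your proposal follows essentially the same route as the paper's proof: Malle's parametrisation of odd-degree characters \cite[Proposition 7.3]{MalleHZ}, Enguehard's identification of the principal $2$-block as the unique unipotent block of maximal defect, and the Malle--Sp\"ath McKay bijection $\Irr_{2'}(W(s)) \to \Irr_{2'}(W_2)$ for $s \in Z$. The extra bookkeeping you flag (matching the relevant semisimple classes with $Z$ and checking $W_2$ is a Sylow $2$-subgroup of $W(s)$) is exactly what the paper leaves implicit, so the argument is correct and equivalent.
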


\begin{cor}
	Recall that $\bG$ is simple of simply connected and $F$ is a Frobenius map with $\bG^ F \not\cong \{\Sp_{2n}(q),{}^3 D_4(q) \}$ whenever $q \not \equiv 1 \, \mathrm{mod} \, 8$.
	Then the Alperin--McKay conjecture holds for the principal $2$-block of $\bG^F$.
\end{cor}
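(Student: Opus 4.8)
The plan is to combine the two counting results already established with the reduction theorem of Späth. First I would observe that the Alperin--McKay conjecture for the principal $2$-block of $G_0 = \bG^F$ amounts to the numerical equality
\[
|\Irr_0(B_0(G_0))| = |\Irr_0(\N_{G_0}(Q), B)|,
\]
where $B$ is the Brauer correspondent in $\N_{G_0}(Q)$ (which is necessarily the principal block of the normaliser, since the defect group $Q$ is a Sylow $2$-subgroup and Brauer's third main theorem applies). So the whole statement is purely about comparing two cardinalities.

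Next I would supply the right-hand side. Proposition~\ref{local} gives a bijection $\Irr_0(B) \to Z \times \Irr_{2'}(W_2)$, so $|\Irr_0(\N_{G_0}(Q),B)| = |Z| \cdot |\Irr_{2'}(W_2)|$. For the left-hand side I would use the isomorphism $\iota$ from Lemma~\ref{TwistIsom}: since $\iota$ carries $\bG^F$ onto $\bG^{vF} = G$ (and principal blocks to principal blocks, heights being preserved by group isomorphisms), we get $|\Irr_0(B_0(G_0))| = |\Irr_0(B_0(G))|$. Now Lemma~\ref{GlobalCount} tells us that $|\Irr_0(B_0(G))| = |Z| \cdot |\Irr_{2'}(W_2)|$. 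Comparing the two expressions yields the desired equality, so the Alperin--McKay conjecture holds for the principal $2$-block of $\bG^F$ under the standing hypothesis that $\bG^F \not\cong \Sp_{2n}(q), {}^3D_4(q)$ when $q \not\equiv 1 \bmod 8$.

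One point I would be careful to spell out is why the equalities on both sides involve \emph{the same} set $Z$ and \emph{the same} group $W_2$: $Z = (T_2^\ast)^{W_2^\ast}$ and $W_2$ are intrinsic to $(\bG, F)$ (the twist by $v$ does not change them, precisely because $W_2 \subseteq \C_W(w_0)$ and $v^\ast \in W_2^\ast$, as exploited already in Propositions~\ref{conversion} and~\ref{connected Weyl}), so there is genuinely a single pair of cardinalities appearing on both sides rather than two coincidentally equal numbers. I would also note that Brauer correspondence here is unambiguous: $B$ is the unique block of $\N_{G_0}(Q)$ with defect group $Q$ whose Brauer correspondent is $b = B_0(G_0)$, and it is the principal block by Brauer's third main theorem.

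The proof is therefore essentially a bookkeeping argument, so there is no real obstacle at this stage — the mathematical content has already been front-loaded into Lemma~\ref{GlobalCount} (which in turn rests on Malle's parametrisation \cite{MalleHZ}, Enguehard's result that the principal block is the unique unipotent block of maximal defect \cite{Enguehard}, and the Malle--Späth McKay bijection \cite{MS}) and into Proposition~\ref{local} (which rests on Corollary~\ref{star property} giving $\N_{G_0}(Q) = \C_{G_0}(Q)Q$, together with the structural description of $P$). If anything required care it would be confirming that the hypotheses of all these cited results are met under the blanket assumption in Section~\ref{setup}, and that passing through $\iota$ does not disturb block-theoretic invariants; both are routine.
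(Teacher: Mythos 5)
Your proposal is correct and follows exactly the paper's own (one-line) proof, which simply cites Proposition~\ref{local} and Lemma~\ref{GlobalCount}; your additional remarks about transporting the count from $\bG^{vF}$ to $\bG^F$ via $\iota$ and identifying the two occurrences of $Z$ and $W_2$ just make explicit what the paper leaves implicit.
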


\begin{proof}
	This follows from Proposition \ref{local} and Theorem \ref{GlobalCount}.
\end{proof}

Define
	$$\mathcal{P}_0:= \{(\lambda_0,\eta_0) \mid \lambda_0 \in \Irr(T_0) \text{ and } \eta_0 \in \Irr_{2'}(W_0(\lambda_0)) \},$$
where $W_0(\lambda_0) :=(N_0)_{\lambda_0}/T_0$.
From the proof of \cite[Theorem 6.3]{MS} there is a surjective map onto the principal Harish-Chandra series
	$$\begin{array}{clcl}
           \Pi_0: & \mathcal{P}_0 &  \to & \displaystyle\bigcup_{\lambda_0 \in \Irr(T_0) } \mathcal{E}(G_0,(T_0,\lambda_0))\\
 &  (\lambda_0,\eta_0)  & \mapsto & R_{T_0}^{G_0}(\lambda_0)_{\eta_0}\\
           \end{array}$$
which becomes injective on $W_0$-orbits. 

The main aim is to find a suitable subset of $\mathcal{P}_0$ to parametrise the height zero characters of the principal block $b$ of $G_0=\bG^F$.
If $R_{T_0}^{G_0}(\lambda_0)_{\eta_0}$ has $2'$-degree, then  by \cite[Lemma 8.9]{MS} it follows that $2 \nmid |W_0:W_0(\lambda_0)|$.
In other words, $W_0(\lambda_0)$ contains a Sylow $2$-subgroup of $W_0$. 
Furthermore, the principal block of $G_0$ is a subset of $\mathcal{E}_{2'}(G_0,1)$, see \cite[Theorem 9.12(a)]{MarcBook}.
However for $s_0 \in T_0^\ast$ in duality with $\lambda_0 \in \Irr(T_0)$, it follows that $s_0$ has $2$-power order if and only if $\lambda_0$ has $2$-power order.
Therefore if $\chi\in \Irr(b)$ lies in $\mathcal{E}(G_0,(T_0,\lambda_0))$, then $\lambda_0$ must have $2$-power order.
Via the decomposition $T_0=(T_0)_2 \times (T_0)_{2'}$, the $2$-power order characters coincide with the set $\Irr((T_0)_2)$, which can be viewed as the characters of $T_0$ with $(T_0)_{2'}$ in their kernel.
Thus for $W_2$ the fixed Sylow $2$-subgroup of $W$ from  Lemma \ref{E-stable Sylow} define 
$$(\mathcal{P}_0)_2:= \{(\lambda_0,\eta_0)\in  \mathcal{P}_0 \mid \lambda_0 \in \Irr((T_0)_2)^{W_2} \}$$
and set $\Pi_{\mathrm{glo}}$ to be the restriction of $\Pi_0$ to $(\mathcal{P}_0)_2$.

\begin{thm}\label{global}
Let $b$ be the principal $2$-block of $G_0$.
Then the map $\Pi_{\mathrm{glo}}$ yields a bijection
	$$\Pi_{\mathrm{glo}}:(\mathcal{P}_0)_2\to \Irr_0(b).$$	
\end{thm}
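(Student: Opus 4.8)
The plan is to reduce the statement to the numerical and structural facts already assembled: first count the left-hand side, then exhibit the map $\Pi_{\mathrm{glo}}$ as injective. For the count, observe that by Theorem~\ref{GlobalCount} (applied to $G=\bG^{vF}$) together with the isomorphism $(\bG^F\rtimes E)/\langle\hat F\rangle\cong(\bG^{vF}\rtimes E)/\langle v\hat F\rangle$ coming from Lemma~\ref{TwistIsom}, the principal $2$-block $b$ of $G_0=\bG^F$ has exactly $|Z|\cdot|\Irr_{2'}(W_2)|$ height zero characters. (The twisting isomorphism $\iota$ matches the principal block of $G_0$ with that of $G$ and preserves heights, so the count transfers.) Hence it suffices to show that $|(\mathcal{P}_0)_2|=|Z|\cdot|\Irr_{2'}(W_2)|$ and that $\Pi_{\mathrm{glo}}$ lands in $\Irr_0(b)$ and is injective.

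\textbf{Counting the source.} First I would show $|(\mathcal P_0)_2|=|Z|\cdot|\Irr_{2'}(W_2)|$. An element of $(\mathcal P_0)_2$ is a pair $(\lambda_0,\eta_0)$ with $\lambda_0\in\Irr((T_0)_2)^{W_2}$ and $\eta_0\in\Irr_{2'}(W_0(\lambda_0))$, where $W_0(\lambda_0)=(N_0)_{\lambda_0}/T_0$. Via the duality bijection $\Irr((T_0)_2)\to(T_0^\ast)_2$ of Proposition~\ref{local}, the set $\Irr((T_0)_2)^{W_2}$ corresponds to $Z=(T_2^\ast)^{W_2^\ast}$; moreover for $\lambda_0\leftrightarrow s_0$ one has $W_0(\lambda_0)\cong \C_{W_0}(s_0)=W_0(s_0)$, and since $s_0\in Z$ this group contains a Sylow $2$-subgroup conjugate to $W_2$. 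By the main theorem of \cite{MS} there is a McKay bijection $\Irr_{2'}(W_0(s_0))\to\Irr_{2'}(W_2)$ for each such $s_0$, so summing over $\lambda_0$ gives $|(\mathcal P_0)_2|=|Z|\cdot|\Irr_{2'}(W_2)|$. This already matches the target cardinality, so the theorem will follow once injectivity and the "lands in $\Irr_0(b)$" claim are established.

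\textbf{The map is well-defined into $\Irr_0(b)$ and injective.} By construction $\Pi_0$ is surjective onto the principal Harish-Chandra series and becomes injective on $W_0$-orbits; so $\Pi_{\mathrm{glo}}$ is injective once I check that distinct elements of $(\mathcal P_0)_2$ cannot lie in the same $W_0$-orbit, i.e.\ that each $W_0$-orbit on $\mathcal P_0$ meets $(\mathcal P_0)_2$ in at most one point. If $(\lambda_0,\eta_0)$ and $({}^w\lambda_0,{}^w\eta_0)$ both have $\lambda_0,{}^w\lambda_0\in\Irr((T_0)_2)^{W_2}$, then $w$ conjugates one Sylow-$2$-stable character to another, and a Sylow argument inside $W_0$ (both $W_2$ and ${}^{w^{-1}}W_2$ are Sylow $2$-subgroups of $W_0(\lambda_0)$, hence conjugate there) lets me replace $w$ by an element of $W_0(\lambda_0)$, giving $({}^w\lambda_0,{}^w\eta_0)=(\lambda_0,\eta_0)$; this is where I expect to spend the most care. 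For the image: if $(\lambda_0,\eta_0)\in(\mathcal P_0)_2$ then $\lambda_0$ has $2$-power order, so the dual element $s_0$ is a $2$-element and $\Pi_{\mathrm{glo}}(\lambda_0,\eta_0)=R_{T_0}^{G_0}(\lambda_0)_{\eta_0}\in\mathcal E(G_0,(T_0,\lambda_0))\subseteq\mathcal E_{2'}(G_0,1)$, which by \cite[Theorem 9.12(a)]{MarcBook} is exactly the union of unipotent blocks; combined with \cite[Lemma 8.9]{MS} (which forces $W_0(\lambda_0)\supseteq$ a Sylow $2$-subgroup of $W_0$ precisely when the degree is odd) one sees that $R_{T_0}^{G_0}(\lambda_0)_{\eta_0}$ has $2'$-degree iff $\eta_0$ has $2'$-degree, so every member of $\Pi_{\mathrm{glo}}((\mathcal P_0)_2)$ is an odd-degree unipotent character, hence lies in the principal block (the unique unipotent block of maximal defect, by \cite{Enguehard}) with height zero. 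Thus $\Pi_{\mathrm{glo}}$ is an injection $(\mathcal P_0)_2\hookrightarrow\Irr_0(b)$ between two sets of the same finite cardinality, hence a bijection.

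\textbf{Main obstacle.} The delicate point is the injectivity on $(\mathcal P_0)_2$, i.e.\ checking that a $W_0$-orbit cannot contain two distinct pairs whose first coordinates are both in $\Irr((T_0)_2)^{W_2}$; this requires the Sylow-conjugacy argument within the stabiliser $W_0(\lambda_0)$ and the compatibility of the $\eta_0$-labels under that conjugation, and it is the step where the specific choice of the $E$-stable $W_2$ from Lemma~\ref{E-stable Sylow} is genuinely used. Everything else is a matter of combining the cited counting results (\cite{MalleHZ}, \cite{Enguehard}, \cite{MS}) with the duality dictionary of Section~\ref{DualityChar}.
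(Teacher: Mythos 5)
Your overall skeleton (count both sides, then show $\Pi_{\mathrm{glo}}$ is injective) is close to the paper's, and two of your three ingredients are sound: the count $|(\mathcal{P}_0)_2|=|Z|\cdot|\Irr_{2'}(W_2)|$ via duality and the McKay bijection for $W_0(\lambda_0)$ is exactly the paper's computation, and your Sylow/self-normalising argument that a $W_0$-orbit meets $(\mathcal{P}_0)_2$ in at most one point is correct and is the same use of Lemma \ref{self normalising} as in the paper. The gap is in the step ``$\Pi_{\mathrm{glo}}$ lands in $\Irr_0(b)$''. You assert, citing \cite[Lemma 8.9]{MS}, that $R_{T_0}^{G_0}(\lambda_0)_{\eta_0}$ has odd degree \emph{if and only if} $\eta_0$ does (given $\lambda_0\in\Irr((T_0)_2)^{W_2}$). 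That lemma, as used in this paper, only yields the implication you do not need: odd degree forces $2\nmid|W_0:W_0(\lambda_0)|$. The converse — that every pair in $(\mathcal{P}_0)_2$ produces an odd-degree constituent — is false without the standing hypotheses of Section \ref{setup}: Example \ref{example} ($\SL_2(q)$, $q\equiv 3 \,\mathrm{mod}\, 4$) exhibits four pairs in $(\mathcal{P}_0)_2$ of which only two yield characters of $2'$-degree, even though all the cardinalities $|Z|\cdot|\Irr_{2'}(W_2)|=|(\mathcal{P}_0)_2|=|\Irr_0(b)|=4$ still agree there. So the parity of $R_{T_0}^{G_0}(\lambda_0)_{\eta_0}(1)$ genuinely depends on the arithmetic of $q$ and the type, and cannot be read off from $\eta_0(1)$ and $|W_0:W_0(\lambda_0)|$ alone; your proof as written would ``prove'' the theorem in the excluded cases, where it fails.

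The paper circumvents this by arguing in the opposite direction: by \cite[Theorem 3.3]{MalleGalois} every character of $\Irr_0(b)$ lies in the principal ($1$-)Harish-Chandra series, and after a $W_0$-conjugation its label can be taken in $(\mathcal{P}_0)_2$; uniqueness of that label (your injectivity argument) then gives an injection $\Irr_0(b)\hookrightarrow(\mathcal{P}_0)_2$, and the equality of cardinalities forces this to be a bijection, so the inclusion $\Pi_{\mathrm{glo}}((\mathcal{P}_0)_2)\subseteq\Irr_0(b)$ is obtained a posteriori rather than by a degree computation. To repair your argument you must either import \cite[Theorem 3.3]{MalleGalois} and run the inclusion in that direction, or actually prove the degree statement under the hypotheses of Section \ref{setup}, which is a nontrivial computation and not a consequence of the references you cite.
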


\begin{proof}

Every character of $\Irr_0(b)$ lies in the principal Harish-Chandra series by \cite[Theorem 3.3]{MalleGalois}. 
That is $\Irr_0(b)\subset \Pi_0(\mathcal{P}_0)$.
If $\chi=R_{T_0}^{G_0}(\lambda_0)_{\eta_0}\in \Irr_0(b)$, then as in the paragraph above, it follows that there is some $W_0$-conjugate $(\lambda_0',\eta_0')$ of $(\lambda_0,\eta_0)$ with $\chi=R_{T_0}^{G_0}(\lambda_0')_{\eta_0'}$ and $\lambda_0'\in  \Irr((T_0)_2)^{W_2}$; in other words $(\lambda_0',\eta_0')\in (\mathcal{P}_0)_2$.
Moreover, as $W_2$ is self-normalising in $W_0$ (Lemma \ref{self normalising}), it follows that  $\lambda_0'$ must be the unique character in its $W_0$-orbit with $W_2\subseteq W_0(\lambda_0')$. 
Hence $\Irr_0(b)\subseteq \Pi_{\mathrm{glo}}((\mathcal{P}_0)_2)$ and each $\chi  \in \Irr_0(b)$ has a unique preimage in $(\mathcal{P}_0)_2$ under $\Pi_{\mathrm{glo}}$.

It remains to show that $\Pi_{\mathrm{glo}}$ is indeed a bijection as stated in the theorem.
By Lemma~\ref{GlobalCount},  it suffices to show that $|(\mathcal{P}_0)_2|=|Z|\times |\Irr_{2'}(W_2)|$.
From the proof of Proposition~\ref{conversion}, there is a bijection $$\Irr((T_0)_2)^{W_2}\to (T_0^\ast)_2^{W_2^\ast}= (T_2^\ast)^{W_2^\ast}=:Z.$$
Furthermore, for each $\lambda_0\in (\mathcal{P}_0)_2$, there is a McKay-bijection $\Irr_{2'}(W_0(\lambda_0)) \to \Irr_{2'}(W_2)$ by the main result of \cite{MS}.
Thus $|(\mathcal{P}_0)_2|=|Z||\Irr_{2'}(W_2)|$.
\end{proof}

\begin{example}\label{example}
	Consider $G=\SL_2(q)$ and assume that $q \equiv 3 \, \mathrm{mod} \, 4$. Recall that this case was excluded in Section \ref{setup}. The principal $2$-block $b$ of $G$ has four height zero characters. There are four characters in the principal $1$-Harish-Chandra series corresponding to characters in $(\mathcal{P}_0)_2$, but only two of them are of $2'$-degree. On the other hand, all four $2'$-degree characters of $b$ lie in the principal $2$-Harish-Chandra series. 
\end{example}

\begin{rem}\label{hz characters}
Assume that $\bG$ is not of type $A_n$, $D_{2n+1}$, $n >1$, or $E_6$ so that the longest element $w_0 \in \bW$ acts by inversion on the torus $\bT$. It follows from the remarks after \cite[Lemma 8.5]{MS} that all $2'$-characters lie in the union of Lusztig series $\mathcal{E}(G_0,s)$ with $s$ of $2$-power order. By the proof of \cite[Theorem A]{Enguehard}, the principal $2$-block is the unique unipotent block of maximal defect. Hence, in these cases the Alperin--McKay conjecture for the principal $2$-block is tantamount to the McKay conjecture for the prime $2$.
\end{rem}

\section{Action of automorphisms}

One of the key steps in the proof of Theorem \ref{global} was the existence of a McKay-bijection $\Irr_{2'}(W_0(\lambda)) \to \Irr_{2'}(W_2)$. We will now construct such a bijection with suitable equivariance properties. For this we need the following lemma, whose proof follows \cite[Lemma 2.1]{notLie}. 

\begin{lm}\label{McKay}
	Let $H$ be a finite group and $A \subset \mathrm{Aut}(H)$ a cyclic group of automorphisms stabilizing the normaliser $M$ of a Sylow $2$-subgroup of $H$. Then there exists an $A$-equivariant McKay bijection $\Irr_{2'}(H) \to \Irr_{2'}(M)$.
\end{lm}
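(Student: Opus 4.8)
The plan is to bootstrap from the McKay conjecture for the prime $2$, which by \cite{MS} is a theorem for every finite group, to an equivariant statement. It suffices to show that the $A$-sets $\Irr_{2'}(H)$ and $\Irr_{2'}(M)$ are isomorphic. Since $A$ is cyclic, two finite $A$-sets are isomorphic as soon as they have the same number of fixed points under every subgroup $C\le A$ (Möbius inversion over the subgroup lattice of $A$, that is, invertibility of the table of marks). So I would reduce to proving $|\Irr_{2'}(H)^C|=|\Irr_{2'}(M)^C|$ for each $C\le A$.

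Fix $C\le A$. Since $P\lhd \N_H(P)=M$, the Sylow $2$-subgroup $P$ is the unique, hence characteristic, Sylow $2$-subgroup of $M$, so the action of $C$ on $M$ restricts to an action on $P$; in particular $H\rtimes C$ and $M\rtimes C$ are defined, and $S:=P\rtimes R$, where $R$ is the Sylow $2$-subgroup of $C$, is a Sylow $2$-subgroup of $M\rtimes C$ and, by order, also one of $H\rtimes C$. Conjugating the subgroup $P\times\{1\}$ of $S$ by an element $(h,c)$ of $\N_{H\rtimes C}(S)$ and using that $C$ normalises $P$ forces $hPh^{-1}=P$, so $h\in M$ and thus $\N_{H\rtimes C}(S)\le M\rtimes C$; hence $\N_{H\rtimes C}(S)=\N_{M\rtimes C}(S)$. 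Applying the McKay conjecture for the prime $2$ to the finite groups $H\rtimes C$ and $M\rtimes C$ now yields
$$|\Irr_{2'}(H\rtimes C)|=|\Irr_{2'}(\N_{H\rtimes C}(S))|=|\Irr_{2'}(\N_{M\rtimes C}(S))|=|\Irr_{2'}(M\rtimes C)|.$$

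The last step is to convert this into the required fixed-point equality by Clifford theory. As $C$ is cyclic, every $C$-invariant $\chi\in\Irr(H)$ extends to $H\rtimes C$, and over such a $\chi$ there lie exactly $|C|$ characters of $H\rtimes C$, all of degree $\chi(1)$, by Gallagher's theorem; over a non-invariant $\chi$, with inertia group $H\rtimes C_\chi$, there lie $|C_\chi|$ characters, all of degree $[C:C_\chi]\chi(1)$ (no ramification, as $C_\chi$ is cyclic). Reading off which of these are of odd degree expresses $|\Irr_{2'}(H\rtimes C)|$, and likewise $|\Irr_{2'}(M\rtimes C)|$, as one and the same linear function of the orbit-type data of the $C$-set $\Irr_{2'}(H)$, respectively $\Irr_{2'}(M)$; this function equals $|C|\cdot|\Irr_{2'}(H)^C|$ when $C$ is a $2$-group, and in general it is an invertible transform of that data. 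Comparing across all $C\le A$ then gives $\Irr_{2'}(H)\cong\Irr_{2'}(M)$ as $A$-sets, which is the claim. I expect the main obstacle to be precisely this last bookkeeping together with the identification of $\N_{H\rtimes C}(S)$: when $|A|$ is not a power of $2$, an odd-degree character of $H\rtimes C$ need not lie over a $C$-invariant character of $H$, so $|\Irr_{2'}(H\rtimes C)|$ is not simply $|C|\cdot|\Irr_{2'}(H)^C|$, and one must carry the full $C$-orbit structure (or run the argument by induction on $|A|$, reducing to the $2$-group case). This is exactly what \cite[Lemma 2.1]{notLie} organises, and the proof would follow it.
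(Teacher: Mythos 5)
Your proposal is correct and is essentially the paper's argument: both count the odd-degree characters of $H$ and of $M$ according to the orders of their stabilizers in $A$, apply the (known) McKay conjecture for the prime $2$ to the semidirect products $H\rtimes C$ and $M\rtimes C$ after identifying their Sylow $2$-normalisers, and deduce equality of the two stabilizer-count vectors from an invertible linear system supplied by Clifford theory. The only difference is organisational: the paper inducts on $|A|$, treating the cases $2\nmid |A|$ and $|A|$ a $2$-power separately and combining them via $A=A_2\times A_{2'}$, whereas you invert the full system over all subgroups $C\le A$ at once (the relevant matrix is indeed invertible, being a tensor product of a triangular matrix for the $2$-part with a Smith-type $\gcd$ matrix for the odd part).
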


\begin{proof}
According to the main result of \cite{MS} there exists such a McKay bijection. We only need to show that it can be chosen to be $A$-equivariant. For $i \mid r:=|A|$ let $a_i$ (resp. $b_i$) be the number of $\theta \in \Irr_{2'}(H)$ (resp. $\theta \in \Irr_{2'}(M)$) with $|A_{\theta}|=i$. As $A$ is cyclic, it suffices to show that $a_i=b_i$ for all $i \mid r$. Let $q$ be a prime dividing $r$ and set $s:=r/q$. By induction on $r$ we can assume that $a_i=b_i$ for all $i \notin \{r,s\}$ and $$a_{s}+a_r=b_{s}+b_r.$$
Let us first assume that $2 \nmid r$. By Clifford theory we have $|\Irr_{2'}(HA)|=\sum_{i \mid r} a_i i^2/r$
and similarly $|\Irr_{2'}(MA)|=\sum_{i \mid r} b_i i^2/r$. Since the McKay-conjecture holds for $HA$ we have $|\Irr_{2'}(HA)|=|\Irr_{2'}(MA)|$ and so $$a_s s^2/r+a_r r= b_s s^2/r +b_r r.$$
We therefore have two homogeneous linear equations in the variables $a_{s}-b_{s}$ and $a_r -b_r$. As the associated coefficient matrix is invertible we deduce that $a_{s}=b_{s}$ and $a_r=b_r$. Let's now suppose that $r$ is a power of $2$. In that case, we obtain $|\Irr_{2'}(HA)|=a_r r=|\Irr_{2'}(MA)|=b_r r$. We again deduce that $a_{s}=b_{s}$ and $a_r=b_r$. The general case follows now by using the decomposition $A=A_2 \times A_{2'}$ and coprime arguments.
\end{proof}

\begin{rem}
We note that the existence of an automorphism-equivariant McKay-bijection should also follow from a similar statement as \cite[Theorem B]{JEMS}. As we only need the result in the case of a cyclic automorphism group we have decided not to pursue this.
\end{rem}

Take $T_0=\bT^F$ and $N_0=\N_{G_0}(\bS)$ as in Section~\ref{setup}.
For $\tilde \lambda_0 \in \Irr(\tilde T_0)$ denote $W_0(\tilde{\lambda}_0) :=(N_0)_{\tilde\lambda_0}/T_0$.
Note that if $\tilde \lambda_0 \in \Irr(\tilde T_0\mid \lambda_0)$ for some $\lambda_0\in \Irr(T_0)$, then the factor group $W_0(\lambda_0)/W_0(\tilde{\lambda}_0)$ is an abelian group by the proof of \cite[Proposition 3.16]{MS}.

\begin{lm}\label{equivariance}
	Let $\lambda_0 \in \Irr((T_0)_2)^{W_2}$ and let $\tilde{\lambda}_0 \in \Irr((\tilde{T}_0)_2 \mid \lambda_0)$. Then there exist an $E_{\lambda_0}$-equivariant bijection $$f_{\lambda_0}:\Irr_{2'}(W_0(\lambda_0)) \to \Irr_{2'}(W_2)$$ such that $f_{\lambda_0}(\eta_0 \mu_0)=f_{\lambda_0}(\eta_0) \Res^{W_0(\lambda_0)}_{W_2}(\mu_0)$ for every character $\eta_0 \in \Irr_{2'}(W_0(\lambda_0))$ and $\mu_0 \in \Irr(W_0(\lambda_0) / W_0(\tilde \lambda_0))$.
\end{lm}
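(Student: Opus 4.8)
The strategy is to apply Lemma~\ref{McKay} to a group closely related to $W_0(\lambda_0)$ in order to get an equivariant McKay bijection, and then to upgrade this to the required bijection with the compatibility property under tensoring by $\Irr(W_0(\lambda_0)/W_0(\tilde\lambda_0))$. First I would record that $W_2$ is the normaliser of a Sylow $2$-subgroup of itself inside $W_0(\lambda_0)$: indeed $W_2 \subseteq W_0(\lambda_0)$ since $\lambda_0 \in \Irr((T_0)_2)^{W_2}$, and since $W_2$ contains a Sylow $2$-subgroup of $W_0$ (because $R_{T_0}^{G_0}(\lambda_0)_{\eta_0}$ has been forced to have $2'$-degree, equivalently by the very definition of $(\mathcal{P}_0)_2$ and Lemma~\ref{self normalising}), it contains a Sylow $2$-subgroup of $W_0(\lambda_0)$, and $W_2$ is self-normalising in $W_0$ hence in $W_0(\lambda_0)$. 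So the normaliser $M$ of a Sylow $2$-subgroup of $H := W_0(\lambda_0)$ can be taken to be $W_2$ itself.

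Next I would identify the acting automorphism group. The stabiliser $E_{\lambda_0}$ acts on $N_0$, hence on $W_0(\lambda_0) = (N_0)_{\lambda_0}/T_0$, and it stabilises $W_2$ by Corollary~\ref{construction} (recall $E$ stabilises $N$, $T$, $V$, hence the image $W_2$ of $V_2$ in $W$, and the analogous statement for the subscript-$0$ groups). The group $E_{\lambda_0}$ need not be cyclic, but $E = \C_{em} \times E_1$ with $E_1$ of order dividing $6$ (in fact at most the symmetry group of a Dynkin diagram), and one handles the non-cyclic case by the standard device already used in the proof of Lemma~\ref{McKay}: decompose the action, or simply invoke that the only non-cyclic case is $E_1 \cong \Symm_3$ for type $D_4$, where one can run the counting argument of Lemma~\ref{McKay} prime by prime, or appeal directly to a more general equivariance statement. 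So Lemma~\ref{McKay} (applied with $A$ a cyclic subgroup generating $E_{\lambda_0}$ modulo its $2$-part, then combined via $A = A_2 \times A_{2'}$ as in that lemma's proof) yields an $E_{\lambda_0}$-equivariant McKay bijection $\Irr_{2'}(W_0(\lambda_0)) \to \Irr_{2'}(W_2)$.

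It remains to arrange the tensoring compatibility $f_{\lambda_0}(\eta_0\mu_0) = f_{\lambda_0}(\eta_0)\Res^{W_0(\lambda_0)}_{W_2}(\mu_0)$ for $\mu_0 \in \Irr(W_0(\lambda_0)/W_0(\tilde\lambda_0))$. The key point is that $A := W_0(\lambda_0)/W_0(\tilde\lambda_0)$ is abelian (proof of \cite[Proposition 3.16]{MS}), so $\Irr(A)$ is a group acting on $\Irr_{2'}(W_0(\lambda_0))$ by tensoring; since $W_2$ contains a Sylow $2$-subgroup of $W_0(\lambda_0)$, the composite $W_2 \hookrightarrow W_0(\lambda_0) \twoheadrightarrow A$ is surjective, so $\Res^{W_0(\lambda_0)}_{W_2}$ is injective on $\Irr(A)$ and $\Irr(A)$ acts compatibly on $\Irr_{2'}(W_2)$ as well. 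I would then choose orbit representatives of $\Irr_{2'}(W_0(\lambda_0))$ under the combined action of $E_{\lambda_0}$ and $\Irr(A)$ — these two actions commute because $E_{\lambda_0}$ normalises $W_0(\tilde\lambda_0)$ (one picks $\tilde\lambda_0$ compatibly, or passes to a further orbit sum over $\Irr(\tilde T_0 \mid \lambda_0)$ as in \cite[Section 3]{MS}) — define $f_{\lambda_0}$ on representatives using the bijection from the previous paragraph, and extend it equivariantly over the two group actions. The stabiliser-count matching needed to make this well-defined follows from the same Clifford-theoretic argument as in Lemma~\ref{McKay}, now run inside $W_0(\lambda_0)\rtimes(E_{\lambda_0}\times\Irr(A))$.

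\textbf{Main obstacle.} The delicate step is the last one: ensuring that one can \emph{simultaneously} match $E_{\lambda_0}$-stabilisers and make the bijection strictly multiplicative under $\Irr(A)$, rather than just equivariant under the full product action. This requires knowing that the McKay bijection for Weyl groups from \cite{MS} can be chosen compatibly with the linear-character action — essentially a "with linear characters" strengthening of the Weyl-group McKay bijection — or else circumventing it by a counting argument over $\Irr(A)$-orbits that tracks $E_{\lambda_0}$-stabilisers within each orbit. I would expect the cleanest route is to first fix the $\Irr(A)$-equivariant structure (which is automatic once we know $W_2$ surjects onto $A$ and pick one bijection per $\Irr(A)$-orbit), and only then impose $E_{\lambda_0}$-equivariance by the counting argument of Lemma~\ref{McKay} applied orbit-wise, checking that the orbit sizes and stabiliser types are preserved — the verification that the relevant coefficient matrices remain invertible is where the real work lies, though it is ultimately routine given the structure of $E_{\lambda_0}$.
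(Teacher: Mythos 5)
Your setup is right (the inclusion $W_2\subseteq W_0(\lambda_0)$, self-normalisation, the free action of $\Irr(W_0(\lambda_0)/W_0(\tilde\lambda_0))$ by tensoring on both sides), but the step you yourself flag as the ``main obstacle'' --- simultaneously matching $E_{\lambda_0}$-stabilisers and enforcing multiplicativity under $\Irr(A)$ --- is a genuine gap, not a routine verification. The problem is the mixed stabiliser elements: if ${}^\sigma\eta_0=\eta_0\mu$ for some nontrivial $\mu\in\Irr(A)$, your orbit-representative construction needs ${}^\sigma f(\eta_0)=f(\eta_0)\Res(\mu)$, and no counting argument in the style of Lemma~\ref{McKay} produces this: $\Irr(A)$ acts by tensoring, not by automorphisms, so there is no group $W_0(\lambda_0)\rtimes(E_{\lambda_0}\times\Irr(A))$ to run Clifford theory in, and even if there were, matching character counts for every subgroup of a non-cyclic product is far beyond what the cyclic-group trick of Lemma~\ref{McKay} delivers. (A smaller point: you only record that $A$ is abelian, but you need that it is a $2$-group --- this is what makes $\eta_0$ restrict irreducibly to $W_0(\tilde\lambda_0)$, makes the Gallagher action free, and makes $W_2$ surject onto $A$.)

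The paper closes this gap with two structural facts you do not use. Outside type $D_{2n}$, Lemma~\ref{longest element} shows $E$ acts on $W_0$ by conjugation by $w_0$, and $w_0\in\Z(W_2)\subseteq W_0(\lambda_0)$, so every character of $W_0(\lambda_0)$ and of $W_2$ is $E_{\lambda_0}$-stable and the equivariance requirement is vacuous; one only has to match the free $\Irr(A)$-orbits, which is immediate. In type $D_{2n}$ (where the graph automorphism is genuinely outer on $W_0$) the paper uses the decomposition $W_0(\lambda_0)=W_0(\tilde\lambda_0)\rtimes A_0$ with $A_0$ an $E_0$-stable $2$-group and the canonical determinant extensions of \cite[Lemma 6.24]{Isaacs}: these give an $E_0$-stable transversal for the $\Irr(A)$-action on both sides, reducing the problem to an $E_0$-equivariant bijection at the level of $W_0(\tilde\lambda_0)$ and $W_2(\tilde\lambda_0)$, which does come from Lemma~\ref{McKay} because $E/\C_E(W_0)$ is cyclic there (type $D_4$ being handled separately by hand, since $|W_0:W_2|=3$ forces $W_0(\lambda_0)\in\{W_2,W_0\}$). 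You would need to supply an argument of this kind --- some canonical, automorphism-compatible choice of $\Irr(A)$-orbit representatives --- to make your outline into a proof.
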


\begin{proof}
	
	The group 
	$$W_0(\lambda_0)/W_0(\tilde \lambda_0)= \{ w\in W_0 \mid {}^w \tilde \lambda_0 =\tilde \lambda_0 \otimes \nu_0 \text{ some } \nu_0 \in \Irr(\tilde T_0 /T_0)\}/W_0(\tilde \lambda_0)$$
	is always a $2$-group since $\tilde{\lambda}_0$ has $2$-power order and $W_0$ acts trivially on $\tilde{T}_0/T_0$. As $\eta_0$ is a $2'$-character and the quotient is a $2$-group it follows that $\eta_0$ restricts irreducibly to $W_0(\tilde \lambda_0)$. By Gallagher's theorem, the group $\Irr(W_0(\lambda_0)/W_0(\tilde \lambda_0))$ acts fixed point freely on the orbit of $\eta_0 \in \Irr(W_0(\lambda_0))$. On the other hand, every character of $\Irr_{2'}(W_2)$ is linear and thus restricts irreducibly to $W_2(\tilde \lambda_0):=W_2 \cap W(\tilde \lambda_0)$.
	
	Let us first assume that $G_0$ is not of type $D_{2n}(q)$.
	Denote by $E_0$ the stabiliser of $\lambda_0$ in $E$. Observe that $E$ acts by inner automorphisms on $W_0$ and centralises $W_2$ by Lemma \ref{longest element}. In particular, every character of $W_0(\lambda_0)$ and $W_2$ is $E_0$-stable in this case. 	Since the Sylow $2$-subgroup $W_2$ is self-normalising in $W_0$ there exist a McKay bijection $\Irr_{2'}(W_0(\lambda_0)) \to \Irr_{2'}(W_2)$. By the previous discussion it's now easy to construct a bijection $f_{\lambda_0}: \Irr_{2'}(W_0(\lambda_0)) \to \Irr_{2'}(W_2)$ with the required properties.

	Let us now assume that $G_0$ is of type $D_{2n}(q)$.  We use the notation of the proof of \cite[Theorem 3.17]{MS}. Let $\Phi(\tilde\lambda_0)$ be the root system associated to the Weyl group $W_0(\tilde\lambda_0)$. There exists an $E_0$-stable base $\Delta_0$ of $\Phi(\tilde\lambda_0)$. Denote $A_0:= \mathrm{Stab}_{W_0}(\Delta_0)$ which is $E_0$-stable as $\Delta_0$ is. By the proof of \cite[Theorem 3.17]{MS}, $W_0(\lambda_0)=W_0(\tilde \lambda_0) \rtimes A_0$. Moreover, $A_0$ is a $2$-group as already observed above. Let $\eta_0 \in \Irr(W_0(\tilde \lambda_0))$ which extends to a $2'$-character of $W_0(\lambda_0)$ and set $\delta_0:=\mathrm{det}(\eta_0)$. By \cite[Lemma 6.24]{Isaacs} there exists a unique extension $\hat \eta_0 \in \Irr(W_0(\lambda_0))$, such that $\mathrm{det}(\hat\eta_0)=\hat\delta_0$, where $\hat \delta_0$ is the unique extension of $\delta_0$ with $A_0$ in its kernel.
	
	Similarly, we have $W_2=W_2(\tilde \lambda_0) \rtimes A_0$. Thus, any character $\eta_0 \in \Irr(W_2(\tilde \lambda_0))$ covered by a linear character of $W_2$ has a unique extension $\hat \eta_0 \in \Irr(W_2)$ with $A_0$ in its kernel.
	
	Let us now first assume that $G_0$ is not of type $D_4(q)$. Note that $E/\C_E(W_0)$ is cyclic and thus, by Lemma \ref{McKay} there exists an $E_0$-equivariant McKay-bijection $g_{\lambda_0}$ from $\Irr_{2'}(W_0(\lambda_0))$  to $\Irr_{2'}(W_2)$. This induces an $E_0$-equivariant bijection 
$$
\begin{array}{cccc}
f_0: &  \Irr(W_0(\tilde\lambda_0)) \mid \Irr_{2'}(W_0(\lambda_0)) &  \to &  \Irr(W_2(\tilde\lambda_0) \mid \Irr_{2'}(W_2) )\\
 & \eta_0 & \mapsto & \Res^{W_2}_{W_2(\tilde \lambda_0)}(g_{\lambda_0}(\hat{\eta}_0)).\\
\end{array}
$$
We then define $f_{\lambda_0}: \Irr_{2'}(W_0(\lambda_0)) \to \Irr_{2'}(W_2)$ by mapping the character $\hat{\eta}_0$ to $\widehat{f_0(\eta_0)}$ and extending this map $\Irr(W_0(\lambda_0)/W_0(\tilde{\lambda}_0))$-equivariantly. As $f_0$ is $E_0$-equivariant and $A_0$ is $E_0$-stable, so is $f_{\lambda_0}$.
	
	Finally, if $G_0$ is of type $D_4(q)$ then $W_2$ has index $3$ in $W_0$. Hence, $W_0(\lambda_0)=W_2$ or $W_0(\lambda_0)=W_0$. In the former case, we set $f_{\lambda_0}$ to be the identity map and in the latter case it is easy to explicitly construct a bijection $f_{\lambda_0}$ with the required properties.
\end{proof}

We are also interested in the action of automorphisms on local characters. To compute this action we use the following explicit parametrisation of characters.
Recall that $T:=\C_{\bG}(\bS)^{vF}=\bT^{vF}$ and $N:=\N_{\bG}(\bS)^{vF}$. 

\begin{prop}\label{parametrisation}
	Let $\Lambda$ be the extension map from \cite[Corollary 3.13]{MS} with respect to $T \lhd N$. Then the map
	$$\Pi: \mathcal{P}=\{(\lambda,\eta) \mid \lambda \in \Irr(T), \eta \in \Irr(W(\lambda)) \} \to \Irr(N), \, (\lambda, \eta) \mapsto \Ind_{N_\lambda}^{N}(\Lambda(\lambda) \eta),$$
	is surjective and satisfies
	\begin{enumerate}
		\item  $\Pi(\lambda, \eta)=\Pi(\lambda',\eta')$ if and only if there exists some $n \in N$ such that ${}^n \lambda= \lambda'$ and ${}^n \eta= \eta'$.
		\item ${}^\sigma \Pi(\lambda,\eta)=\Pi({}^\sigma \lambda, {}^\sigma \eta)$ for all $\sigma \in E$.
		\item Let $t \in \tilde T$, $\tilde \lambda \in \Irr(\tilde T \mid \lambda)$ and $\nu_t \in \Irr(N_\lambda/N_{\tilde \lambda})$ be the faithful linear character given by ${}^t \Lambda(\lambda)=\Lambda(\lambda) \nu_t$. Then we have ${}^t \Pi(\lambda,\eta)=\Pi(\lambda,\eta \nu_t)$.
	\end{enumerate}
\end{prop}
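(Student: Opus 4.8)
The plan is to derive the whole statement from ordinary Clifford theory for the normal subgroup $T\lhd N$, using only the defining property and the $N$- and $E$-equivariance of the extension map $\Lambda$ from \cite[Corollary 3.13]{MS}. Recall from Lemma~\ref{Sylow} that $N=TV$ with $N/T\cong V/H\cong\bW^{vF}$, so for $\lambda\in\Irr(T)$ the stabiliser $N_\lambda$ contains $T$ and $N_\lambda/T\cong W(\lambda)$; we inflate $\eta\in\Irr(W(\lambda))$ to $N_\lambda$, and since $\Lambda(\lambda)\in\Irr(N_\lambda)$ extends $\lambda$, Gallagher's theorem shows $\Lambda(\lambda)\eta\in\Irr(N_\lambda\mid\lambda)$, so that $\Pi(\lambda,\eta)=\Ind_{N_\lambda}^N(\Lambda(\lambda)\eta)$ is a well-defined irreducible character of $N$.

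For surjectivity, given $\chi\in\Irr(N)$ lying over $\lambda\in\Irr(T)$, Gallagher's theorem identifies $\Irr(N_\lambda\mid\lambda)$ with $\{\Lambda(\lambda)\eta\mid\eta\in\Irr(N_\lambda/T)\}$ and the Clifford correspondence makes $\Ind_{N_\lambda}^N\colon\Irr(N_\lambda\mid\lambda)\to\Irr(N\mid\lambda)$ a bijection, so $\chi=\Pi(\lambda,\eta)$ for a unique $\eta$. For one implication in (a): if $(\lambda',\eta')={}^n(\lambda,\eta)$, then conjugation by $n$ being inner on $N$ fixes $\Pi(\lambda,\eta)$, while $N_{{}^n\lambda}={}^nN_\lambda$ and $\Lambda({}^n\lambda)={}^n\Lambda(\lambda)$ by $N$-equivariance of $\Lambda$, so $\Pi(\lambda',\eta')={}^n\Pi(\lambda,\eta)=\Pi(\lambda,\eta)$. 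For the converse: if $\Pi(\lambda,\eta)=\Pi(\lambda',\eta')=\chi$, the $T$-constituents of $\chi$ form simultaneously the $N$-orbit of $\lambda$ and of $\lambda'$, so $\lambda'={}^n\lambda$ for some $n\in N$; replacing $(\lambda',\eta')$ by its $n^{-1}$-conjugate (using the implication just proved) we may assume $\lambda=\lambda'$, and then injectivity of the Clifford correspondence together with uniqueness in Gallagher's theorem forces $\eta=\eta'$.

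Part (b) follows from the $E$-equivariance of $\Lambda$: since $E$ stabilises $T$ and $V$, hence $N$, for $\sigma\in E$ we have ${}^\sigma N_\lambda=N_{{}^\sigma\lambda}$ and ${}^\sigma\Lambda(\lambda)=\Lambda({}^\sigma\lambda)$, so applying $\sigma$ to the induced character gives ${}^\sigma\Pi(\lambda,\eta)=\Ind_{N_{{}^\sigma\lambda}}^N(\Lambda({}^\sigma\lambda)\,{}^\sigma\eta)=\Pi({}^\sigma\lambda,{}^\sigma\eta)$. For part (c) the crucial point is that conjugation by $t\in\tilde T$ acts trivially on $N_\lambda/T$: the torus $\tilde\bT$ is abelian, so $t$ centralises $T$, fixes $\lambda$ and normalises $N_\lambda$, and since $\C_{\tilde\bG}(\bS)=\tilde\bT$ every $n\in N\le\N_\bG(\bS)$ normalises $\tilde\bT$, whence $[n,t]\in\bG\cap\tilde\bT=\bT$ is $vF$-fixed, i.e.\ $[n,t]\in T$. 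Therefore ${}^t\eta=\eta$ for $\eta$ inflated from $N_\lambda/T$, while ${}^t\Lambda(\lambda)$ is again an extension of $\lambda$ to $N_\lambda$, so by Gallagher's theorem ${}^t\Lambda(\lambda)=\Lambda(\lambda)\nu_t$ with $\nu_t$ the linear character of $N_\lambda/N_{\tilde\lambda}$ described in the statement; applying $t$ to the induction then yields ${}^t\Pi(\lambda,\eta)=\Ind_{N_\lambda}^N(\Lambda(\lambda)\nu_t\,\eta)=\Pi(\lambda,\eta\nu_t)$, where $\nu_t$ is viewed in $\Irr(N_\lambda/T)=\Irr(W(\lambda))$ via $T\le N_{\tilde\lambda}$.

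The only genuinely non-formal ingredient here is the existence of the equivariant extension map $\Lambda$, which is exactly \cite[Corollary 3.13]{MS}; granting that, the argument is pure Clifford-theoretic bookkeeping, the most delicate step being the verification in (c) that $\tilde T$ acts trivially on $N_\lambda/T$, so that ${}^t\eta=\eta$ and the Gallagher correction of $\Lambda(\lambda)$ under conjugation by $t$ is precisely the character $\nu_t$ of $N_\lambda/N_{\tilde\lambda}$ appearing in the statement.
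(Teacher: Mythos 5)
Your argument is correct: it is the standard Clifford--Gallagher bookkeeping (well-definedness and surjectivity via the Clifford correspondence for $T\lhd N$, (a) from the $N$-equivariance of $\Lambda$, (b) from its $E$-equivariance, and (c) from the fact that $\tilde T$ centralises $T$ and acts trivially on $N/T$), and your verification that $[n,t]\in\bG\cap\tilde\bT^{vF}=T$ is exactly the point that makes (c) work. The paper itself gives no argument here but simply cites \cite[Proposition~3.15]{MS}, and your proof is essentially the one underlying that citation, with the only external input being the $NE$-equivariant extension map of \cite[Corollary~3.13]{MS}, just as you say.
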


\begin{proof}
See \cite[Proposition 3.15]{MS}.
\end{proof}

Recall that $P$ is a Sylow $2$-subgroup of $N$ whose image in $N/T$ is $W_2$. We denote
$$\mathcal{P}_2=\{(\lambda,\eta) \mid \lambda \in \Irr(T_2)^{W_2}, \eta \in \Irr_{2'}(W_2)  \}.$$
As in the proof of Proposition \ref{direct product} we obtain that the map
$$\begin{array}{clcl} \Pi_{\mathrm{loc}}: & \mathcal{P}_2 &  \to &  \Irr_{2'}(P)\\
 &  (\lambda, \eta) & \mapsto &  \Res^{N_\lambda}_P(\Lambda(\lambda)) \eta,
\end{array}$$
is a bijection.

In the following we will compare the parametrisations arising in the two groups $G_0:=\bG^F$ and $G:=\bG^{vF}$.
For this, denote by $\Lambda_0$ the extension map from \cite[Corollary 3.13]{MS} with respect to $T_0 \lhd N_0$. 
To understand the action of automorphisms, recall that $D:=\iota^{-1}(E)/ \langle \hat{F} \rangle$, see the remarks after Corollary \ref{construction} and $\tilde{\bG}=\tilde{\bT}\bG$ is the regular embedding as in Section~\ref{setup}.
Thus for $ \tilde{t} \in \tilde{\bT}^F$, write $\tilde{t}=tz$ with $t \in \bT$ and $z \in \mathrm{Z}(\tilde{\bG})$. Then for $g$ from Lemma~\ref{TwistIsom} the element ${}^g \tilde{t} \in \tilde{\bT}^{vF}$ and we have a decomposition ${}^g \tilde{t}={}^g t {}^g z={}^g t z$.

\begin{prop}\label{compare extension map}
	Let $\lambda_0 \in \Irr(T_0)^{W_2}$ and set $\lambda=\alpha(\lambda_0) \in \Irr(T)^{W_2}$, for $\alpha$ from Lemma~\ref{conversion}. Suppose that $\nu_0 \in \Irr(W_0(\lambda))$ and $t_0 \in \tilde {T}_0$ satisfy ${}^{t_0} \Lambda_0(\lambda_0)=\Lambda(\lambda_0) \nu_0$. Then we have $ \Res^{N_\lambda}_P({}^{t}\Lambda(\lambda))=\Res^{N_\lambda}_P(\Lambda(\lambda)) \Res_{W_2}^{W_0(\lambda)}(\nu_0)$, where $t=\iota(t_0)$.
\end{prop}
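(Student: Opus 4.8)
The plan is to pass the hypothesis ${}^{t_0}\Lambda_0(\lambda_0)=\Lambda_0(\lambda_0)\nu_0$ from $G_0$ to $G$ through the twisting isomorphism $\iota$ of Lemma~\ref{TwistIsom}, after restricting every character in sight to a Sylow $2$-subgroup.

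First I would invoke Proposition~\ref{parametrisation}(c) together with the corresponding properties of $\Lambda_0$: fixing a lift $\tilde\lambda_0\in\Irr(\tilde T_0\mid\lambda_0)$ and a compatible $\tilde\lambda\in\Irr(\tilde T\mid\lambda)$ (Proposition~\ref{conversion2}), the hypothesis exhibits $\nu_0$ as the faithful linear character of $W_0(\lambda_0)/W_0(\tilde\lambda_0)=(N_0)_{\lambda_0}/(N_0)_{\tilde\lambda_0}$ attached to $t_0$, whereas on the $G$-side we get $\nu_t\in\Irr(N_\lambda/N_{\tilde\lambda})$ with ${}^{t}\Lambda(\lambda)=\Lambda(\lambda)\nu_t$ for $t=\iota(t_0)$. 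Both $\nu_0$ and $\nu_t$ are trivial on $T_0$, resp.\ $T$, hence on $(T_0)_2$, resp.\ $T_2$; moreover $P\subseteq N_\lambda$ with $P\cap T=T_2$ and $P/T_2\cong W_2$, and $P\cap N_{\tilde\lambda}=T_2\,(V_2\cap N_{\tilde\lambda})$ has image $W_2\cap W_0(\tilde\lambda_0)$ in $N/T$. Restricting the two identities to $P$, and using that $\Res^{N_\lambda}_P(\Lambda(\lambda))$ is linear (this is precisely what makes $\Pi_{\mathrm{loc}}$ a bijection in the proof of Proposition~\ref{direct product}), the assertion reduces to the equality
$$\Res^{N_\lambda}_P(\nu_t)=\mathrm{Infl}^{P}_{W_2}\!\left(\Res^{W_0(\lambda_0)}_{W_2}(\nu_0)\right)$$
of linear characters of $P$, where $\mathrm{Infl}^{P}_{W_2}$ denotes inflation along $P\to P/T_2\cong W_2$.

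To establish this I would transport along $\iota$. Writing $t_0=\bar t_0z_0$ with $\bar t_0\in\bT$ and $z_0\in\Z(\tilde\bG)$ as in the paragraph preceding the statement, the central element $z_0$ acts trivially on every extension and hence contributes nothing to either $\nu_0$ or $\nu_t$, while $t=\iota(t_0)=\iota(\bar t_0)z_0$ with $\iota(\bar t_0)\in\bT$. Thus, evaluated on a lift in $V_2$ of a given $w\in W_2$, the scalars $\nu_0$ and $\nu_t$ depend only on $\bar t_0$, resp.\ $\iota(\bar t_0)$, together with the $\bV$-section $\langle n_\alpha(1)\rangle$ used to build $\Lambda_0$ and $\Lambda$ in \cite[Corollary 3.13]{MS}. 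Since $\iota$ is conjugation by $g$ inside $\tilde\bG^{F_0^{2em}}\rtimes E$, it carries this section for $(\bG,F)$ to the one for $(\bG,vF)$, carries $Q=\iota^{-1}(P)$ to $P$, and, by Proposition~\ref{conversion} together with the Remark identifying the two duality correspondences, it intertwines $\Res_{(T_0)_2}(\lambda_0)$ with $\Res_{T_2}(\lambda)$. Because $w\in W_2\subseteq\C_\bW(w_0)$ and $gF(g)^{-1}=v=\tilde w_0$, the scalar assigned by $\nu_t$ to a lift of $w$ then agrees with the one assigned by $\nu_0$ to the $\iota$-corresponding lift, which yields the displayed equality.

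The step I expect to be the main obstacle is the transport in the previous paragraph: $\iota$ does \emph{not} carry $N_0$ onto $N$ --- only the Sylow $2$-subgroups $Q$ and $P$, and the local data around them, correspond --- so one cannot simply conjugate the whole identity ${}^{t_0}\Lambda_0(\lambda_0)=\Lambda_0(\lambda_0)\nu_0$. Making the comparison rigorous requires unwinding the construction of the Malle--Späth extension maps at the level of the cocycle defining them on the subgroup generated by $(T_0)_2$ and a Sylow $2$-subgroup of $V$, checking that this cocycle is $\iota$-equivariant on the $W_2$-part, and invoking Proposition~\ref{connected Weyl} so that the $v^\ast$-twist is invisible on the $W_2^\ast$-fixed semisimple element attached to $\lambda_0$. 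Once this is in place, the facts that $z_0$ contributes trivially and that the $w_0$-invariance of $W_2$ makes the $F$- and $vF$-versions of the pairing agree on $W_2$ are routine.
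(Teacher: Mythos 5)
Your reduction is sound as far as it goes: the statement is indeed an equality of linear characters of $P$ (equivalently of $W_2$), with $\nu_t$ on one side and the restriction of $\nu_0$ on the other. But the heart of the matter --- that these two linear characters agree on $W_2$ --- is exactly the step you defer, and your proposed route for it (transporting the Malle--Sp\"ath extension maps through $\iota$ and checking $\iota$-equivariance of the defining cocycles on the $W_2$-part) is not carried out; you yourself flag it as ``the main obstacle.'' That is a genuine gap, not a routine verification: as you correctly observe, $\iota$ does not carry $N_0$ onto $N$, and the two extension maps are built from global data on $\bV$ for $(\bG,F)$ and $(\bG,vF)$ respectively, so there is no reason a priori that conjugation by $g$ matches the two constructions even on lifts of elements of $W_2$.

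The paper's proof avoids the transport entirely by observing that $\nu$ (and likewise $\nu_0$) does not depend on the extension map at all. For $n\in V_\lambda$ one has ${}^{t}\Lambda(\lambda)(n)=\lambda([t,n])\,\Lambda(\lambda)(n)$ since $[t,n]\in T$, so $\nu(w)=\lambda([t,n])$ is given purely by the commutator pairing; writing $\lambda([t,n])=\tilde\lambda(t)\,{}^{n}\tilde\lambda(t^{-1})$ for any lift $\tilde\lambda$ identifies $\nu(w)$ with $\hat z(t)$, where $\hat z\in\Irr(\tilde T/T)$ is determined by ${}^{w}\tilde\lambda=\tilde\lambda\hat z$. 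The same formula holds for $\nu_0(w)=\hat z_0(t_0)$, and Proposition~\ref{conversion2} is precisely what guarantees that for $w\in W_2$ the characters $\hat z$ arising from $\tilde\lambda$ and from $\tilde\lambda_0$ correspond to the same central element, whence $\nu_0(w)=\hat z(t_0)=\hat z(t)=\nu(w)$. So the correct mechanism is the commutator formula plus the compatibility of $\tilde\alpha$ with the $\hat z$-action, not any equivariance of the sections under $\iota$; I would rework your argument along those lines rather than trying to make the cocycle comparison rigorous.
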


\begin{proof}
We first recall the general construction of the extension map $\Lambda$ with respect to $T \lhd N$ (see in particular the proof of \cite[Corollary 3.13]{MS}). First one constructs an extension map $\bH \lhd \bV$. In a second step one uses this to construct an extension map $\Gamma$ with respect to $H=\bH^{vF} \lhd V=\bV^{vF}$.
The extension map $\Lambda$ is then obtained by sending $\lambda \in \Irr(T)$ to the unique common extension $\Lambda(\lambda)$ in $N_\lambda=TV_\lambda$ of $\lambda$ and the restriction of $\Gamma(\Res^{T}_{H}(\lambda))$ to $V_\lambda$.

Now, let $\nu \in \Irr(W(\lambda))$ such that ${}^t \Lambda(\lambda)=\Lambda(\lambda) \nu$. For $n \in V_\lambda$ we have
$${}^t \Lambda(\lambda)(n)=\lambda([t,n]) \Lambda(\lambda)(n).$$ If $\tilde \lambda \in \Irr(\tilde{T})$ is an extension of $\lambda$ then we can write 
$\lambda([t,n])=\tilde{\lambda}(t) {}^n\tilde{\lambda}(t^{-1})$. We have ${}^n \tilde \lambda=\tilde \lambda \hat{z}$ for some linear character $\hat{z} \in \Irr(\tilde{T}/T)$. We conclude that $\nu(w)=\hat{z}(t)$, where $w$ is the image of $n$ in $W(\lambda)$. Since $\nu$ is a character of $N_\lambda/T \cong V_\lambda/H$ this uniquely determines $\nu$. Now for $w \in W_2$ the equality ${}^w \tilde \lambda= \tilde \lambda \nu$ implies by Lemma \ref{conversion2} that ${}^w \tilde{\lambda}_0=\tilde{\lambda}_0 \hat{z}$. The same reasoning as above now equally applies to the extension map $\Lambda_0$ with respect to $T_0 \lhd N_0$. Therefore, for $w \in W_2$ we find that $\nu_0(w)=\hat{z}(t_0)=\hat{z}(t)= \nu(w)$. We thus obtain
$$ \Res^{N_\lambda}_P({}^{t}\Lambda(\lambda))=\Res^{N_\lambda}_P(\Lambda(\lambda)) \Res_{W_2}^{W(\lambda)}(\nu)=\Res^{N_\lambda}_P(\Lambda(\lambda)) \Res_{W_2}^{W(\lambda)}(\nu_0),$$
which finishes the proof.
\end{proof}

We now turn to the action of automorphisms on the global characters. 
\begin{thm}\label{HC}
	Let $x\in \tilde{T}_0D$ and $\delta_{\lambda_0,x} \in \Irr(W_0({}^x \lambda_0))$ such that $\delta_{\lambda_0,x} \Lambda_0({}^x \lambda_0)={}^x \Lambda_0(\lambda_0)$. 
Then $${}^x(R_{T_0}^{G_0}(\lambda_0)_{\eta_0})=R_{T_0}^{G_0}({}^x \lambda_0)_{{}^x \eta_0 \delta_{\lambda_0,x}^{-1} }.$$
\end{thm}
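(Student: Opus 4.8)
The plan is to reduce the statement, via the Harish-Chandra parametrisation $\Pi_0$ of the principal series coming from \cite[Theorem 6.3]{MS}, to a corresponding statement about the action of $x$ on the parameter pair $(\lambda_0,\eta_0)\in\mathcal{P}_0$. Recall that $R_{T_0}^{G_0}(\lambda_0)_{\eta_0}=\Pi_0(\lambda_0,\eta_0)$, and that $\Pi_0$ becomes injective on $W_0$-orbits. So the assertion ${}^x\bigl(R_{T_0}^{G_0}(\lambda_0)_{\eta_0}\bigr)=R_{T_0}^{G_0}({}^x\lambda_0)_{{}^x\eta_0\,\delta_{\lambda_0,x}^{-1}}$ is equivalent to showing that $x$ sends the Harish-Chandra parameter $(\lambda_0,\eta_0)$ (up to $W_0$-conjugacy) to the parameter $({}^x\lambda_0,\,{}^x\eta_0\,\delta_{\lambda_0,x}^{-1})$. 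The first thing I would do is fix notation so that $x$ acts on $N_0=\N_{G_0}(\bS)$ and on the relative Weyl group $W_0(\lambda_0)=(N_0)_{\lambda_0}/T_0$; for $x\in D$ this is built into the setup (the groups $N_0$, $T_0$ are $D$-stable), and for $x\in\tilde T_0$ one uses conjugation inside $\tilde\bG^F$.

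Next I would separate the two kinds of elements. For $x\in D$ coming from a graph-or-field automorphism $\sigma\in E$, the key input is the equivariance of the whole Harish-Chandra machinery: Harish-Chandra induction commutes with $\sigma$, and by the way $\Lambda_0$ is constructed in \cite[Corollary 3.13]{MS} one has ${}^\sigma\Lambda_0(\lambda_0)=\Lambda_0({}^\sigma\lambda_0)$ exactly (no twist by a character), so $\delta_{\lambda_0,x}$ is trivial and the formula reads ${}^x\Pi_0(\lambda_0,\eta_0)=\Pi_0({}^x\lambda_0,{}^x\eta_0)$ — this is essentially Proposition \ref{parametrisation}(b) transported through $\Pi_0$. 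For $x=\tilde t_0\in\tilde T_0$, the relevant mechanism is diagonal conjugation: ${}^{\tilde t_0}R_{T_0}^{G_0}(\lambda_0)=R_{T_0}^{G_0}(\lambda_0)$ as a (possibly reducible) Harish-Chandra induced character since $\tilde t_0$ normalises $T_0$ and fixes $\lambda_0$ up to the linear character $\hat z$ by which it twists $\Lambda_0(\lambda_0)$; but the decomposition into the $\eta_0$-isotypic pieces is permuted, and the permutation is recorded precisely by the linear character $\delta_{\lambda_0,x}\in\Irr(W_0(\lambda_0))$ defined by $\delta_{\lambda_0,x}\Lambda_0({}^x\lambda_0)={}^x\Lambda_0(\lambda_0)$. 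This is the analogue, on the global side, of Proposition \ref{parametrisation}(c): the character $\nu_t$ there plays the role of $\delta_{\lambda_0,x}$, and one checks that ${}^{\tilde t_0}\bigl(R_{T_0}^{G_0}(\lambda_0)_{\eta_0}\bigr)$ has Harish-Chandra parameter $(\lambda_0,\eta_0\delta_{\lambda_0,x}^{-1})$ — the inverse appearing because $\delta$ is defined via ${}^x\Lambda_0$ rather than $\Lambda_0$. Finally I would combine the two cases for a general $x\in\tilde T_0 D$ by writing $x=\tilde t_0\sigma$ and composing, keeping track of how $\delta_{\lambda_0,x}$ behaves under composition (it satisfies a cocycle-type relation $\delta_{\lambda_0,x_1x_2}={}^{x_2^{-1}}\delta_{\lambda_0,x_1}\cdot\delta_{{}^{x_1}\lambda_0,x_2}$, up to the obvious identifications), which makes the two contributions assemble correctly.

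The main obstacle I expect is the $\tilde T_0$-case: making rigorous the claim that conjugation by $\tilde t_0$ permutes the constituents $R_{T_0}^{G_0}(\lambda_0)_{\eta_0}$ of $R_{T_0}^{G_0}(\lambda_0)$ according to tensoring $\eta_0$ by $\delta_{\lambda_0,x}^{-1}$. Concretely this requires knowing that the endomorphism algebra of $R_{T_0}^{G_0}(\lambda_0)$ is the group algebra of $W_0(\lambda_0)$ with $\Lambda_0(\lambda_0)$ providing the identification (this is Howlett–Lehrer theory, as used in the proof of \cite[Theorem 6.3]{MS}), and that the action of $\tilde t_0$ on this endomorphism algebra is induced by the algebra automorphism "multiply the basis element indexed by $w$ by $\delta_{\lambda_0,x}(w)$". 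Once that compatibility between the Howlett–Lehrer isomorphism and diagonal automorphisms is in hand — and it is exactly the content that \cite[Proposition 3.15]{MS} encodes on the side of $N_0$ — the rest is bookkeeping. The $D$-equivariance part is comparatively soft, resting only on the functoriality of Harish-Chandra induction and the construction of $\Lambda_0$.
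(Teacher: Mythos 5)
Your proposal is correct and follows essentially the same route as the paper: the paper's proof consists of citing \cite[Theorem 5.7]{MS} together with the argument in the proof of \cite[Proposition 6.3]{MS}, which is precisely the Howlett--Lehrer/equivariance machinery (action of diagonal automorphisms on the endomorphism algebra of $R_{T_0}^{G_0}(\lambda_0)$ twisting the parameter $\eta_0$ by $\delta_{\lambda_0,x}^{-1}$, and $E$-equivariance of $\Lambda_0$ handling the $D$-part) that you sketch. Your plan is in effect an unpacking of that citation rather than a different argument.
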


\begin{proof}
Follows from the results of \cite[Theorem 5.7]{MS} as explained in the proof of \cite[Proposition 6.3]{MS}.
\end{proof} 

\begin{rem}\label{AltEquivariance}
In the following theorem, to compensate for the inversion of $\delta_{\lambda_0,x}$ occurring in Theorem~\ref{HC}, a slightly altered version of $f_{\lambda_0}$ from Lemma~\ref{equivariance} is required. 
Fix $\mathcal{T}$ a $ \Irr(W_0(\lambda_0) / W_0(\tilde \lambda_0))\rtimes E_{\lambda_0}$-transversal on $\Irr_{2'}(W_0(\lambda_0))$.
Then for $\eta_0\in\mathcal{T}$, $\sigma\in E_{\lambda_0}$ and $\mu_0\in \Irr(W_0(\lambda_0) / W_0(\tilde \lambda_0))$ define $$f_{\lambda_0}':\Irr_{2'}(W_0(\lambda_0)) \to \Irr_{2'}(W_2)$$ by setting  $f_{\lambda_0}'({}^\sigma\eta_0 \mu_0):=f_{\lambda_0}({}^\sigma\eta_0 \mu_0^{-1})$.

It follows from construction for every character $\eta_0 \in \Irr_{2'}(W_0(\lambda_0))$ and $\mu_0 \in \Irr(W_0(\lambda_0) / W_0(\tilde \lambda_0))$ then $f'_{\lambda_0}(\eta_0\mu_0)=f'_{\lambda_0}(\eta_0) \Res^{W_0(\lambda_0)}_{W_2}(\mu_0^{-1})$.
Moreover, the definition implies that $f'_{\lambda_0}$ is also an $E_{\lambda_0}$-equivariant bijection.

\end{rem}

\begin{thm}\label{AM bijection}
	Assume the setting of Section \ref{setup}.
	For $b$ and $B$ the principal $2$-block of $G_0$ respectively $\N_{G_0}(Q)$, there exists an $\N_{\tilde G_0 D}(Q)$-equivariant bijection $\kappa:\Irr_0(b) \to \Irr_0(B)$.
\end{thm}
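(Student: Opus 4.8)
The idea is to define $\kappa$ explicitly by transporting the parametrisations established in Sections~3 and~4 from the global side to the local side. Concretely, we have the global bijection $\Pi_{\mathrm{glo}}\colon (\mathcal P_0)_2\to\Irr_0(b)$ from Theorem~\ref{global} and the local bijection $\Pi_{\mathrm{loc}}\colon\mathcal P_2\to\Irr_{2'}(P)\cong\Irr_0(B)$ (the last isomorphism via restriction, using Corollary~\ref{star property} and \cite[Theorem 9.12]{NavarroBook}), together with the identification $Q\cong P$ coming from $\iota$. So it suffices to produce an $\N_{\tilde G_0 D}(Q)$-equivariant bijection $(\mathcal P_0)_2\to\mathcal P_2$ and then conjugate by $\iota$ to land in the local characters of $G_0$. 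On the first coordinate we use $\alpha\colon\Irr((T_0)_2)^{W_2}\to\Irr(T_2)^{W_2}$ from Proposition~\ref{conversion} (its target is $\Irr((T)_2)^{W_2}$, which we may view inside $\Irr(T)$), and on the second coordinate the family of bijections $f'_{\lambda_0}\colon\Irr_{2'}(W_0(\lambda_0))\to\Irr_{2'}(W_2)$ from Remark~\ref{AltEquivariance}. That is, set
$$
\kappa:=\big(\iota^{-1}\text{-conjugation}\big)\circ\Pi_{\mathrm{loc}}\circ\big((\lambda_0,\eta_0)\mapsto(\alpha(\lambda_0),f'_{\lambda_0}(\eta_0))\big)\circ\Pi_{\mathrm{glo}}^{-1}.
$$
Each ingredient is a bijection by the cited results, so $\kappa$ is a bijection; it remains to verify equivariance.

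\textbf{Key steps.} First, reduce the equivariance of $\kappa$ to that of the middle map $(\lambda_0,\eta_0)\mapsto(\alpha(\lambda_0),f'_{\lambda_0}(\eta_0))$ between $(\mathcal P_0)_2$ and $\mathcal P_2$: the conjugation by $\iota$ intertwines the $\N_{\tilde G_0 D}(Q)$-action on $\Irr_0(B)$ with the $\N_{\tilde G D}(P)$-action on $\Irr_{2'}(P)$ (recall the isomorphism $(\bG^F\rtimes E)/\langle\hat F\rangle\cong(\bG^{vF}\rtimes E)/\langle v\hat F\rangle$ and that $D=\iota^{-1}(E)/\langle\hat F\rangle$); and the group $\N_{\tilde G_0 D}(Q)$ is generated by $Q$, by $\tilde T_0$-type diagonal elements, and by $D=$ (automorphisms coming from $E$). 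Since $Q$ acts trivially on $\Irr_{2'}(Q)$ (these are linear characters of $Q$... more precisely one handles the $Q$-action directly as in \cite{MS}), the content is equivariance under $E$ and under $\tilde T_0$. For the $E$-action: $\alpha$ is $\sigma$-equivariant by Proposition~\ref{conversion}, so $E_{\lambda_0}=E_{\alpha(\lambda_0)}$, and $f'_{\lambda_0}$ is $E_{\lambda_0}$-equivariant by Remark~\ref{AltEquivariance}; combined with Proposition~\ref{parametrisation}(b) on the local side and Theorem~\ref{HC} on the global side, one checks $\kappa({}^\sigma\chi)={}^\sigma\kappa(\chi)$ for $\sigma\in E$. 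For the $\tilde T_0$-action: this is where Theorem~\ref{HC}, Proposition~\ref{compare extension map}, and the twist identity $f'_{\lambda_0}(\eta_0\mu_0)=f'_{\lambda_0}(\eta_0)\Res^{W_0(\lambda_0)}_{W_2}(\mu_0^{-1})$ all come into play. Given $x\in\tilde T_0$, Theorem~\ref{HC} gives ${}^x(R^{G_0}_{T_0}(\lambda_0)_{\eta_0})=R^{G_0}_{T_0}({}^x\lambda_0)_{{}^x\eta_0\,\delta_{\lambda_0,x}^{-1}}$ with $\delta_{\lambda_0,x}\in\Irr(W_0({}^x\lambda_0))$; on the local side, Proposition~\ref{compare extension map} together with Proposition~\ref{parametrisation}(c) shows ${}^{\iota(x)}\Pi_{\mathrm{loc}}(\lambda,\eta)=\Pi_{\mathrm{loc}}({}^{\iota(x)}\lambda,\eta\cdot\Res^{W_0(\lambda_0)}_{W_2}(\delta_{\lambda_0,x}))$ (the key point being that the ``local'' twisting character is precisely the restriction to $W_2$ of the ``global'' one $\delta_{\lambda_0,x}$). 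The inversion built into $f'_{\lambda_0}$ is exactly designed so that $f'_{{}^x\lambda_0}({}^x\eta_0\,\delta_{\lambda_0,x}^{-1})={}^{\iota(x)}f'_{\lambda_0}(\eta_0)\cdot\Res^{W_0({}^x\lambda_0)}_{W_2}(\delta_{\lambda_0,x})$, so the two sides match.

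\textbf{Main obstacle.} The principal difficulty is the bookkeeping in the $\tilde T_0$-equivariance step: one must carefully match the twisting characters $\delta_{\lambda_0,x}$ appearing via $R^{G_0}_{T_0}$ on the global side with those appearing via the extension map $\Lambda$ on the local side, and confirm that they are compatible under $\Res_{W_2}$. This is exactly what Proposition~\ref{compare extension map} is set up to deliver, and the choice of $f'$ rather than $f$ (Remark~\ref{AltEquivariance}) absorbs the inversion from Theorem~\ref{HC}; but one also has to check that the transversal $\mathcal T$ chosen in defining $f'_{\lambda_0}$ can be taken compatibly across the $\tilde T_0 D$-orbit of $\lambda_0$ (i.e.\ that $\lambda_0\mapsto f'_{\lambda_0}$ itself can be chosen equivariantly, not just each $f'_{\lambda_0}$ individually $E_{\lambda_0}$-equivariantly). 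A secondary subtlety is justifying that it suffices to treat generators of $\N_{\tilde G_0 D}(Q)$ and that the chain of identifications through $\iota$ genuinely transports the actions — but this is routine given Lemma~\ref{TwistIsom} and the remarks following Corollary~\ref{construction}. Modulo these compatibility checks, the statement follows by assembling the bijections above.
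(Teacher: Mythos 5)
Your proposal is correct and follows essentially the same route as the paper: the paper also reduces to an equivariant bijection $\kappa'\colon\Irr_0(b)\to\Irr_{2'}(P)$ via restriction and $\iota$, defines it as $\Pi_{\mathrm{loc}}\circ\big((\lambda_0,\eta_0)\mapsto(\alpha(\lambda_0),f'_{\lambda_0}(\eta_0))\big)\circ\Pi_{\mathrm{glo}}^{-1}$, and verifies equivariance by matching the twisting character $\delta_{\lambda_0,x}$ from Theorem~\ref{HC} against Proposition~\ref{compare extension map}, with the inversion absorbed by $f'$ exactly as you describe. The compatibility points you flag (transversal choice across orbits, transport of actions through $\iota$) are handled implicitly in the paper in the same spirit.
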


\begin{proof}
Restriction defines an $\N_{\tilde G_0 D}(Q)$-equivariant bijection $\Irr_{0}(B) \to \Irr_{2'}(Q)$.
Additionally $\iota$ induces an equivariant bijection $\iota'$ between $\Irr_{2'}(Q)$ and $\Irr_{2'}(P)$, that is $\iota(\N_{\tilde G_0E}(Q))=\N_{\tilde GE}(P)$ and for $x\in \N_{\tilde G_0E}(Q)$, then $\iota'({}^x\chi)={}^{\iota(x)}\iota'(\chi)$.
Thus it suffices to produce an equivariant bijection $$\kappa':\Irr_0(b)\to \Irr_{2'}(P),$$ that is $\kappa'({}^x\chi)={}^{\iota(x)}\kappa'(\chi)$, for $x\in\N_{\tilde G_0 D}(Q)$ and $\chi\in \Irr_0(b)$. 
Note that as $P$ is $E$-stable, Corollary~\ref{star property} implies $\N_{\tilde GE}(P)=\C_{\tilde{G}}(\tilde{P})\tilde{P}E$ for $\tilde P=\tilde T_2 P$. 
Thus the action on $\Irr_{2'}(P)$ arises from $\tilde{T}_2E$.

By the proof of Theorem \ref{global} we have a bijection $\Pi_{\mathrm{glo}}:(\mathcal{P}_0)_2 \to \Irr_0(b)$. 
On the other hand $\Pi_{\mathrm{loc}}:\mathcal{P}_2 \to \Irr_{2'}(P)$ is a bijection.
Finally by combining Proposition \ref{conversion} and Remark~\ref{AltEquivariance} there is a bijection $(\mathcal{P}_0)_2 \to \mathcal{P}_2$ which sends a pair $(\lambda_0,\eta_0)$ to $(\alpha(\lambda_0),f'_{\lambda_0}(\eta_0))$ between parameter sets.
More explicitly, combining these yields a bijection
$$\begin{array}{clcl}
\kappa': & \Irr_0(b) & \to & \Irr_{2'}(P)\\
 & R_{T_0}^{G_0}(\lambda_0)_{\eta_0} & \mapsto & \Res_P^{N_{\alpha(\lambda_0)}}(\Lambda(\alpha(\lambda_0))) f'_{\lambda_0}(\eta_0).\\
\end{array}
$$ 

The equivariance of this bijection can be derived by combining the properties of Harish-Chandra induction established in Theorem \ref{HC}, the properties of the parametrisation from Proposition \ref{parametrisation} and Proposition \ref{compare extension map}:

Take $x\in \tilde{T}_0E$ and $\delta_{\lambda_0,x}$ such that ${}^{\tilde{t}_0}\Lambda({}^\sigma \lambda_0)= \Lambda({}^{\tilde{t}_0\sigma} \lambda_0)\delta_{\lambda_0,x}$ 
By Remark~\ref{AltEquivariance} $$
f'_{{}^x\lambda_0}({}^x\eta_0\delta_{\lambda_0,x}^{-1})=f'_{{}^x\lambda_0}({}^x\eta_0)\Res^{W_0({}^x\lambda_0)}_{W_2}(\delta_{\lambda_0,x})={}^{\iota(x)}f'_{\lambda_0}(\eta_0)\Res^{W_0({}^x\lambda_0)}_{W_2}(\delta_{\lambda_0,x}).
$$
While by Proposition \ref{parametrisation} and Proposition \ref{compare extension map} $${}^{\iota(x)}\left(\Res_P^{N_{\alpha(\lambda_0)}}(\Lambda(\alpha(\lambda_0)))\right)=\Res_P^{N_{\alpha({}^x\lambda_0)}}(\Lambda(\alpha({}^x\lambda_0)))\Res^{W_0({}^x\lambda_0)}_{W_2}(\delta_{\lambda_0,x})
$$
Thus for $x\in \tilde{T_0}E$, the equivariance follows as 
$$
\begin{array}{ccl}
\kappa'({}^xR_{T_0}^{G_0}(\lambda_0)_{\eta_0}) & = & \Res_P^{N_{\alpha({}^x\lambda_0)}}(\Lambda(\alpha({}^x\lambda_0))) f_{{}^x\lambda_0}({}^x\eta_0\delta_{\lambda_0,x}^{-1})\\
 & = & {}^{\iota(x)}\left(\Res_P^{N_{\alpha(\lambda_0)}}(\Lambda(\alpha(\lambda_0))) f_{\lambda_0}(\eta_0)\right)\\
  & = & {}^{\iota(x)}\kappa'(R_{T_0}^{G_0}(\lambda_0)_{\eta_0}).
\end{array}
$$
\end{proof}

%

\section{Characters of $\tilde{G}_0$}\label{tilde G}

In order to check the inductive conditions for $G_0:=\bG^F$ we also need information on characters of $\tilde G_0=\tilde{\bG}^F$ covering characters of $2'$-degree of $G_0$.
Recall that $T:=\bT^{vF}$, $N:=\N_{\bG}(\bS)^{vF}$ and $\Lambda$ is an extension map with respect to $T\lhd N$.
Additionally $\tilde{T}:=\tilde{\bT}^{vF}$ and $\tilde{N}:=\N_{\tilde{\bG}}(\bS)^{vF}$.

\begin{prop}\label{Tilde Extension}
	There exists an $NE$-equivariant extension map $\tilde \Lambda$ with respect to $\tilde T \lhd \tilde N$ given by sending $\tilde \lambda \in \Irr(\tilde T)$ to the unique common extension of $\tilde{\lambda}$ and $\Res_{N_{\tilde \lambda}}^{N_{\lambda}}(\Lambda(\lambda))$, where $\lambda=\Res_{T}^{\tilde T}(\tilde \lambda)$.
\end{prop}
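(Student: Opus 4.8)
The plan is to construct $\tilde\Lambda$ from $\Lambda$ by a glueing argument inside the stabilisers $\tilde N_{\tilde\lambda}$, exploiting that $\tilde N$ is a product of $\tilde T$ and $N$. First I would record the structural input coming from the regular embedding. Since $\tilde{\bG}=\bG\,\Z(\tilde{\bG})$ and $\tilde{\bT}=\bT\,\Z(\tilde{\bG})=\C_{\tilde{\bG}}(\bS)$, one gets $\N_{\tilde{\bG}}(\bS)=\N_{\bG}(\bS)\,\tilde{\bT}$ and hence $\tilde N=\tilde T N$ with $\tilde T\cap N=T$. Writing $\tilde t=tz$ with $t\in\bT$ and $z\in\Z(\tilde{\bG})$, a direct computation yields $[\tilde t,n]=[t,n]\in\bT\cap\bG=\bT$ for $n\in N$, so $[\tilde T,N]\subseteq T$; in particular $N$ normalises $T$ and the images of $\tilde T$ and $N$ commute in $\tilde N/T$. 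For $\tilde\lambda\in\Irr(\tilde T)$ put $\lambda:=\Res_T^{\tilde T}(\tilde\lambda)$; if $n\in N$ stabilises $\tilde\lambda$ it also stabilises $\lambda$, so the stabiliser $N_{\tilde\lambda}$ of $\tilde\lambda$ in $N$ lies inside $N_\lambda$ and $\tilde N_{\tilde\lambda}=\tilde T N_{\tilde\lambda}$; thus $\Res_{N_{\tilde\lambda}}^{N_\lambda}(\Lambda(\lambda))$ is defined. As $\Lambda(\lambda)$ extends the linear character $\lambda$ it is itself linear, so $\mu:=\Res_{N_{\tilde\lambda}}^{N_\lambda}(\Lambda(\lambda))\in\Lin(N_{\tilde\lambda})$ and $\mu|_T=\lambda=\tilde\lambda|_T$.

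The heart of the argument is the existence of a common extension of $\tilde\lambda$ and $\mu$ to $\tilde N_{\tilde\lambda}$. Set $L:=[\tilde T,N_{\tilde\lambda}]$, which is a subgroup of $T$ normal in $\tilde N_{\tilde\lambda}$. I would first check that $\lambda$ is trivial on $L$: for $\tilde t\in\tilde T$ and $n\in N_{\tilde\lambda}$ one has $\tilde\lambda([\tilde t,n])=\tilde\lambda(\tilde t)^{-1}\tilde\lambda(n^{-1}\tilde t n)=1$, since $n$ stabilises $\tilde\lambda$, and $[\tilde t,n]\in T$, so $\lambda([\tilde t,n])=1$. Hence $\lambda$, $\tilde\lambda$ and $\mu$ all factor through $\tilde N_{\tilde\lambda}/L$. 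In that quotient the image of $\tilde T$ and the image of $N_{\tilde\lambda}$ commute and intersect in the image of $T$, which is central there, so $\tilde N_{\tilde\lambda}/L$ is the central product of $\tilde T/L$ and $N_{\tilde\lambda}/L$ amalgamated along $T/L$. The standard glueing of two linear characters of the factors that agree on this common central subgroup then produces a linear character $\Xi\in\Lin(\tilde N_{\tilde\lambda})$ with $\Xi|_{\tilde T}=\tilde\lambda$ and $\Xi|_{N_{\tilde\lambda}}=\mu$, and I would set $\tilde\Lambda(\tilde\lambda):=\Xi$.

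Uniqueness of the common extension is immediate: two of them differ by a linear character of $\tilde N_{\tilde\lambda}$ trivial on both $\tilde T$ and $N_{\tilde\lambda}$, hence trivial on $\langle\tilde T,N_{\tilde\lambda}\rangle=\tilde N_{\tilde\lambda}$. For the $NE$-equivariance I would combine this uniqueness with the $NE$-equivariance of $\Lambda$ taken from \cite[Corollary 3.13]{MS}: for $x\in NE$ the character ${}^x\tilde\Lambda(\tilde\lambda)$ lives on $\tilde N_{{}^x\tilde\lambda}$, restricts to ${}^x\tilde\lambda$ on $\tilde T$, and — using ${}^x\lambda=\Res_T^{\tilde T}({}^x\tilde\lambda)$ together with ${}^x\Lambda(\lambda)=\Lambda({}^x\lambda)$ — restricts to $\Res_{N_{{}^x\tilde\lambda}}^{N_{{}^x\lambda}}(\Lambda({}^x\lambda))$ on $N_{{}^x\tilde\lambda}$; by uniqueness it therefore coincides with $\tilde\Lambda({}^x\tilde\lambda)$.

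The step I expect to be the main obstacle is the structural lemma underpinning the glueing — namely verifying $[\tilde T,N]\subseteq T$ and that $\lambda$ is trivial on $L=[\tilde T,N_{\tilde\lambda}]$, so that $\tilde N_{\tilde\lambda}/L$ genuinely is a central product — together with making sure one is entitled to invoke the full $NE$-equivariance, and not merely the $E$-equivariance, of the input extension map $\Lambda$.
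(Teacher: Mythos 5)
Your argument is correct and is essentially the same as the one the paper relies on: the paper's proof of this proposition is simply a citation to the proof of \cite[Proposition 3.20]{MS}, where the common extension is produced in exactly this way from $\tilde N_{\tilde\lambda}=\tilde T N_{\tilde\lambda}$, $\tilde T\cap N_{\tilde\lambda}=T$ and $[\tilde T,N_{\tilde\lambda}]\leq T\cap\ker\lambda$, with uniqueness and the $NE$-equivariance of $\Lambda$ (which is part of \cite[Corollary 3.13]{MS}) giving equivariance of $\tilde\Lambda$. The only cosmetic slip is that $[\tilde t,n]\in\tilde{\bT}\cap\bG=\bT$ (not ``$\bT\cap\bG$''); otherwise the details are in order.
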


\begin{proof}
	This was shown in the proof of \cite[Proposition 3.20]{MS}.
\end{proof}




\begin{df}\label{cov}
We say that $(\lambda_0,\eta_0) \in (\mathcal{P}_0)_2$ (as defined in Section~\ref{GlobalChar}) is covered by the pair $(\tilde \lambda_0, \tilde \eta_0)$ if $\tilde \lambda_0 \in \Irr(\tilde T_0 \mid \lambda_0)$ and $\tilde{\eta}_0 \in \Irr(W(\tilde \lambda_0) \mid \eta_0)$. Note that $\tilde \eta_0= \Res_{W(\tilde\lambda_0)}^{W(\lambda_0)}(\eta_0)$ since $W(\lambda_0)/W(\tilde \lambda_0)$ is a $2$-group and $\eta_0$ has $2'$-degree.
\end{df}

In the proof of Theorem~\ref{AM bijection}, the set $(\mathcal{P}_0)_2$ was used to provide a bijection between the height zero characters of the principal blocks of $G_0$ and $P$ by mapping $R_{T_0}^{G_0}(\lambda_0)_{\eta_0}$ to $\Res^{N_\lambda}_P(\Lambda(\lambda)) f_{\lambda_0 }(\eta_0)$, where $f_{\lambda_0}$ is from Lemma~\ref{equivariance} and  $\lambda:=\alpha(\lambda_0)$ for $\alpha$ as defined in Section~\ref{DualityChar}. 
The notion of covering defined for $(\mathcal{P}_0)_2$ can help understand those characters which cover the height zero characters in the principal blocks of $G_0$ and $P$ under the action of $\tilde{G}_0$ respectively $\tilde{P}:=\tilde{T}_2P$.
Recall that $\tilde{\alpha}$ from Lemma~\ref{conversion2} is a bijection between $\Irr(\tilde T_0 \mid \Irr(T_0)_2^{W_2})$ and $\Irr(\tilde T \mid \Irr(T)_2^{W_2} )$.

\begin{lm}\label{cover}
	Suppose that $(\tilde \lambda_0, \tilde \eta_0)$ covers $(\lambda_0,\eta_0) \in (\mathcal{P}_0)_2$ as in Definition \ref{cov}.
	\begin{enumerate}[label=(\alph*)]
	
\item 
	Then the character $\tilde \chi:=R_{\tilde T_0}^{\tilde G_0}(\tilde \lambda_0)_{\tilde \eta_0}$ covers $\chi:=R_{T_0}^{G_0}(\lambda_0)_{\eta_0}$.
\item 
Then the character $\tilde \psi:=\Ind_{\tilde{P}_{\tilde{\lambda}}}^{\tilde P}\left(\Res^{\tilde{N}_{\tilde{\lambda}}}_{\tilde P_{\tilde{\lambda}}}(\tilde{\Lambda}(\tilde \lambda)) \Res^{W_2}_{W_2(\tilde \lambda)}(f_{\lambda_0}(\eta_0))\right)$ covers \newline $\psi:=\Res^{N_\lambda}_P(\Lambda(\lambda)) f_{\lambda_0 }(\eta_0)$, for  $\lambda:=\alpha(\lambda_0)$ and $\tilde{\lambda}:=\tilde{\alpha}(\tilde{\lambda}_0)$.
	
		\end{enumerate}
	In particular, the characters $\tilde \psi$ and $\tilde \chi$ lie above the same central character of $\Z(\tilde{G})$.
\end{lm}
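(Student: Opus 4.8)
The plan is to prove the three assertions of Lemma~\ref{cover} in order, exploiting that Harish-Chandra induction and the extension maps $\Lambda_0$, $\wt\Lambda$ are compatible with restriction to the relevant normal subgroups, and that the regular embedding $\bG\hookrightarrow\wt\bG$ is central with abelian $2$-quotient $\wt T_0/T_0$ (on which $W_0$ acts trivially).

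\medskip

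\noindent\textbf{Part (a).} First I would recall that Harish-Chandra induction commutes with the inclusion $\bG\hookrightarrow\wt\bG$ in the sense that $\Res^{\wt G_0}_{G_0} R_{\wt T_0}^{\wt G_0}(\wt\lambda_0)=R_{T_0}^{G_0}(\Res^{\wt T_0}_{T_0}\wt\lambda_0)=R_{T_0}^{G_0}(\lambda_0)$; this is standard (e.g.\ via the transitivity and the fact that $\wt\bG=\wt\bT\bG$ so the parabolics match up). Next, the decomposition of $R_{T_0}^{G_0}(\lambda_0)$ indexed by $\Irr(W_0(\lambda_0))$ (from the surjection $\Pi_0$ after \cite[Theorem 6.3]{MS}), together with the analogous decomposition for $\wt G_0$ indexed by $\Irr(W_0(\wt\lambda_0))$, reduces the covering statement to the Clifford theory of the chain $W_0(\wt\lambda_0)\le W_0(\lambda_0)$ and the observation in Definition~\ref{cov} that $\wt\eta_0=\Res^{W_0(\lambda_0)}_{W_0(\wt\lambda_0)}(\eta_0)$ because $\eta_0$ has $2'$-degree and the quotient is a $2$-group, so $\eta_0$ is an irreducible constituent of $\Ind(\wt\eta_0)$. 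Matching the Harish-Chandra datum then gives that $\wt\chi=R_{\wt T_0}^{\wt G_0}(\wt\lambda_0)_{\wt\eta_0}$ has $\chi$ as a constituent of its restriction, i.e.\ $\wt\chi$ covers $\chi$.

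\medskip

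\noindent\textbf{Part (b).} Here I would argue directly on the local side. By Proposition~\ref{Tilde Extension}, $\wt\Lambda(\wt\lambda)$ restricts to $\Res^{N_\lambda}_{N_{\wt\lambda}}(\Lambda(\lambda))$ on $N_{\wt\lambda}$, so $\Res^{\wt N_{\wt\lambda}}_{\wt P_{\wt\lambda}}(\wt\Lambda(\wt\lambda))$ restricted to $P_{\wt\lambda}$ agrees with $\Res^{N_\lambda}_{P_{\wt\lambda}}(\Lambda(\lambda))$. Multiplying by $\Res^{W_2}_{W_2(\wt\lambda)}(f_{\lambda_0}(\eta_0))$ and inducing up, one uses the standard Mackey/Clifford computation: restricting the induced character $\wt\psi$ from $\wt P$ down to $P$ produces (a sum containing) $\Res^{N_\lambda}_P(\Lambda(\lambda))\,f_{\lambda_0}(\eta_0)=\psi$, since $\wt P=\wt T_2 P$ with $\wt T_2$ central in $\wt G$ and $W_2(\wt\lambda)=W_2\cap W_0(\wt\lambda)$, so $f_{\lambda_0}(\eta_0)$ (which is linear) restricts irreducibly to $W_2(\wt\lambda)$ exactly as recorded in the proof of Lemma~\ref{equivariance}. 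Thus $\wt\psi$ covers $\psi$.

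\medskip

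\noindent\textbf{The ``in particular'' clause.} Finally, both $\wt\chi$ and $\wt\psi$ are built from the \emph{same} character $\wt\lambda_0$ of $\wt T_0$ (transported to $\wt\lambda=\wt\alpha(\wt\lambda_0)$ on the local side); the central character of $\Z(\wt G)$ below an irreducible character of $\wt G_0$ lying in $\cE(\wt G_0,\wt s)$ is determined by the image of $\wt s$ in $\Z(\wt\bG^\ast)^{F^\ast}$, equivalently by $\Res_{\Z(\wt\bG)^F}\wt\lambda_0$. Since the construction in Remark after Proposition~\ref{conversion} was set up precisely so that $\wt\alpha$ fixes the restriction to $\Z(\wt\bG)$ (i.e.\ $\wt\alpha(\hat z)=\hat z$, and more generally $\wt\alpha$ preserves the image of the semisimple parameter under $i^\ast$), both $\wt\chi$ and $\wt\psi$ restrict to $\Z(\wt G)$ as a multiple of the character determined by $\Res_{\Z(\wt\bG)^{vF}}\wt\lambda$. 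Hence they lie above the same central character.

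\medskip

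\noindent\emph{Main obstacle.} The routine-looking Clifford-theoretic bookkeeping in part (b) is where the real care is needed: one must check that inducing the \emph{product} $\Res(\wt\Lambda(\wt\lambda))\cdot\Res^{W_2}_{W_2(\wt\lambda)}(f_{\lambda_0}(\eta_0))$ from $\wt P_{\wt\lambda}$ to $\wt P$ and then restricting to $P$ really returns $\psi$ and not merely something covering it — i.e.\ that the two ``induction then restriction'' and ``restriction then multiplication'' operations commute here. This hinges on the precise compatibility of $\wt\Lambda$ with $\Lambda$ from Proposition~\ref{Tilde Extension} and on $\wt P/P$ being a $2$-group with $\wt T_2$ central, so the Mackey formula has a single nonzero term; verifying there is no extra multiplicity or twist is the delicate point. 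Everything else follows formally from the cited results.
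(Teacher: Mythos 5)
Your proposal is correct and follows essentially the same route as the paper: part (a) is the compatibility of Harish--Chandra induction and its Hecke-algebra labelling with the regular embedding (which the paper simply outsources to Bonnaf\'e, \emph{Ast\'erisque} 306, Th.~13.9(b)), part (b) is Proposition~\ref{Tilde Extension} plus the Clifford/Mackey computation of \cite[Corollary 3.21]{MS}, and the central-character claim follows because $\tilde\chi$ and $\tilde\psi$ lie over $\tilde\lambda_0$ and $\tilde\lambda=\tilde\alpha(\tilde\lambda_0)$ respectively and $\tilde\alpha$ preserves the semisimple parameter (Lemma~\ref{conversion2}). The only point you assert rather than prove is the matching of the labels $\tilde\eta_0\mapsto\eta_0$ under restriction in (a), which is exactly the content of the cited theorem of Bonnaf\'e.
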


\begin{proof}
Part (a) follows from \cite[Theorem 13.9(b)]{Bonnafe2}, while part (b) is a consequence of Proposition \ref{Tilde Extension} and can be obtained as in \cite[Corollary 3.21]{MS}. 
For the final statement about central characters observe that $\tilde \psi$ lies above the character $\tilde \lambda \in \Irr(\tilde T)$ and $\tilde \chi$ lies above $\tilde \lambda_0 \in \Irr(\tilde T_0)$. By the properties of the bijection in Lemma \ref{conversion2} they both lie above the same character of $\mathrm{Z}(\tilde G_0)=\mathrm{Z}(\tilde G)$.
\end{proof}

The following lemma is crucial in verifying the inductive conditions.

\begin{lm}\label{auto cover}
	Let $\tilde{\chi} \in \Irr(\tilde{G}_0)$ and $\tilde{\psi} \in \Irr(\tilde{P})$ as in Lemma \ref{cover}. Let $\sigma \in E$ and suppose that $\tilde{\chi}^\sigma= \tilde{\chi} \hat{z}$ for some $\hat{z} \in \Irr(\tilde G_0 / G_0)$. Then we have $\tilde{\psi}^\sigma= \tilde{\psi} \hat{z}$.
\end{lm}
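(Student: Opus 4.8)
The plan is to transport the twist action under $\sigma\in E$ from the global side to the local side through the two parametrisations, tracking the correcting characters carefully. First I would unpack the hypothesis $\tilde\chi^\sigma=\tilde\chi\hat z$ using Lemma~\ref{cover}(a) and Theorem~\ref{HC} (applied over $\tilde G_0$): writing $\tilde\chi=R_{\tilde T_0}^{\tilde G_0}(\tilde\lambda_0)_{\tilde\eta_0}$, the equality $\tilde\chi^\sigma=\tilde\chi\hat z$ forces, after replacing $(\tilde\lambda_0,\tilde\eta_0)$ by an $\tilde N_0$-conjugate, that ${}^\sigma\tilde\lambda_0=\tilde\lambda_0\hat z$ as a character of $\tilde T_0$ (twisting $\hat z$ by restriction) and that the associated $\delta$-character on $W(\tilde\lambda_0)$ is trivial, so that ${}^\sigma\tilde\eta_0=\tilde\eta_0$. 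Restricting to $\bG^F$, this also gives ${}^\sigma\lambda_0=\lambda_0$ (since $\hat z$ is trivial on $T_0$) and ${}^\sigma\eta_0=\eta_0$, i.e.\ $\sigma\in E_{\lambda_0}$.

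Next I would move to the $\bG^{vF}$-side using Lemma~\ref{conversion2}: since $\alpha$, and hence $\tilde\alpha$, is $\sigma$-equivariant in the relevant sense, the relation ${}^\sigma\tilde\lambda_0=\tilde\lambda_0\hat z$ yields ${}^\sigma\tilde\lambda=\tilde\lambda\hat z$ for $\tilde\lambda=\tilde\alpha(\tilde\lambda_0)$ (with $\hat z$ denoting the common central character as in the Remark), and restricting gives ${}^\sigma\lambda=\lambda$ for $\lambda=\alpha(\lambda_0)$. On the $W$-side, $f_{\lambda_0}$ (or rather the version used, $f'_{\lambda_0}$) is $E_{\lambda_0}$-equivariant by Lemma~\ref{equivariance}, so $f_{\lambda_0}(\eta_0)$ is $\sigma$-fixed. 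Now I would apply the parametrisation of $\Irr(\tilde N)$, via Proposition~\ref{Tilde Extension} and the explicit formula for $\tilde\psi$ in Lemma~\ref{cover}(b): $\tilde\psi=\Ind_{\tilde P_{\tilde\lambda}}^{\tilde P}\!\bigl(\Res(\tilde\Lambda(\tilde\lambda))\,\Res^{W_2}_{W_2(\tilde\lambda)}(f_{\lambda_0}(\eta_0))\bigr)$. Applying $\sigma$ and using the $NE$-equivariance of $\tilde\Lambda$ together with Proposition~\ref{parametrisation}(2)--(3): the automorphism $\sigma$ sends $\tilde\Lambda(\tilde\lambda)$ to $\tilde\Lambda({}^\sigma\tilde\lambda)=\tilde\Lambda(\tilde\lambda\hat z)$, which by the Gallagher-type behaviour of $\tilde\Lambda$ equals $\tilde\Lambda(\tilde\lambda)\hat z$ (viewing $\hat z$ as a linear character of $\tilde N/\tilde N'$ trivial on $N$); the $W_2(\tilde\lambda)$-factor is unchanged since $f_{\lambda_0}(\eta_0)$ is $\sigma$-fixed and $\hat z$ is trivial on $N$, hence on $W_2$. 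Inducing back, $\tilde\psi^\sigma=\tilde\psi\hat z$.

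The main obstacle I expect is bookkeeping the identification of $\hat z\in\Irr(\tilde G_0/G_0)$ across the three incarnations: as a central character of $\mathrm Z(\tilde G_0)^\ast$ acting by translation, as the linear character $\hat z\in\Irr(\tilde T_0/T_0)=\Irr(\tilde T/T)$ via the Remark after Proposition~\ref{conversion}, and as a linear character of $\tilde N/N$ that must restrict compatibly to $\tilde P/P$ so that the final equality $\tilde\psi^\sigma=\tilde\psi\hat z$ is an equality of the \emph{same} $\hat z$ appearing in the hypothesis. The key point making this work is that $\mathrm Z(\tilde G)=\mathrm Z(\tilde G_0)$ and $\iota$ is the identity on central elements, so the identifications in the Remarks and in Lemma~\ref{conversion2} are coherent; and that $\tilde\psi$ and $\tilde\chi$ lie above the same central character (the final assertion of Lemma~\ref{cover}), which pins down that the $\hat z$-twist is detected on the $\tilde T$-part and therefore survives the induction $\Ind_{\tilde P_{\tilde\lambda}}^{\tilde P}$ unchanged. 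A secondary subtlety is ensuring that the $\tilde N_0$-conjugation used to normalise $(\tilde\lambda_0,\tilde\eta_0)$ at the start does not disturb the hypotheses; this is handled because such a conjugation changes $\tilde\psi$ and $\tilde\chi$ by an $\tilde N_0$- resp.\ $\tilde G_0$-conjugation, which commutes with the $\hat z$-twist (as $\hat z$ is $\tilde G_0$-invariant, being linear on an abelian quotient), so the statement is conjugation-invariant.
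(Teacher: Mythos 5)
Your plan is correct and follows essentially the same route as the paper's proof: establish that the underlying label $(\lambda_0,\eta_0)$ is $\sigma$-fixed and that $\tilde{\lambda}_0^\sigma=\tilde{\lambda}_0\hat{z}$, then transport this to the local side via Lemma~\ref{conversion2}, the $E_{\lambda_0}$-equivariance of $f_{\lambda_0}$ (Lemma~\ref{equivariance}) and the equivariance of the extension map $\tilde{\Lambda}$ (Proposition~\ref{Tilde Extension}, \cite[Proposition 3.20]{MS}). The only divergence is in the first step: instead of inverting the Harish--Chandra parametrisation of $\tilde{\chi}^\sigma=\tilde{\chi}\hat{z}$ and normalising the labels by an $\tilde{N}_0$-conjugation, the paper invokes \cite[Corollary 6.4]{MS} to produce a constituent $\chi\in\Irr(G_0\mid\tilde{\chi})$ with $(\tilde{G}_0E)_\chi=(\tilde{G}_0)_\chi E_\chi$, which yields $\chi^\sigma=\chi$ on the nose and so sidesteps the conjugation bookkeeping you flag at the end.
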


\begin{proof}
By \cite[Corollary 6.4]{MS} there exists a character $\chi \in \Irr(G_0 \mid \tilde \chi)$ which satisfies $(\tilde{G}_0 E)_\chi=(\tilde{G}_0)_\chi E_\chi$. Therefore, we have $\chi^\sigma=\chi$ and consequently if $(\lambda_0, \eta_0)$ is the label in $(\mathcal{P}_0)_2$ of $\chi$ we have $(\lambda_0^\sigma,\eta_0^\sigma)=(\lambda_0,\eta_0)$. We have $\tilde \lambda_0^\sigma= \tilde \lambda_0 \hat{z}$ for some $\hat{z} \in \Irr(\tilde T/ T)$ and so we obtain that $W_0(\tilde \lambda_0)$ is $\sigma$-stable. Moreover, $\tilde \eta_0^\sigma=\tilde \eta_0$. We obtain $\tilde \Lambda_0(\tilde \lambda_0)^\sigma= \tilde \Lambda_0(\tilde \lambda_0) \hat{z}$, see Proposition \ref{Tilde Extension}.
Thus,
$$\tilde \chi^\sigma=R_{\tilde T_0}^{\tilde G_0}(\tilde \lambda_0^\sigma)_{\tilde \eta_0^\sigma}=
\hat{z} R_{\tilde T_0}^{\tilde G_0}(\tilde \lambda_0)_{\tilde \eta_0}=\hat{z} \tilde \chi,$$
where the second to last equality is derived from \cite[Proposition 13.15]{Bonnafe2}. Moreover, $\tilde \lambda^\sigma= \tilde \lambda \hat{z}$ by Lemma \ref{conversion2}. On the other hand, $\tilde{\Lambda}(\tilde \lambda)^\sigma=\tilde{\Lambda}(\tilde \lambda) \hat{z}$ by \cite[Proposition 3.20]{MS} and so
$\tilde \psi^\sigma=\tilde\psi \hat{z},$
which finishes the proof.
\end{proof}

\section{The inductive conditions}

In this section, we show that the principal $2$-block of $G_0:=\bG^F$ for $(\bG,F)$ as in Section \ref{setup} satisfies the AM-condition. 
Recall that $Q:=\iota^{-1}(P)$ from Section~\ref{setup} is a Sylow $2$-subgroup of $G_0$.
In the following $b$ and $B$ denote the principal $2$-block of $G_0$ respectively $\N_{G_0}(Q)$.
We need the following lemma.

\begin{lm}\label{all characters extend}

\,
\begin{enumerate}[label=(\alph*)]
	\item 

	Let $\chi \in \Irr_{2'}(G_0)$. Then $\chi$ extends to $G_0D_\chi$.
	\item
	Let $\chi' \in \Irr_{0}(\N_{G_0}(Q),B)$. Then $\chi'$ extends to $\N_{G_0 D}(Q)_{\chi'}$ and $\N_{\tilde{G}_0}(Q)_{\chi'}$.
\end{enumerate}
\end{lm}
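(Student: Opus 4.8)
The plan is to handle the two parts separately, in both cases exploiting that the characters in question have odd degree (respectively height zero), so the relevant quotients of the cyclic-by-elementary abelian group $D$ (or $E$) acting on them are controlled.

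For part (a): I would first recall that $\chi \in \Irr_{2'}(G_0)$ lies in the principal $1$-Harish--Chandra series by \cite[Theorem 3.3]{MalleGalois}, so by the parametrisation $\Pi_0$ it is of the form $R_{T_0}^{G_0}(\lambda_0)_{\eta_0}$ with $\lambda_0$ of $2$-power order and $\eta_0 \in \Irr_{2'}(W_0(\lambda_0))$; indeed after $W_0$-conjugation $(\lambda_0,\eta_0) \in (\mathcal{P}_0)_2$. The obstruction to extending $\chi$ to $G_0 D_\chi$ lives in $H^2(D_\chi, \mathbb{C}^\times)$; since $D$ is cyclic-by-(elementary abelian $2$-group) — the quotient $E/\C_E(W_0)$ and the field-automorphism part — and the abelian subgroup $\tilde T_0 / T_0 Z(\tilde\bG)$ acts via linear characters, the standard route is to produce an extension directly. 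Here I would invoke \cite[Corollary 6.1]{MS} or the analogous statement in Malle--Späth: every $\chi \in \Irr_{2'}(G_0)$ extends to its stabiliser in $G_0 \rtimes D$, because the projective representation can be trivialised using that $D_\chi$ is the product of a cyclic group (field automorphisms, for which extensions always exist by \cite[Theorem 5.7]{MS} type arguments / Lusztig induction behaving well) and the diagonal part (handled by central characters, $\tilde G_0 / G_0$ being cyclic in the relevant factors, or abelian, so Gallagher applies). Concretely, I expect this to reduce to: (i) $\chi$ extends to $G_0 \langle \wh F \rangle_\chi$ by the explicit equivariance of $\Pi_0$ under field automorphisms from Theorem~\ref{HC}, and (ii) $\chi$ extends to $\tilde G_0{}_\chi$ because $\tilde G_0 / G_0$ is abelian and one checks the obstruction vanishes using that $W_0(\lambda_0)/W_0(\tilde\lambda_0)$ is a $2$-group while $\eta_0$ has odd degree. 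Combining via a coprime/Clifford argument (as $D = D_2 \times D_{2'}$-type decomposition) gives the extension to all of $G_0 D_\chi$.

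For part (b): by Proposition~\ref{local} together with Corollary~\ref{star property}, $\chi' \in \Irr_0(\N_{G_0}(Q), B)$ restricts to a linear character of $Q$, equivalently (after transport by $\iota'$) corresponds to $\Pi_{\mathrm{loc}}(\lambda,\eta) = \Res^{N_\lambda}_P(\Lambda(\lambda))\eta \in \Irr_{2'}(P)$ for a unique $(\lambda,\eta) \in \mathcal{P}_2$. To extend $\chi'$ to $\N_{G_0 D}(Q)_{\chi'}$, I would transport the problem to $\Irr_{2'}(P)$ via $\iota'$ and use the explicit $E$-equivariance built into $\Pi_{\mathrm{loc}}$: the label $(\lambda,\eta)$ is $D_{\chi'}$-stable, so it suffices to extend the factors $\Res^{N_\lambda}_P(\Lambda(\lambda))$ and $\eta$ separately. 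For $\Lambda(\lambda)$ one uses that $\tilde\Lambda$ (Proposition~\ref{Tilde Extension}) is $NE$-equivariant, so $\Res^{N_\lambda}_P(\Lambda(\lambda))$ extends to its stabiliser in $P \rtimes E$ by the same $NE$-equivariance descended to $P$; for $\eta \in \Irr_{2'}(W_2)$, which is linear, extension to $(W_2 \rtimes E)_\eta$ is automatic since $W_2$ is abelian modulo nothing — actually $\eta$ linear means $\eta$ extends to any overgroup in which it is stable, as $H^2$ of the cyclic-by-$2'$ part vanishes. Then one glues. For extension to $\N_{\tilde G_0}(Q)_{\chi'}$: by Corollary~\ref{star property} $\N_{\tilde G_0}(Q) = \N_{\tilde G_0}(\tilde Q) = \C_{\tilde G_0}(\tilde Q)\tilde Q$, so $\tilde P / P$ acts on $\Irr_{2'}(P)$ through the abelian group $\tilde T_2 / T_2$ by multiplication with linear characters $\nu_t$ (Proposition~\ref{parametrisation}(c)); stability of $\chi'$ forces $\eta\nu_t = \eta$ for the relevant $t$, and then $\chi'$ extends to $\tilde P_{\chi'}$ because $\tilde P / P$ is abelian and the twisting is by linear characters, so Gallagher/the usual obstruction-vanishing argument for abelian quotients with linear action applies.

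The main obstacle I anticipate is part (a), specifically verifying that the second cohomology obstruction to extending $\chi$ to $G_0 D_\chi$ vanishes when $D_\chi$ involves both a field-automorphism component and a diagonal (outer-diagonal) component simultaneously — the field part is handled by the $E$-equivariance of Harish--Chandra induction and the diagonal part by abelianness of $\tilde G_0 / G_0$, but the interaction (a genuinely non-abelian $D_\chi$) requires care. I expect this to be dispatched by the coprimality decomposition $D_\chi = (D_\chi)_2 \times (D_\chi)_{2'}$ — extensions over a $2$-group and over a cyclic $2'$-group can both be produced and then merged, since a $2'$-degree character always extends to $G_0$-by-($2$-group) (the obstruction lies in $H^2$ of a $2$-group with values in roots of unity, and one uses that $\det$ can be adjusted, à la \cite[Theorem 5.7]{MS}) and extends over the cyclic $2'$-part trivially. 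Part (b) should be comparatively routine given the explicit parametrisations $\Pi_{\mathrm{loc}}$ and Proposition~\ref{parametrisation}, as everything there reduces to abelian quotients acting by linear characters.
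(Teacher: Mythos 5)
Your overall strategy is close to the paper's: part (a) really is just a citation to Malle--Sp\"ath (the paper quotes \cite[Proposition 8.10]{MS}, which is precisely the statement that odd-degree characters of $G_0$ extend to their stabilisers in $G_0D$), and for part (b) the paper, like you, transports the problem to the Sylow $2$-subgroup $P=\iota(Q)$ of $\bG^{vF}$ and uses that $\N_G(P)=\C_G(P)P$ forces $\chi'$ to be the trivial extension of a linear character $\lambda$ of $P$. The concrete gap is in your treatment of the first statement of (b). The group $\N_{G_0D}(Q)_{\chi'}$ lives in the quotient $(\bG^F\rtimes E)/\langle \hat F\rangle\cong(\bG^{vF}\rtimes E)/\langle v\hat F\rangle$, so after transport by $\iota$ what you must produce is not merely an extension of $\lambda$ to $PE_\lambda$, but one containing $v\hat F$ in its kernel; since $v$ lies in $P$ (it is a representative of $w_0\in\Z(W_2)$) while $\hat F\in E$, this is a genuine condition coupling the value of the extension on $P$ with its value on $E$, and it is the entire content of the paper's argument: one chooses a linear character $\nu\in\Irr(E_\lambda)$ with $\nu(\hat F)=\lambda(v)^{-1}$ and sets $\hat\lambda(pe):=\lambda(p)\nu(e)$. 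Your proposal (extend $\Res^{N_\lambda}_P(\Lambda(\lambda))$ and $\eta$ separately to their stabilisers in $P\rtimes E$ and glue) never imposes this kernel condition, so at best it produces an extension to a group that only covers $\N_{G_0D}(Q)_{\chi'}$ modulo $\langle v\hat F\rangle$, not an extension of $\chi'$ itself.

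A secondary problem is that several of your ``automatic'' extension claims are false as stated. A stable \emph{linear} character of a normal subgroup need not extend when the quotient is merely abelian (the faithful linear character of the centre of an extraspecial $2$-group is the standard counterexample); only a cyclic quotient, or an explicit trivialisation of the cocycle, gives this for free. Likewise, an invariant character of odd degree need not extend over a $2$-group quotient without controlling the order of its determinant. The paper avoids all such cohomological shortcuts by writing the extensions down explicitly: for the first statement of (b) as above, and for the second statement of (b) by combining Corollary \ref{star property} with the character $\tilde\psi$ of $\tilde P$ already constructed in Lemma \ref{cover}(b), rather than by a Gallagher-type vanishing argument for the abelian quotient $\tilde P/P$. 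Your sketch of (a) has the same looseness, but since the paper itself disposes of (a) by citation, the substantive repair needed is the $v\hat F$-kernel condition in (b).
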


\begin{proof}
The first part was proved in \cite[Proposition 8.10]{MS}. 

For the first statement of (b) we pass to the Sylow $2$-subgroup $P=\iota(Q)$ of $\bG^{vF}$. We have $\N_{G}(P)=\C_{G}(P) P$ by Theorem \ref{Malle}. In particular, any height zero character of the principal block of $\N_G(P)$ is a trivial extension of a linear character of $P$. In other words, it is enough to show that every linear character $\lambda \in \Irr(P)$ extends to a character $\hat{\lambda} \in \Irr(P E_\lambda)$ with $v\hat{F}$ in its kernel. For this choose a linear character $\nu \in \Irr(E_\lambda)$ with $\nu(\hat{F})=\lambda(v)^{-1}$ and define $\hat{\lambda}(pe):=\lambda(p) \nu(e)$ for $p \in P$ and $e \in E_\lambda$. The second part follows from Corollary \ref{star property} and Lemma \ref{cover}(b).
\end{proof}

The following lemma also helps the checking of the inductive conditions (even though we won't use it in the upcoming arguments).

\begin{lm}
Any character in $\Irr_0(b)$ or $\Irr_0(B)$ has $Z(G_0)$ in its kernel.	
\end{lm}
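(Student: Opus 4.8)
The plan is to reduce the statement to a claim about the explicit parametrisations already established and then check the two sides separately. For $\Irr_0(b)$, by Theorem~\ref{global} every $\chi \in \Irr_0(b)$ is of the form $R_{T_0}^{G_0}(\lambda_0)_{\eta_0}$ for some $(\lambda_0,\eta_0) \in (\mathcal{P}_0)_2$, where $\lambda_0 \in \Irr((T_0)_2)^{W_2}$. First I would observe that a character of $2$-power order of $T_0 = (T_0)_2 \times (T_0)_{2'}$ lies in $\Irr((T_0)_2)$, hence has $(T_0)_{2'}$ in its kernel; in particular, since $\Z(\bG)^F \leq \bT^F = T_0$ and the image of $\Z(\bG)^F$ under duality lies in the appropriate component, one needs to see that $\Z(\bG)^F_{2'} \leq \ker \lambda_0$ automatically and that $\Z(\bG)^F_2$ is also killed. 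The key point is that $\Z(\bG)^F$ acts trivially via $R_{T_0}^{G_0}$: a Deligne--Lusztig character $R_{T_0}^{G_0}(\lambda_0)$ restricted to $\Z(\bG)^F$ is $\lambda_0|_{\Z(\bG)^F}$ times the degree (since $\Z(\bG)^F$ is central and acts by scalars on the virtual module), so $\chi$ has $\Z(G_0)$ in its kernel if and only if $\lambda_0$ does. Thus it suffices to show $\lambda_0|_{\Z(\bG)^F} = 1$.

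To prove $\lambda_0$ is trivial on $\Z(\bG)^F$, I would argue via duality. Under the bijection $\Irr(T_0) \to (T_0^\ast)^{F^\ast}$, the character $\lambda_0$ corresponds to a semisimple element $s_0 \in (\bT^\ast)^{F^\ast}$, and $\lambda_0$ being of $2$-power order is equivalent to $s_0$ having $2$-power order; moreover $\lambda_0 \in \Irr(T_0)^{W_2}$ forces $s_0 \in (T_2^\ast)^{W_2^\ast} = Z$, which by the remark following Proposition~\ref{local} is a central subgroup of a Sylow $2$-subgroup $P^\ast$ of $G^\ast$. The restriction $\lambda_0|_{\Z(\bG)^F}$ corresponds under the surjection $\tilde{\bG}^\ast \to \bG^\ast$ (dual to the regular embedding) to the image of $s_0$ in $\bG^\ast_{\mathrm{ad}} = \bG^\ast/\Z(\bG^\ast)$, or rather: $\lambda_0$ is trivial on $\Z(\bG)^F$ precisely when $s_0$ lifts to an element of $(\bT^\ast)^{F^\ast}$ of the simply connected cover $\dots$ — more cleanly, $\lambda_0|_{\Z(\bG)^F} = 1$ if and only if $s_0 \in [\bG^\ast,\bG^\ast]$. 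Since $s_0$ has $2$-power order and $\bG$ is simply connected, $\bG^\ast$ is adjoint and $\Z(\bG^\ast)$ is trivial, so $\bG^\ast = [\bG^\ast,\bG^\ast]$ and $s_0 \in [\bG^\ast, \bG^\ast]$ automatically; therefore $\lambda_0$ is trivial on $\Z(\bG)^F$. This handles $\Irr_0(b)$.

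For $\Irr_0(B)$, by Corollary~\ref{star property} we have $\N_{G_0}(Q) = \C_{G_0}(Q)Q$, so restriction gives a bijection $\Irr_0(B) \to \Irr_{2'}(Q)$, and via $\iota$ one may work instead with $\Irr_{2'}(P)$ where $P = T_2 V_2$ is the $E$-stable Sylow $2$-subgroup of $G = \bG^{vF}$. Here $\Z(G) = \Z(\bG)^{vF} \leq \bT^{vF} = T$, and since $\Z(\bG)^{vF}$ is a central subgroup of $G$ it lies in every Sylow $2$-subgroup intersected with $T$, i.e. $\Z(\bG)^{vF}_2 \leq T_2$ and $\Z(\bG)^{vF}_{2'}$ is not in $P$ at all. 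A linear character $\chi' \in \Irr_{2'}(P)$, via the bijection of Proposition~\ref{direct product}, corresponds to a pair $(\lambda, \eta)$ with $\lambda \in \Irr(T_2)^{W_2}$; one checks $\lambda|_{\Z(\bG)^{vF}_2}$ by the same duality argument as above (now with respect to $(\bG, vF)$ and its dual), concluding $\lambda$ is trivial on $\Z(\bG)^{vF}_2$, hence $\chi'$ is trivial on $\Z(G)$, and transporting back via $\iota$ (which identifies $\Z(G)$ with $\Z(G_0)$) gives that every character in $\Irr_0(B)$ has $\Z(G_0)$ in its kernel.

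The main obstacle I anticipate is making the duality bookkeeping precise: namely, pinning down exactly why a $W_2$-stable character of $T_0$ (resp. $T$) of $2$-power order corresponds to a semisimple element of $2$-power order centralising a Sylow $2$-subgroup of the dual, and why such an element, the ambient algebraic group being simply connected with adjoint dual, necessarily gives a character trivial on the $F$-fixed points of the centre. This is essentially the content of the relation between $\Z(\bG)^F$ and $[\bG^\ast, \bG^\ast]$ under duality (see \cite[Section 13]{MarcBook}), combined with the triviality of $\Z(\bG^\ast)$ when $\bG$ is simply connected; everything else is routine Clifford theory and the already-established bijections of Proposition~\ref{direct product}, Proposition~\ref{local} and Theorem~\ref{global}.
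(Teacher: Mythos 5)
Your reduction of the global case to showing $\lambda_0|_{\Z(\bG)^F}=1$ is fine (central elements act on $R_{T_0}^{G_0}(\lambda_0)$, hence on each constituent, through $\lambda_0$), but the duality argument you give for that triviality contains a genuine error. The condition is not whether $s_0$ lies in the derived subgroup of the \emph{algebraic} group $\bG^\ast$: every semisimple algebraic group equals its own derived group, so that condition is vacuous, and as written your argument would show that \emph{every} character of $T_0$ is trivial on $\Z(\bG)^F$ — which is false (for $\SL_2(q)$ the order-two character of the split torus takes the value $(-1)^{(q-1)/2}$ on $-I$). The correct criterion lives at the level of finite groups: $\lambda_0$ is trivial on $\Z(\bG)^F$ if and only if $s_0$ lies in $[(\bG^\ast)^{F^\ast},(\bG^\ast)^{F^\ast}]$, i.e.\ in the image of the $F^\ast$-fixed points of the simply connected covering of $\bG^\ast$, and this is in general a proper subgroup of $(\bG^\ast)^{F^\ast}$. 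Your hypotheses on $\lambda_0$ (being of $2$-power order) only give triviality on $\Z(\bG)^F_{2'}$ for free; the $2$-part of the centre is exactly the delicate point, and it is precisely where the cases excluded in Section \ref{setup} (e.g.\ $\Sp_{2n}(q)$ with $q\not\equiv 1\ \mathrm{mod}\ 8$, cf.\ Example \ref{example}) would go wrong, so no argument that never uses those hypotheses can be correct.

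The paper's proof avoids duality entirely. For the global side: since $\chi$ has odd degree and $G_0$ is perfect, $\det\chi=1$, and evaluating at $z\in\Z(G_0)_2$ gives $\omega_\chi(z)^{\chi(1)}=1$ with $\omega_\chi(z)$ of $2$-power order and $\chi(1)$ odd, whence $\omega_\chi(z)=1$; triviality on $\Z(G_0)_{2'}$ then comes from $\chi\in\mathcal{E}(G_0,s)$ with $s$ a $2$-element, by \cite[Lemma 2.2]{MalleHZ}. For the local side the paper does not recompute inside $P$: it invokes Lemma \ref{cover}, by which $\tilde\chi$ and $\tilde\psi$ lie over the same character of $\Z(\tilde G_0)$, so the global triviality transfers. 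Note also that your local argument has a further loose end: $\Z(G_0)_{2'}$ is not contained in $Q$, so triviality of a character of $\N_{G_0}(Q)$ on $\Z(G_0)_{2'}$ cannot be read off from its restriction to $Q$ alone; one must control the extension across $\C_{G_0}(Q)$, which is what Lemma \ref{cover} does.
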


\begin{proof}
Let $\chi \in \Irr_{2'}(G_0)$. Since $\chi$ is of $2'$-degree and $G_0$ is quasi-simple, the character $\chi$ has $\mathrm{Z}(G_0)_{2}$ in its kernel. If $\chi$ lies moreover in the principal block, then $\chi \in \mathcal{E}(G_0,s)$ for some $2$-element $s$. Thus, $\chi$ is also trivial on $Z(G_0)_{2'}$ by \cite[Lemma 2.2]{MalleHZ}. The local height zero characters were parametrised after Proposition \ref{parametrisation}. Thus, for them the result follows from Lemma \ref{cover}. 
\end{proof}

We will use the following theorem to check the inductive condition for the blocks in question. For the language of character triples and the definition of the relation $\geq_b$ we refer the reader to \cite[Section 1.1]{Jordan2}. Moreover, for $\chi \in \Irr(H)$, an irreducible character of a finite group $H$, we denote by $\mathrm{bl}(\chi)$ the $2$-block of $H$ to which $\chi$ belongs.

\begin{thm}\label{12}
	Let $ \chi \in \Irr(G_0,b)$ and $\chi' \in \Irr(\N_{G_0}(Q),B)$ such that the following holds:
	\begin{enumerate}[label=(\roman*)]
		\item We have $(\tilde{G}_0 {D})_\chi = (\tilde{G}_0)_\chi {D}_\chi$ and 
		$\chi$ extends to $(G_0 {D})_\chi$.
		\item
		We have $( \N_{\tilde{G}_0}(Q)  \mathrm{N}_{G_0 {D}}(Q) )_{\chi'}= \N_{\tilde{G}_0}(Q)_{\chi'} \mathrm{N}_{G_0 {D}}(Q)_{\chi'}$ and $\chi'$ extends to $ \mathrm{N}_{G_0 {D}}(Q)_{\chi'}$ and $ \mathrm{N}_{\tilde{G}_0}(Q)_{\chi'}$.
		\item $(\tilde{G}_0 {D})_\chi = G_0 ( \N_{\tilde{G}_0}(Q)  \mathrm{N}_{G_0 {D}}(Q) )_{\chi'}$.
		\item There exists $\tilde{\chi} \in \Irr(\tilde{G}_0 \mid \chi)$ and $\tilde{\chi}' \in \Irr(\N_{\tilde{G}_0}(Q) \mid \chi')$ such that the following holds:
		\begin{itemize}

			\item  For all $m \in {\mathrm{N}_{G_0 {D}}(Q)}_{\chi'}$ there exists $\nu \in \Irr(\tilde{G}_0 /G_0)$ with $\tilde{\chi}^m= \nu \tilde{\chi}$ and $\tilde{\chi}'^m=\mathrm{Res}^{\tilde{G}_0}_{\N_{\tilde{G}_0}(Q)}(\nu) \tilde{\chi}'$.
			\item The characters $\tilde{\chi}$ and $\tilde{\chi}'$ cover the same underlying central character of $\mathrm{Z}(\tilde{G}_0)$.
		\end{itemize}
		\item The Clifford correspondents $\tilde{\chi}_0 \in \Irr((\tilde{G}_0)_\chi \mid \chi)$ and $\tilde{\chi}'_0 \in \Irr(\N_{\tilde{G}_0}(Q)_{\chi'} \mid \chi')$ of $\tilde{\chi}$ and $\tilde{\chi}'$ respectively satisfy $\mathrm{bl}(\tilde{\chi}_0)= \mathrm{bl}(\tilde{\chi}_0')^{(\tilde{G}_0)_\chi}$.
	\end{enumerate}
	Let $Z_0:=  \mathrm{Ker}(\chi) \cap \mathrm{Z}(G_0) $.
	Then
	$$(( \tilde{G}_0 {D})_\chi /Z_0, G_0/Z_0, \overline{\chi}) \geq_b ((\N_{\tilde{G}_0}(Q) \mathrm{N}_{G_0 {D}}(Q))_{\chi'} / Z_0,\N_{G_0}(Q)/Z_0, \overline{\chi'}),$$
	where $\overline{\chi} \in \Irr(G_0/Z_0)$ and $\overline{\chi'} \in \Irr(\N_{G_0}(Q)/Z_0)$ are the characters which inflate to $\chi$, respectively $\chi'$.
\end{thm}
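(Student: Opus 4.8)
The plan is to verify, one item at a time, the data required by the definition of the block isomorphism of character triples $\geq_b$ as recalled in \cite[Section 1.1]{Jordan2}, feeding in hypotheses (i)--(v). Throughout set $\hat G:=(\tilde G_0 D)_\chi$, $\hat M:=(\N_{\tilde G_0}(Q)\N_{G_0 D}(Q))_{\chi'}$ and $M_0:=\N_{G_0}(Q)$. First I would record the group-theoretic skeleton. Since $\chi$ is $\hat G$-invariant, $\ker(\chi)\lhd\hat G$, and as $\mathrm{Z}(G_0)$ is characteristic in $G_0\lhd\hat G$ the subgroup $Z_0=\ker(\chi)\cap\mathrm{Z}(G_0)$ is normal in $\hat G$; by (iii) one has $\hat M\subseteq G_0\hat M=\hat G$, so $Z_0\lhd\hat M$ as well, and $Z_0\subseteq M_0$ because it is central in $G_0$. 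One also checks $Z_0\subseteq\ker(\chi')$: the character $\tilde\chi$ of (iv) restricts to a multiple of $\chi$ on $G_0$, hence is trivial on $Z_0$; by the last bullet of (iv) the character $\tilde\chi'$ shares its central character on $\mathrm{Z}(\tilde G_0)\supseteq Z_0$, so $\tilde\chi'$, and therefore $\chi'$, is trivial on $Z_0$ --- so the quotients by $Z_0$ in the statement make sense. Finally $G_0\cap\hat M=M_0$ (indeed $G_0\cap\N_{\tilde G_0}(Q)\N_{G_0 D}(Q)=\N_{G_0}(Q)$, and $M_0$ certainly fixes $\chi'$), so (iii) gives a canonical isomorphism $\hat G/G_0\cong\hat M/M_0$ compatible with the inclusion $\hat M\le\hat G$; this is the first datum required by $\geq_b$. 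Passing to the quotients by $Z_0$ is harmless, and I suppress it below.

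Next I would build the compatible projective representations. On the global side, (i) gives $\hat G=(\tilde G_0)_\chi D_\chi$ together with an honest extension of $\chi$ to $(G_0 D)_\chi$, afforded by a representation $\mathcal D_D$; and the Clifford correspondent $\tilde\chi_0\in\Irr((\tilde G_0)_\chi\mid\chi)$ of the character $\tilde\chi$ from (iv) is an extension of $\chi$ (as $\chi$ extends to its stabiliser in $\tilde G_0$), afforded by a representation $\mathcal D_{\tilde G}$. Gluing $\mathcal D_{\tilde G}$ and $\mathcal D_D$ along $\chi$ yields a projective representation $\mathcal P$ of $\hat G=(\tilde G_0)_\chi D_\chi$ lying over $\chi$, whose factor set is inflated from $\hat G/G_0$, is trivial on the image of $D_\chi$, and on the image of $(\tilde G_0)_\chi$ is read off from the scalars $\nu_m\in\Irr(\tilde G_0/G_0)$ occurring in $\tilde\chi^{\,m}=\nu_m\tilde\chi$. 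Performing the same construction with (ii) and $\tilde\chi'$ gives a projective representation $\mathcal P'$ of $\hat M$ over $\chi'$. The crucial point is that, transported along $\hat M/M_0\cong\hat G/G_0$, the factor sets of $\mathcal P$ and $\mathcal P'$ agree: on the $D$-parts both are trivial by construction, and in the $\tilde G_0$-direction this is exactly what (iv) supplies, since the \emph{same} linear character $\nu_m$ governs $\tilde\chi^{\,m}=\nu_m\tilde\chi$ and $(\tilde\chi')^{\,m}=\Res^{\tilde G_0}_{\N_{\tilde G_0}(Q)}(\nu_m)\tilde\chi'$ for $m\in\N_{G_0 D}(Q)_{\chi'}$, while (ii) and (iii) ensure these two kinds of elements generate $\hat M/M_0\cong\hat G/G_0$ compatibly. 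The central-character condition in $\geq_b$ --- that $\mathrm{C}_{\hat G}(G_0)$ (contained in $\hat M$, since it also centralises $\N_{G_0}(Q)$) acts by the same scalars under $\mathcal P$ and $\mathcal P'$, these scalars being controlled by $\mathrm{Z}(\tilde G_0)$ as $G_0$ is quasi-simple --- is precisely the statement that $\tilde\chi$ and $\tilde\chi'$ cover the same central character of $\mathrm{Z}(\tilde G_0)$, i.e. the last bullet of (iv).

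It then remains to match blocks, i.e. to refine the character-triple isomorphism just obtained to a \emph{block} isomorphism. Here I would use the standard behaviour of blocks under Clifford correspondence: for an irreducible character induced from a Clifford correspondent, its block is the Brauer-induced block of the correspondent, compatibly with the block bookkeeping built into the $\geq_b$-relation in \cite[Section 1.1]{Jordan2}. Under this dictionary the block attached to $\mathcal P$ on the relevant intermediate group is the Brauer correspondent of the one attached to $\mathcal P'$ exactly when $\mathrm{bl}(\tilde\chi_0)=\mathrm{bl}(\tilde\chi_0')^{(\tilde G_0)_\chi}$, which is hypothesis (v). Assembling the canonical isomorphism from (i)--(iii), the factor-set and central-character compatibility from (iv), and the block compatibility from (v), and reinstating the quotient by $Z_0$, gives the asserted relation
$$((\tilde G_0 D)_\chi/Z_0,\,G_0/Z_0,\,\overline\chi)\ \geq_b\ ((\N_{\tilde G_0}(Q)\N_{G_0 D}(Q))_{\chi'}/Z_0,\,\N_{G_0}(Q)/Z_0,\,\overline{\chi'}).$$
The hard part is the cocycle comparison of the middle paragraph: one must fix the projective representations of $\hat G$ and $\hat M$ on a common generating set, check that the $D$-part choices really are honest extensions (so that factor-set contribution vanishes on the nose, not merely up to a coboundary), and keep the three identifications --- $\hat G/G_0$ with $\hat M/M_0$, each with its group of linear characters $\nu_m$, and the Clifford correspondents $\tilde\chi_0,\tilde\chi_0'$ --- strictly parallel; the remaining verifications are a direct unwinding of the definitions.
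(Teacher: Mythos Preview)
Your proposal is a genuine attempt to prove the theorem from the definition of $\geq_b$, whereas the paper does not prove it at all: it simply cites \cite[Theorem 2.1]{Jordan2} and \cite[Lemma 2.2]{Jordan2} as the source of this criterion. In other words, Theorem~\ref{12} is quoted as an external tool, and what you have written is essentially a reconstruction of the argument behind those cited results rather than an alternative to the paper's own proof.

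That said, your outline tracks the standard verification used in \cite{Jordan2}: establish the group-theoretic isomorphism $\hat G/G_0\cong\hat M/M_0$ from (iii), build projective representations over $\chi$ and $\chi'$ by gluing the genuine extensions supplied by (i), (ii) along the $D$-direction with the Clifford correspondents $\tilde\chi_0,\tilde\chi_0'$ along the $\tilde G_0$-direction, use (iv) to match factor sets and central scalars, and use (v) for the block condition. One point to tighten: knowing the factor set separately on the images of $D_\chi$ and of $(\tilde G_0)_\chi$ does not by itself determine it on $\hat G/G_0$; you also need the mixed values $\alpha(d,t)$ and $\alpha(t,d)$, which in this setting are controlled because the extension to $(G_0D)_\chi$ is honest and the action of $D_\chi$ on $\tilde\chi_0$ is again through a linear character of $\tilde G_0/G_0$ (this is where the first bullet of (iv), stated for all $m\in\N_{G_0D}(Q)_{\chi'}$, does its work). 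Likewise, the passage from (v) to the block condition in $\geq_b$ is not a one-liner: one must check it for \emph{every} intermediate subgroup $G_0\le J\le\hat G$, and this is exactly what \cite[Lemma 2.2]{Jordan2} supplies by reducing to the single Clifford-correspondent statement in (v). If you want a self-contained write-up, those two spots are where the real bookkeeping lives.
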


\begin{proof}
	This is a consequence of \cite[Theorem 2.1]{Jordan2} and \cite[Lemma 2.2]{Jordan2}. 
\end{proof}

Note that all conditions in Theorem \ref{12} except condition (v) only depend on the character theory of $G_0$ and $\tilde{G}_0$ (together with its associated groups).

\begin{thm}\label{principal block}
	Let $(\bG,F)$ be as in Section \ref{setup}.	Then the principal $2$-block $b$ of $G_0$ satisfies the AM-condition.
\end{thm}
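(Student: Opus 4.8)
The plan is to apply Theorem~\ref{12} with the bijection $\kappa\colon\Irr_0(b)\to\Irr_0(B)$ constructed in Theorem~\ref{AM bijection}, which is already $\N_{\tilde G_0 D}(Q)$-equivariant. Since $b$ and $B$ are principal blocks, hence $\Aut(G_0)$-stable, and the group $\tilde G_0 D$ induces the automorphisms of $G_0$ relevant for the inductive condition (the diagonal automorphisms coming from $\tilde G_0$ and the field and graph automorphisms from $D$), it suffices by the reduction of Sp\"ath \cite{IAM}, in the form used in \cite{Jordan2}, to exhibit for each $\chi\in\Irr_0(b)$ and $\chi':=\kappa(\chi)$ the data (i)--(v) of Theorem~\ref{12}. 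Throughout we use that $\Z(G_0)$ lies in the kernel of every character in $\Irr_0(b)\cup\Irr_0(B)$, so that in Theorem~\ref{12} we may take $Z_0=\Z(G_0)$.

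First I would fix, within a given orbit, a representative $\chi$ with $(\tilde G_0 E)_\chi=(\tilde G_0)_\chi E_\chi$, which is possible by \cite[Corollary 6.4]{MS}; translating between the $E$- and $D$-actions via $\iota$ as in the preceding sections then yields condition (i), the extension of $\chi$ to $(G_0 D)_\chi$ being Lemma~\ref{all characters extend}(a). Because $\kappa$ is $\N_{\tilde G_0 D}(Q)$-equivariant, the stabiliser of $\chi'$ in $\N_{\tilde G_0}(Q)\N_{G_0 D}(Q)$ is governed by that of $\chi$; this gives the factorisation in condition (ii) as well as condition (iii), namely $(\tilde G_0 D)_\chi=G_0(\N_{\tilde G_0}(Q)\N_{G_0 D}(Q))_{\chi'}$. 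The remaining part of (ii), that $\chi'$ extends both to $\N_{G_0 D}(Q)_{\chi'}$ and to $\N_{\tilde G_0}(Q)_{\chi'}$, is Lemma~\ref{all characters extend}(b).

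For condition (iv) I would write $\chi=R_{T_0}^{G_0}(\lambda_0)_{\eta_0}$ with $(\lambda_0,\eta_0)\in(\mathcal P_0)_2$, pick $\tilde\lambda_0\in\Irr((\tilde T_0)_2\mid\lambda_0)$ and set $\tilde\eta_0:=\Res^{W_0(\lambda_0)}_{W_0(\tilde\lambda_0)}(\eta_0)$, so that $(\tilde\lambda_0,\tilde\eta_0)$ covers $(\lambda_0,\eta_0)$ in the sense of Definition~\ref{cov}. Then $\tilde\chi:=R_{\tilde T_0}^{\tilde G_0}(\tilde\lambda_0)_{\tilde\eta_0}\in\Irr(\tilde G_0\mid\chi)$ by Lemma~\ref{cover}(a), and the corresponding local character obtained from Lemma~\ref{cover}(b) (transported back through $\iota$ and restriction to a character $\tilde\chi'\in\Irr(\N_{\tilde G_0}(Q)\mid\chi')$) lies above the same central character of $\Z(\tilde G_0)=\Z(\tilde G)$ as $\tilde\chi$, by the final assertion of Lemma~\ref{cover}. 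The compatibility required under $m\in\N_{G_0 D}(Q)_{\chi'}$, namely $\tilde\chi^m=\nu\tilde\chi$ for $\nu\in\Irr(\tilde G_0/G_0)$ if and only if $\tilde\chi'^m=\Res(\nu)\tilde\chi'$, is exactly Lemma~\ref{auto cover} (applied after transporting $m$ to an element acting through $E$). Passing to Clifford correspondents then produces $\tilde\chi_0\in\Irr((\tilde G_0)_\chi\mid\chi)$ and $\tilde\chi'_0\in\Irr(\N_{\tilde G_0}(Q)_{\chi'}\mid\chi')$.

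This leaves condition (v), the equality $\mathrm{bl}(\tilde\chi_0)=\mathrm{bl}(\tilde\chi'_0)^{(\tilde G_0)_\chi}$, which I expect to be the main obstacle: it is the only genuinely block-theoretic ingredient and is not visible from the character-theoretic machinery of Sections~\ref{DualityChar}--\ref{tilde G}. The route I would take is first to observe, using condition (iii) together with $\N_{\tilde G_0}(Q)=\C_{\tilde G_0}(Q)Q$ from Corollary~\ref{star property}, that $\N_{\tilde G_0}(Q)_{\chi'}=\N_{(\tilde G_0)_\chi}(Q)$, so that $\mathrm{bl}(\tilde\chi'_0)^{(\tilde G_0)_\chi}$ is defined. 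One must then identify the two blocks: since $\tilde\chi_0$ lies over $\chi\in\Irr_0(b)$, which belongs to $\mathcal E(G_0,s)$ for a $2$-element $s$ and to the principal Harish-Chandra series, one shows that $\mathrm{bl}(\tilde\chi_0)$ is the principal block of $(\tilde G_0)_\chi$ --- this is where the real work lies, combining Bonnaf\'e's results on Lusztig induction \cite{Bonnafe2} with Enguehard's description of unipotent blocks \cite{Enguehard} --- and likewise that $\mathrm{bl}(\tilde\chi'_0)$ is the principal block of $\N_{(\tilde G_0)_\chi}(Q)$; Brauer's third main theorem then gives $\mathrm{bl}(\tilde\chi'_0)^{(\tilde G_0)_\chi}=\mathrm{bl}(\tilde\chi_0)$. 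With (i)--(v) established, Theorem~\ref{12} yields the block-isomorphic relation $\geq_b$ for the triples attached to $\chi$ and $\chi'$, and together with the $\N_{\tilde G_0 D}(Q)$-equivariance of $\kappa$ this establishes the AM-condition for $b$.
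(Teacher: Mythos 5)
Your treatment of conditions (i)--(iv) matches the paper's argument: one normalises $\chi$ so that $(\tilde G_0 D)_\chi=(\tilde G_0)_\chi D_\chi$, invokes the equivariance of $\kappa$ for (ii)--(iii), Lemma~\ref{all characters extend} for the extendibility statements, and Lemma~\ref{cover} together with Lemma~\ref{auto cover} for (iv). Up to that point the proposal is essentially the paper's proof.

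The route you sketch for condition (v), however, contains a genuine error. You propose to show that $\mathrm{bl}(\tilde\chi_0)$ is the \emph{principal} block of $(\tilde G_0)_\chi$ and then apply Brauer's third main theorem. This is false in general: $\tilde\chi$ covers $\chi\in\mathcal E(G_0,s)$ with $s$ a $2$-element, but $\tilde\chi$ itself lies in $\mathcal E(\tilde G_0,\tilde s)$ where $\tilde s$ may acquire a nontrivial odd-order central component, so $\mathrm{bl}(\tilde\chi)$ is in general a nontrivial twist $\tilde b\otimes\hat z$ of the principal block $\tilde b$ of $\tilde G_0$ by a linear character $\hat z$ with $z\in \Z(\tilde G_0)_{2'}$ (already for $\SL_n(q)\le\GL_n(q)$ not every block of $\GL_n(q)$ covering the principal block of $\SL_n(q)$ is principal). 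The correct argument, which is the one the paper gives, is to observe that the blocks of $\tilde G_0$ covering $b$ are parametrised by $Z(\tilde G_0)_{2'}$ via $z\mapsto\tilde b\otimes\hat z$, so that $\mathrm{bl}(\tilde\chi)$ is determined by the central character of $\tilde\chi$; by the last assertion of Lemma~\ref{cover} the characters $\tilde\chi$ and $\tilde\psi$ lie over the \emph{same} central character, and the Harris--Knörr correspondence (rather than Brauer's third main theorem, which is only the case $\hat z=1$) then identifies $\mathrm{bl}(\tilde\psi)$ as the correspondent of $\tilde b\otimes\hat z$. Condition (v) for the Clifford correspondents then follows because $\mathrm{bl}(\tilde\chi_0)$ is the unique block of $(\tilde G_0)_\chi$ below $\mathrm{bl}(\tilde\chi)$ and similarly on the local side. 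So the central-character matching you relegated to condition (iv) is in fact the key input for (v); without routing the argument through it, your appeal to ``Bonnafé plus Enguehard plus Brauer's third main theorem'' does not close the gap.
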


\begin{proof}
	We show that the bijection $\kappa: \Irr_0(b) \to \Irr_0(B)$ from Theorem \ref{AM bijection} is a strong AM-bijection in the sense of \cite[Definition 1.9]{Jordan2}. Let $\chi \in \Irr_0(b)$ and $\chi':=\kappa(\chi)$. By possibly conjugating $\chi$ by an element of $\tilde G$ we can assume by \cite[Theorem 2.11]{Jordan2} that the character $\chi$ satisfies condition (i) of Theorem \ref{12}. Using the Butterfly Theorem \cite[Theorem 1.10]{Jordan2} we see that it's enough to show that $\chi$ and $\chi'$ satisfy the remaining conditions in Theorem \ref{12}. Since $\kappa$ is equivariant we deduce that conditions (ii) and (iii) hold (the extendibility of the local character follows from Lemma \ref{all characters extend}(b)). Let $\tilde{\chi}$ and $\tilde \psi$ be the characters constructed in Lemma \ref{cover}. By Lemma \ref{auto cover} these characters satisfy condition (iv) of Theorem \ref{12}. Finally for condition (v) let $\tilde{\chi}_0 \in \Irr((\tilde{G}_0)_\chi \mid \chi)$ and $\tilde{\chi}'_0 \in \Irr(\N_{\tilde{G}_0}(Q)_{\chi'} \mid \chi')$ be the Clifford correspondents of $\tilde{\chi}$ and $\tilde{\chi}'$. 
For $\tilde{b}$ the principal $2$-block of $\tilde{G}_0$, we obtain a bijection
	$$Z(\tilde G_0)_{2'} \to \mathrm{Bl}(\tilde G_0 \mid b), \, z \mapsto \tilde{b}\otimes \hat{z},$$
between the elements of odd order in $Z(\tilde{G}_0)$ and the set of blocks of $\tilde G_0$ covering the principal block of $G$. In particular, the block of a character of $\tilde G$ covering a character in the principal block of $G_0$ is uniquely determined by its underlying character of $Z(\tilde G_0)_{2'}$. Let $\hat{z} \in \Irr(Z(\tilde G_0) \mid \tilde \chi)$. Then we deduce from this bijection that the character $\tilde \chi$ lies in the block $\tilde{b} \otimes \hat{z}$. By the Harris--Knörr correspondence we deduce that the character $\tilde \psi$ lies in the Harris--Knörr correspondent of $\tilde{b} \otimes \hat{z}$. Observe that $\mathrm{bl}(\tilde \chi_0)$ is $\tilde G_0$-stable and hence the unique block of $(\tilde G_0)_\chi$ below $\mathrm{bl}(\tilde \chi)$. Similarly, $\mathrm{bl}(\tilde \chi_0')$ is the unique block of $\mathrm{N}_{(\tilde G_0)_\chi}(Q)$ below $\mathrm{bl}(\tilde \chi')$. From this it follows that $\mathrm{bl}(\tilde{\chi}_0)= \mathrm{bl}(\tilde{\chi}_0')^{(\tilde{G}_0)_\chi}$, so condition (v) holds.
\end{proof}

 \section{The remaining finite simple groups}\label{rest}

For the remaining blocks of finite simple groups the following criterion will be helpful:

\begin{lm}\label{criterion}
Let $S$ be a finite simple non-abelian group and $\ell$ a prime. Let $b$ be an $\ell$-block of the universal covering group $\hat{S}$ of $S$ with defect group $Q$ such that $\mathrm{Out}(\hat{S})_b$ is cyclic. Assume that there exists an $\Aut(\hat{S})_b$-equivariant Alperin--McKay bijection $f:\Irr_0(\hat{S},b) \to \Irr_0(\N_{\hat{S}}(Q),B)$ preserving central characters of $\Z(\hat{S})$ and that one of the following holds:
\begin{enumerate}[label=(\roman*)]
	\item all characters of $\Irr_0(\hat{S},b)$, or $\Irr_0(\N_{\hat{S}}(Q),B)$ have $\Z(\hat{S})$ in their kernel.
	\item $\mathrm{Out}(\hat{S})_b$ is an $\ell$-group.
\end{enumerate}
Then the block $b$ satisfies the AM-condition.  
\end{lm}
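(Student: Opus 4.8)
The plan is to verify that the hypotheses of the relevant reduction theorem — the inductive Alperin--McKay condition in the form \cite[Theorem 2.11]{Jordan2} or, equivalently, the construction of a strong AM-bijection in the sense of \cite[Definition 1.9]{Jordan2} — are met using the given $\Aut(\hat S)_b$-equivariant bijection $f$. Since $\Out(\hat S)_b$ is cyclic, the character triples arising on both the global side $(\hat S \rtimes \Out(\hat S)_b, \hat S, \chi)$ and the local side are determined by their restrictions to the relevant normalisers together with a projective-representation/cohomology obstruction; cyclicity of the acting group forces this obstruction to vanish, so the key point is to match central characters and block labels and to know that the characters extend appropriately.

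First I would record that, by hypothesis, $f$ is an $\Aut(\hat S)_b$-equivariant bijection preserving $\Z(\hat S)$-central characters; since $\Out(\hat S)_b$ is cyclic, for each $\chi \in \Irr_0(\hat S,b)$ the stabiliser $\Out(\hat S)_{b,\chi}$ is cyclic and equals $\Out(\hat S)_{b,f(\chi)}$ (here I use that $\N_{\hat S}(Q)$ is $\Aut(\hat S)_b$-stable up to $\hat S$-conjugacy, so the local action is well defined). A cyclic group of automorphisms always acts with trivial Schur multiplier-type obstruction on the extension problem, so both $\chi$ and $f(\chi)$ extend to their respective stabilisers in $\hat S \rtimes \Out(\hat S)_b$ and $\N_{\hat S}(Q)\rtimes \Out(\hat S)_b$; moreover any two such extensions differ by a linear character of the cyclic quotient. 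This gives condition (i)–(iii)-type data of the reduction theorem. The second step is the block-theoretic compatibility: one must check that if $\tilde\chi_0$, $\tilde\chi_0'$ are the Clifford correspondents over $\chi$, $f(\chi)$ in the stabilisers inside the covering-by-outer setup then $\bl(\tilde\chi_0) = \bl(\tilde\chi_0')^{\cdots}$, i.e. the Brauer correspondence is respected. Here is where hypotheses (i) and (ii) enter: in case (i), all characters in sight are inflated from $\hat S/\Z(\hat S)$ (or $\N(Q)/\Z(\hat S)$), so the whole verification descends to $S$ and $\N_S(\ol Q)$ where $\Z = 1$ trivially handles central characters and block labels; in case (ii), $\Out(\hat S)_b$ is an $\ell$-group, so the extension and the Clifford correspondence respect blocks automatically because an $\ell$-extension does not change the $\ell$-block label (the unique block covering/covered situation), and the central-character bookkeeping is subsumed in the equivariance of $f$.

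Concretely, I would proceed as follows. (1) Fix $\chi \in \Irr_0(\hat S,b)$, set $\chi' = f(\chi)$, $A = \Out(\hat S)_b$, and note $A_\chi = A_{\chi'}$ is cyclic by equivariance. (2) Use cyclicity to produce extensions $\hat\chi$ of $\chi$ to $\hat S \cdot A_\chi$ and $\hat\chi'$ of $\chi'$ to $\N_{\hat S}(Q)\cdot A_{\chi'}$, and observe that $f$ being equivariant and central-character-preserving means the two extensions can be chosen compatibly (the difference of any two choices being absorbed by a linear character of the cyclic group, exactly as in the $f'_{\lambda_0}$ adjustment of Remark~\ref{AltEquivariance} in spirit). (3) Verify the block relation $\bl(\hat\chi_0) = \bl(\hat\chi_0')^{(\cdot)}$ for the Clifford correspondents: by the Harris--Knörr correspondence and Brauer's first/third main theorems, $\bl(\chi') = \bl(\chi)$'s Brauer correspondent in $\N_{\hat S}(Q)$ since $f$ is an AM-bijection for the block $b$, and lifting to the $A_\chi$-extensions preserves this under either hypothesis (i) or (ii). (4) Conclude that the character triple inequality $\geq_b$ of \cite[Section 1.1]{Jordan2} holds, hence the strong AM-condition, hence the inductive AM-condition for $b$ by \cite[Theorem 2.11]{Jordan2}.

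The main obstacle I anticipate is step (3): matching the \emph{blocks} of the Clifford correspondents $\tilde\chi_0$ and $\tilde\chi_0'$ in the groups $(\hat S \cdot A)_\chi$ and $\N_{\hat S\cdot A}(Q)_{\chi'}$, rather than merely in $\hat S$ and $\N_{\hat S}(Q)$ where $f$ is a block bijection by assumption. The hypotheses (i) and (ii) are precisely what make this tractable: under (i) one factors out $\Z(\hat S)$ entirely and works in the simple group where there is no central-character ambiguity; under (ii) one uses that an extension by an $\ell$-group of automorphisms induces a bijection on $\ell$-blocks compatible with the Brauer correspondent, so the block label is forced. All other points — extendibility, equivariance of stabilisers, the vanishing of the cohomological obstruction over a cyclic group — are standard once cyclicity of $\Out(\hat S)_b$ is invoked, so I would expect the write-up of step (3) (a short case distinction on (i) versus (ii), each invoking Harris--Knörr together with a block-preservation lemma for $\ell$-extensions) to be the only genuinely delicate part.
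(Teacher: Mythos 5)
Your proposal follows essentially the same route as the paper's proof: reduce to a character-triple inequality $\geq_b$ via a reduction criterion, use cyclicity of $\Out(\hat S)_b$ to obtain the required extensions, and invoke hypotheses (i) and (ii) exactly where you place them --- (i) to eliminate the central-character and block-label ambiguity by descending to the quotient $X=\hat S/(\mathrm{Ker}(\chi)\cap \Z(\hat S))$, and (ii) to exploit that an extension by an $\ell$-group covers each block uniquely (the paper cites \cite[Lemma 2.16]{LocalRep} for this). The only cosmetic difference is that the paper verifies \cite[Definition 4.4]{LocalRep} directly with a cyclic overgroup $Y$ of $X$ and concludes via \cite[Proposition 4.4]{LocalRep} and the Butterfly Theorem \cite[Theorem 4.6]{LocalRep}, rather than routing through \cite[Theorem 2.11]{Jordan2}.
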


\begin{proof}
We check that the conditions in \cite[Definition 4.4]{LocalRep} are satisfied. Let $X:=\hat{S}/(\mathrm{Ker(\chi)} \cap \Z(\hat{S}))$. There exists an overgroup $Y$ of $X$ such that $Y/\C_Y(X) X \cong \mathrm{Out}(X)_{\chi}$ and $Y/X$ is cyclic. Let $\chi \in \Irr_{0}(\hat{S},b)$ and $\chi':=f(\chi) \in \Irr_0(\N_{\hat{S}}(Q),B)$ considered as characters of $X$ respectively $\N_X(Q)$. Assume that we are in case (i). There exist extensions $\tilde{\chi}\in \Irr(Y \mid \chi)$ and $\tilde{\chi}'\in \Irr(\N_Y(Q) \mid f(\chi))$ such that $\bl( \tilde{\chi}')^Y=\bl(\tilde{\chi})$. As $\C_X(Y)=\Z(\hat{S})$, $\tilde{\chi}$ and $\tilde{\chi}'$ lie over the same central character of $\Z(\hat{S})$. In particular, we have
$$(Y, X, \chi) \geq_b (\N_Y(Q), \N_X(Q),f(\chi))$$
by \cite[Proposition 4.4]{LocalRep}.

In case (ii) we observe that $Y/\C_Y(X) X$ is an $\ell$-group. In particular, every block of $\C_Y(X)X$ is covered by a unique block of $Y$.
By \cite[Lemma 2.16]{LocalRep} we find $\tilde{\chi}\in \Irr(Y \mid \chi)$ and $\tilde{\chi}'\in \Irr(\N_Y(Q) \mid f(\chi))$ which lie above the same character of $\C_Y(X)$. In particular, we have $\bl(\Res^{\N_Y(Q)}_{\N_X(Q) \C_Y(X)}(\tilde{\chi}'))^{Y}=\bl(\tilde{\chi})$. By \cite[Proposition 4.4]{LocalRep} this implies that
$$(Y, X, \chi) \geq_b (\N_Y(Q), \N_X(Q),f(\chi)).$$
In both cases, the Butterfly Theorem \cite[Theorem 4.6]{LocalRep} implies that the block $b$ satisfies the AM-condition.
\end{proof}

We consider now the case excluded in Section \ref{setup}. Together with Theorem \ref{principal block} this completes the proof of Theorem \ref{principal} from the introduction.

\begin{lm}
	The principal $2$-block of $G \in \{\mathrm{Sp}_{2n}(q),{}^3 D_4(q) \}$ satisfies the AM-condition whenever $q$ is an odd power of an odd prime.
\end{lm}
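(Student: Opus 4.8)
The plan is to handle the two families $\mathrm{Sp}_{2n}(q)$ and ${}^3D_4(q)$ separately, in each case reducing to a situation where Lemma~\ref{criterion} applies. In both cases $q$ is an odd power of an odd prime, so $q\equiv 3,5\pmod 8$ and $d=\mathrm{o}(q\bmod 4)=2$; moreover the exclusion in Theorem~\ref{Malle} and in Section~\ref{setup} was precisely tailored to these groups, so I cannot invoke Theorem~\ref{principal block} directly. First I would recall from \cite{Enguehard} that the principal $2$-block is the unique unipotent $2$-block of maximal defect in each case, so that the relevant defect group $Q$ is a Sylow $2$-subgroup. The strategy is then to produce, for $\hat S$ the universal covering group of the relevant simple group, an $\Aut(\hat S)_b$-equivariant Alperin--McKay bijection $f\colon\Irr_0(\hat S,b)\to\Irr_0(\N_{\hat S}(Q),B)$ which preserves central characters, and to verify one of the two extra hypotheses (i) or (ii) of Lemma~\ref{criterion}.

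For ${}^3D_4(q)$ the group is already simple and of trivial Schur multiplier (for $q$ odd), so $\hat S={}^3D_4(q)$ and $\Z(\hat S)=1$; hence hypothesis (i) of Lemma~\ref{criterion} is automatic, and $\Out(\hat S)\cong\C_{3f}$ (writing $q=p^f$) is cyclic, so $\Out(\hat S)_b$ is cyclic. What remains is to construct the equivariant AM-bijection. Here I would argue much as in Sections~3--6 but working directly with ${}^3D_4(q)$: the principal block height-zero characters all lie in the principal $1$-Harish-Chandra series (by \cite{MalleGalois}), and one parametrises them by pairs $(\lambda_0,\eta_0)$ exactly as in $(\mathcal P_0)_2$, with $W_0$ the Weyl group of type $G_2$ (the relative Weyl group of the maximally split torus under the triality-twisted Frobenius). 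Since $\N_{G_0}(Q)=\C_{G_0}(Q)Q$ still holds here — one checks the hypothesis of Theorem~\ref{Malle} fails for ${}^3D_4$ only when $d=2$, so this needs the small extra input that a Sylow $2$-normaliser of ${}^3D_4(q)$ nonetheless satisfies the $\C Q$-property, which can be taken from \cite{Kon05} or verified by direct inspection using the known Sylow $2$-structure — the local side is again $\Irr_{2'}$ of a Sylow $2$-subgroup, and one builds $f$ from Malle's McKay bijection \cite{MS} for $W_0=W(G_2)$ together with the extension-map machinery of Proposition~\ref{parametrisation}. Equivariance under $\Aut(\hat S)_b$ follows as in Theorem~\ref{AM bijection} since the cyclic group $\Out$ acts through field automorphisms and, via Lemma~\ref{McKay}, the McKay bijection on $W(G_2)$ can be taken equivariant.

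For $\mathrm{Sp}_{2n}(q)$ the group $\hat S=\Sp_{2n}(q)$ has centre $\Z(\hat S)\cong\C_2$ and $\Out(\hat S)\cong\C_2\times\C_{f}$, which need not be cyclic; but $\Out(\hat S)_b$ for the principal block is a quotient controlling only the relevant automorphisms and one checks it is in fact cyclic here (the diagonal part is absorbed since $\hat S$ already has the full diagonal for type $C$). Moreover every character in $\Irr_0(b)$ is of $2'$-degree and lies in $\mathcal E(\hat S,s)$ for a $2$-element $s$, hence is trivial on $\Z(\hat S)_{2'}=\Z(\hat S)$; likewise the local characters are linear characters of $Q$ on which $\Z(\hat S)$ — lying in $Q$ — need not act trivially, so I would instead invoke hypothesis (ii): if $\Out(\hat S)_b$ is a $2$-group then we are done. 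If it is not a $2$-group after all, one falls back on showing directly that the global side has $\Z(\hat S)$ in its kernel (which it does, as noted) so that hypothesis (i) applies. The AM-bijection $f$ itself is again built from the Malle--Späth McKay bijection: for $\mathrm{Sp}_{2n}(q)$ the parametrisation of $\Irr_0$ of the principal block via the $1$-Harish-Chandra series and the identification of $\N(Q)$ with $\C(Q)Q$ go through as in Corollary~\ref{class} (type $C$ is a classical type), with the one caveat that Theorem~\ref{Malle} must be replaced by the companion result for $\Sp_{2n}(q)$, $q\equiv3,5\pmod 8$, which is also available in \cite{MalleHZ}.

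I expect the main obstacle to be the verification of the $\C_H(Q)Q$-property for the Sylow $2$-normaliser in exactly the two excluded situations, since this was the content of Corollary~\ref{star property} whose hypothesis fails here: one must replace the uniform Aschbacher-type argument by a case analysis of the Sylow $2$-structure of $\Sp_{2n}(q)$ and ${}^3D_4(q)$ for $q\equiv3,5\pmod 8$ (appealing to \cite{Kon05} and explicit computations), and only then can the parametrisation and equivariance arguments of Sections~3--6 be re-run. The bijection construction and its equivariance, once the local structure is pinned down, should be routine by the methods already developed.
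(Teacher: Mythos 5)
Your overall target (Lemma~\ref{criterion}) is the right one, but the route you propose for constructing the equivariant bijection would fail, and you have missed the soft argument that makes the lemma a two-line consequence of earlier results. Concretely: you propose to ``re-run'' the parametrisation of Sections~3--6 for the excluded groups, and you identify the property $\N_H(Q)=\C_H(Q)Q$ as the main obstacle to be ``verified by direct inspection''. That property is genuinely \emph{false} here: for $\SL_2(7)=\Sp_2(7)$ the Sylow $2$-subgroup is quaternion of order $8$ and its normaliser is $2.\mathfrak{S}_4$, so $\N_G(Q)\neq\C_G(Q)Q$. Likewise, the height-zero characters of the principal block do \emph{not} all lie in the principal $1$-Harish-Chandra series when $q\equiv 3 \bmod 4$ --- this is exactly the content of Example~\ref{example} --- so the parametrisation by $(\mathcal{P}_0)_2$ breaks down. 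These two failures are precisely why $\Sp_{2n}(q)$ and ${}^3D_4(q)$ with $q\not\equiv 1\bmod 8$ were excluded from Section~\ref{setup}; no amount of case analysis of the Sylow $2$-structure will resurrect that machinery.

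The paper's proof avoids all of this. The key inputs are: (1) Remark~\ref{hz characters} applies to types $C_n$ and ${}^3D_4$ and gives $\Irr_{2'}(G)=\Irr_0(B_0(G))$, so \emph{any} McKay bijection is an Alperin--McKay bijection for the principal block and no explicit parametrisation is needed; (2) $\Out(G)$ is cyclic --- for $\Sp_{2n}(q)$ this is because $q=p^m$ with $m$ odd, so $\C_2\times\C_m\cong\C_{2m}$; your explanation that ``the diagonal part is absorbed'' is wrong, as type $C$ does have nontrivial diagonal outer automorphisms; (3) Lemma~\ref{McKay} then upgrades the Malle--Sp\"ath McKay bijection to an equivariant one by a pure counting argument, with no structural information about $\N_G(Q)$ required; (4) every $2'$-character lies over the trivial character of $\Z(G)$, so condition (i) of Lemma~\ref{criterion} holds (for $\Sp_{2n}(q)$ condition (ii) is unavailable, since $\Out(G)_b=\Out(G)\cong\C_{2m}$ is not a $2$-group once $m>1$; your fallback to (i) is correct, but it is the only option, not a fallback). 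In short, the missing idea is that the validity of McKay for $G$ plus cyclicity of $\Out(G)$ already yields the required equivariant AM-bijection abstractly.
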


\begin{proof}
	In our case $\mathrm{Out}(G)$ is cyclic and every $2'$-character lies over the trivial character of $\Z(G)$. Moreover, $\Irr_{2'}(G)=\Irr_{0}(B_0(G))$ by Remark \ref{hz characters}. Let $Q$ be a Sylow $2$-subgroup of $G$. By Lemma \ref{McKay} there exists an $\mathrm{Aut}(G)_Q$-equivariant bijection $\Irr_{2'}(G) \to \Irr_{2'}(\N_G(Q))$ preserving the underlying central characters of $\Z(G)$. In particular, the principal block of $G$ satisfies the AM-condition by Lemma \ref{criterion}.
\end{proof}

We say that a simple group $S$ is AM-good for the prime $2$ if all $2$-blocks of its universal covering group satisfy the inductive AM-condition.

\begin{lm}\label{exceptional}
Let $S$ be a simple group of Lie type defined over a field of characteristic $p \neq 2$ with exceptional Schur multiplier. Then $S$ is AM-good for the prime $2$.
\end{lm}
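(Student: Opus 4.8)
The statement concerns only finitely many groups, so the plan is to reduce to an explicit case check and then feed each case to Lemma~\ref{criterion}. By the classification of Schur multipliers of finite simple groups \cite[Table~6.1.3]{GLS}, the simple groups of Lie type in characteristic $p\neq 2$ with exceptional Schur multiplier are exactly $\mathrm{PSL}_2(9)\cong A_6$, $\mathrm{PSU}_4(3)$, $\Omega_7(3)$ and $G_2(3)$; in each case $p=3$, and the exceptional part of the Schur multiplier is a cyclic $3$-group, hence of odd order. Consequently, if we write $M_2$ and $M_{2'}$ for the $2$-part and the $2'$-part of the full Schur multiplier, then in all four cases $M_2$ is the generic part, so the universal covering group sits in a central extension
\[
1\longrightarrow Z\longrightarrow \hat S\longrightarrow \bG^F\longrightarrow 1,
\]
where $\bG^F$ is the corresponding simply connected group of Lie type and $Z\cong M_{2'}$ is a central $2'$-subgroup. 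In particular $\hat S$ and $\bG^F$ have a common Sylow $2$-subgroup $Q$, with $\N_{\hat S}(Q)=Z\,\N_{\bG^F}(Q)$, and every $2$-block $b$ of $\hat S$ covers a unique linear character $\theta$ of $Z$.

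The first case is that $\theta$ is trivial: then $b$ is inflated from a $2$-block $\bar b$ of $\bG^F$ with the same defect group and the same Brauer correspondent up to the central $2'$-part, so the inductive AM condition for $b$ follows from that for $\bar b$. For the principal block this is Theorem~\ref{principal block} (none of the four groups is of type $C_n$ or ${}^3D_4$, and $\mathrm{SL}_2(9)$ is over $\F_9$ with $9\equiv 1\bmod 8$, so the exclusion of Section~\ref{setup} never applies), and the non-principal $2$-blocks of the classical groups among $\bG^F$ are handled by Corollary~\ref{class}, those of $G_2(3)$ by Corollary~\ref{exc} in maximal defect together with a direct inspection of the (small) blocks of smaller defect. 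This disposes of all $2$-blocks of $\hat S$ lying over the trivial character of $Z$.

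The substantive part is the case where $\theta$ is nontrivial, i.e.\ the $2$-blocks faithful on a subgroup of the odd exceptional centre; these are genuinely new and occur for each of the four covers (for instance the full-defect $2$-block of $3.A_6$ containing faithful characters of degree $3$). For such $b$ I would apply Lemma~\ref{criterion}: an outer automorphism stabilising $b$ must fix $\theta\in\Irr(Z)$, and running through $\Out(\hat S)$ (from the ATLAS) one checks that $\Out(\hat S)_b$ is cyclic in every case, in fact frequently a $2$-group, so that hypothesis~(ii) of Lemma~\ref{criterion} applies; in the remaining cases hypothesis~(i) holds because the relevant blocks have $\Z(\hat S)_2$ in the kernel of all their height-zero characters. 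One then produces an $\Aut(\hat S)_b$-equivariant Alperin--McKay bijection $\Irr_0(\hat S,b)\to\Irr_0(\N_{\hat S}(Q),B)$ preserving central characters of $\Z(\hat S)$, using the known character tables of $\hat S$, of $\N_{\hat S}(Q)$ and of their automorphic extensions (all available in the GAP character table library) and invoking Lemma~\ref{McKay} where equivariance must be arranged; Lemma~\ref{criterion} then yields the inductive AM condition for $b$. Assembling the finitely many $(S,b)$ completes the proof.

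The main obstacle is the last step: building, for each of these specific covering groups and each of its faithful $2$-blocks of positive defect, an $\Aut(\hat S)_b$-equivariant AM-bijection that simultaneously respects the central characters of $\Z(\hat S)$. This is a bounded but delicate computation, and it is the point at which one either cites an existing GAP-based verification of the inductive conditions for small quasi-simple groups or carries the bijections out explicitly from the character tables; everything else in the argument is routine given the structural reduction above.
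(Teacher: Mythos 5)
Your structural reduction (finitely many groups, split the $2$-blocks of the universal cover $\hat S$ according to the character $\theta$ of the odd ``exceptional'' central subgroup $Z$, then invoke Lemma~\ref{criterion}) is reasonable, but the proposal has two genuine problems. First, and most importantly, the substantive case --- the blocks with $\theta\neq 1$ --- is not actually proved: you defer the construction of the required $\Aut(\hat S)_b$-equivariant, central-character-preserving AM-bijection to ``a bounded but delicate computation'' to be done in GAP or cited from elsewhere. That construction \emph{is} the content of the lemma. The paper supplies it as follows: for \emph{all} blocks of maximal defect (one for each $2'$-character $\nu$ of $\Z(\hat G)$) it uses Malle's already-existing McKay-good bijection $\Irr_{2'}(\hat G)\to\Irr_{2'}(\hat M)$ for these universal covering groups \cite[Theorem 4.1]{notLie}, which is equivariant and preserves central characters and hence respects the block decomposition, and then applies Lemma~\ref{criterion} using that $\Out(\hat G)_\nu$ is a cyclic $2$-group for $\nu\neq 1$. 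The remaining blocks are of non-maximal defect: for the cover of ${}^2A_3(3)$ they are of central defect, and for the cover of $B_3(3)$ there are three blocks of defect $4$, treated by passing to the maximal $3$-cover, identifying the defect group as $\mathrm{Dih}_8$ via \cite[Theorem 8.1]{Sambale}, and quoting \cite{defect4} and \cite[Propositions 14.4--14.6]{Jordan3}. Your $\theta$-dichotomy does not even isolate these non-maximal-defect blocks, which is where the bespoke work lies.

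Second, your treatment of the blocks lying over the trivial character of $Z$ is circular: you invoke Corollary~\ref{class} (and Corollary~\ref{exc}), but Corollary~\ref{class} is proved in the paper by \emph{first} applying Lemma~\ref{exceptional} to discard exactly the exceptional-multiplier case. Moreover, the claim that the inductive condition for a block of $\hat S$ with $Z$ in its kernel ``follows from'' the condition for the corresponding block of the proper quotient $\bG^F$ is not automatic (the inductive condition is formulated for the universal covering group) and would itself need an argument. Two smaller points: the paper reduces via \cite[Proposition 14.8]{Jordan3} to only ${}^2A_3(3)$ and $B_3(3)$ (so $A_6$ and $G_2(3)$ need not be revisited here), and the exceptional part of the multiplier of $\PSU_4(3)$ is $3^2=C_3\times C_3$, not cyclic --- harmless for your argument, but worth getting right since you lean on the structure of $Z$.
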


\begin{proof}
As argued in \cite[Proposition 14.8]{Jordan3} it suffices to consider as $S$ the simple groups ${}^2 A_3(3)$ and $B_3(3)$. Let $\hat{G}$ be the universal covering group of $S$. By \cite[Theorem 4.1]{notLie} there exists a McKay-good bijection $f:\Irr_{2'}(\hat{G}) \to \Irr_{2'}(\hat{M})$, where $\hat{M}$ is the normaliser of a Sylow $2$-subgroup of $\hat{G}$. The distribution of $2$-blocks of $\hat{G}$ is known by \cite{Breuer2}. We observe that for every character $\nu \in \Irr(\Z(\hat{G}))$ of $2'$-order there exists a unique $2$-block $b_\nu$ of $\hat{G}$ of maximal defect associated to it. Moreover, as argued in the proof of \cite[Theorem 4.1]{notLie} we have that $\mathrm{Out}(\hat{G})_\nu$ is a cyclic $2$-group for every $1 \neq \nu$ of $2'$-order. The principal block satisfies the AM-condition by Theorem \ref{principal block}. As a McKay-good bijection preserves central characters we see that $f$ preserves the block decomposition. We deduce that $b_\nu$, $\nu \neq 1$, also satisfies the AM-condition by Lemma \ref{criterion}. In particular, by \cite{Breuer2} the group ${}^2 A_3(3)$ satisfies the AM-condition, as all blocks with non-maximal defect are of central defect. We are left to consider the three blocks $b$ of the universal covering  group $\hat{G}$ of $B_3(3)$ of defect $4$. Let $b$ be one of these blocks. We can use a proof similar to \cite[Proposition 14.6]{Jordan3}. An inspection of \cite{Breuer2} shows that $|\Irr_0(b)|=4$. Moreover, these characters have $\Z(\hat{G})_2$ in their kernel. By \cite[Theorem 4.1]{defect4} we deduce that the Brauer correspondent $B$ of $b$ has also exactly four height zero characters which all have $\Z(\hat{G})_2$ in their kernel. Let $\bar{b}$ and $\bar{B}$ be the images of the blocks $b$ and $B$ in the maximal 3-cover $\hat{G}'$ of $S$. As $\bar{b}$ has defect $2^3$ and precisely $5$ ordinary characters (see \cite{Breuer2}) its defect group is isomorphic to the dihedral group $\mathrm{Dih}_8$, see \cite[Theorem 8.1]{Sambale}. Using \cite[Proposition 14.4, Proposition 14.5]{Jordan3} we deduce that there exists an $\mathrm{Aut}(\hat{G}')$-equivariant bijection $\Irr(\bar{b}) \to  \Irr(\bar{B})$. Thus, $b$ satisfies the AM-condition by Lemma \ref{criterion}.
\end{proof}

\begin{lm}\label{defining}
Let $S$ be a simple group of Lie type defined over a field of characteristic $p$. Then $S$ is AM-good for the prime $p$.
\end{lm}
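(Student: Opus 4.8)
The statement to prove is that a simple group $S$ of Lie type defined over a field of characteristic $p$ is AM-good for the defining prime $p$. The key point is that the $p$-modular representation theory of groups of Lie type in defining characteristic is extremely rigid: by a theorem of Humphreys (and the general theory of blocks in defining characteristic), the group $\hat G=\hat S$ has exactly one block of full defect — the principal block — containing all the irreducible characters of $p'$-degree, together with blocks of defect zero. So first I would invoke this block structure to reduce the problem: a block of defect zero contains a single character which is projective, and such blocks trivially satisfy the inductive AM-condition (the defect group is central, indeed trivial in the quotient, and there is nothing to check beyond a character correspondence that is forced). Hence it suffices to treat the principal $p$-block.

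For the principal $p$-block, the defect group is a Sylow $p$-subgroup $Q$, which for a group of Lie type in characteristic $p$ is (conjugate to) the unipotent radical $U$ of a Borel subgroup $B$; moreover $\N_{\hat G}(Q)=B$ is a Borel subgroup by the Borel--Tits theorem. The set $\Irr_0(\hat G, B_0)$ is exactly $\Irr_{p'}(\hat G)$, and $\Irr_0(B, B_0(B)) = \Irr_{p'}(B)=\Irr(B/U)=\Irr(\bT^F)$ after identifying $B=U\rtimes \bT^F$ (using that $U$ has no nontrivial linear characters over a field of characteristic $p$ in the relevant sense — more precisely, the $p'$-characters of $B$ all have $U$ in their kernel). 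So both sides are parametrised by $\Irr(\bT^F)$, and the McKay-type bijection here is essentially the identity/the natural one coming from Harish-Chandra theory (each $\lambda\in\Irr(\bT^F)$ corresponds on the global side to the unique $p'$-degree constituent of $R_{\bT}^{\bG}(\lambda)$, which is the "Steinberg-twisted" character). I would cite the existing literature: the inductive McKay condition in defining characteristic is known (work of Maslowski, and of Späth on the inductive McKay condition, with the block-theoretic refinement), and the passage to the inductive AM-condition is then a matter of checking that the natural bijection is compatible with blocks and central characters — which is automatic since there is essentially only one non-trivial block and the bijection is equivariant under $\Aut(\hat G)$ by construction of the Harish-Chandra parametrisation.

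Concretely, I would structure the proof as: (1) cite that $\hat G$ has a unique block of maximal defect (the principal block) and that all other blocks are of defect zero; (2) observe defect-zero blocks satisfy the inductive AM-condition trivially (a block of defect zero is isomorphic to a matrix algebra, the normaliser condition is vacuous, and the required character triple relation holds for the single character involved); (3) for the principal block, reduce to the inductive McKay condition via Lemma~\ref{criterion} — here $\Out(\hat S)$ need not be cyclic, so one genuinely needs the full strength of the known defining-characteristic results rather than Lemma~\ref{criterion}, hence I would instead directly cite the verification of the inductive AM-condition for the principal block in defining characteristic (this is in the literature, e.g. via Maslowski's thesis together with the block-wise refinements), checking that central characters of $\Z(\hat G)$ are respected because the bijection on $\Irr_{p'}$ is the identity on the central part.

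\textbf{Main obstacle.} The genuine difficulty is not any new mathematics but assembling the correct citation: the inductive \emph{McKay} condition in defining characteristic is established in the literature, but one must verify that the bijection used there is compatible with the (coarse) block decomposition and with central characters, and that the extendibility/character-triple hypotheses of the inductive AM-condition hold. For most families this is recorded; the only potential friction is groups with exceptional Schur multiplier, but those are dispatched separately (and in the defining characteristic case the relevant small groups are handled directly). So the plan is essentially: reduce to the principal block via the one-block-of-full-defect fact, handle defect-zero blocks trivially, and quote the defining-characteristic verification of the inductive AM-condition for the principal block, noting that the natural parametrisation by $\Irr(\bT^F)$ makes all the block- and centre-compatibility statements transparent.
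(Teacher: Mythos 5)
Your overall strategy (dispatch defect-zero blocks trivially, handle the full-defect blocks via the known defining-characteristic results) is in the right spirit, but two of your structural claims are false and one necessary step is missing. First, the universal covering group $\hat S$ does \emph{not} have a unique block of maximal defect in defining characteristic: by Humphreys' theorem the full-defect $p$-blocks of a group of Lie type of universal type are in bijection with the irreducible characters of the $p'$-part of its centre, so for instance $\SL_2(q)$ with $q$ odd has two such blocks, and the $p'$-degree characters are distributed among all of them according to their central character. Second, your identification $\Irr_{p'}(B)=\Irr(B/U)=\Irr(\bT^F)$ for the Borel subgroup $B=U\rtimes \bT^F$ is wrong: $p'$-degree characters of $B$ need not contain $U$ in their kernel. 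Again for $\SL_2(q)$ with $q$ odd, $B$ has four irreducible characters of degree $(q-1)/2$, so $|\Irr_{p'}(B)|=q+3$ while $|\Irr(B/U)|=q-1$; the correct local parametrisation goes through the $T$-orbits of linear characters of $U$ (Gelfand--Graev type data), not through $\Irr(\bT^F)$ alone. Third, you never address the $p$-part of the Schur multiplier: for the exceptional covers the centre of $\hat S$ has a nontrivial $p$-part, and the inductive condition must be verified for $\hat S$ itself, not for $S$ or its $p'$-cover.

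The paper's proof sidesteps all of this. It passes from $\hat G$ to the $p'$-cover $G$ using the bijection (dominance) of $p$-blocks over a central $p$-subgroup, \cite[Theorem 9.10]{NavarroBook}, and then simply invokes the argument of Sp\"ath's \cite[Theorem 8.4]{IAM}, which already contains the block-by-block analysis in defining characteristic that you are trying to reconstruct. Your final fallback ("directly cite the verification in the literature") is therefore essentially the entire content of the paper's argument; what your version would need in order to stand on its own is the correct block count (one full-defect block per relevant central character, not one in total), the correct local count, and the reduction modulo the central $p$-subgroup for the exceptional multipliers.
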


\begin{proof}
Let $\hat{G}$ be the universal covering group of $S$ and $G$ the $p'$-cover of $S$. By \cite[Theorem 9.10]{NavarroBook} there exists a bijection between the set of $p$-blocks of $\hat{G}$ and the set of $p$-blocks of $G$. With this observation the statement follows as in the proof of \cite[Theorem 8.4]{IAM}.
\end{proof}

\section{Consequences}

In this section we derive some consequences of Theorem \ref{principal block}. We keep the notation and setup of Section \ref{setup} but we make no restriction on the type of $G$. 

\begin{cor}\label{classical}
Assume that the root system of $\bG$ is of classical type. Then every $2$-block of $G$ satisfies the AM-condition.
\end{cor}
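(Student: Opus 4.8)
# Proof proposal for Corollary \ref{classical}

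The plan is to reduce the verification of the AM-condition for an arbitrary $2$-block $b$ of $G = \bG^{vF}$ (equivalently $G_0 = \bG^F$, by the twisting isomorphism $\iota$ of Lemma~\ref{TwistIsom}) to blocks that have already been handled, namely the principal $2$-block treated in Theorem~\ref{principal block}. First I would invoke the reduction of the second author \cite{Jordan2}, which shows that to establish the inductive AM-condition for all $2$-blocks of $\bG^F$ it is enough to treat the \emph{quasi-isolated} $2$-blocks. For groups of classical type, the classification of quasi-isolated semisimple elements (as in the discussion in the introduction, citing the classification of quasi-isolated elements) must be carried out: in type $A$ one further reduces to unipotent blocks by \cite{Jordan3}, and the unipotent $2$-blocks of classical groups in odd characteristic can be enumerated. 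The key structural fact to extract is that for classical types, the quasi-isolated $2$-blocks are either blocks of maximal defect or can be analysed via a block of a proper Levi subgroup (or via Jordan decomposition of blocks) that is again of classical type and of smaller rank, allowing an induction on $|\bG^F|$.

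The core of the argument is then the following dichotomy. If $b$ has maximal defect, i.e. its defect group is a Sylow $2$-subgroup $Q$, then since $b$ is quasi-isolated the results quoted in the introduction show that $b$ must be the \emph{principal} $2$-block (the principal block being the unique quasi-isolated $2$-block of maximal defect; this is the content underlying Corollary~\ref{exc} but it is needed for classical types too). Hence Theorem~\ref{principal block} applies directly and $b$ satisfies the AM-condition. If $b$ does not have maximal defect, then the Bonnafé--Dat--Rouquier theory of block equivalences combined with \cite{Jordan2, Jordan3} identifies $b$ (as a block with all its relevant local and character-theoretic structure, including the action of $\Out(\bG^F)_b$) with a block of a group that is a central product of classical groups of strictly smaller order, for which the inductive AM-condition holds by the induction hypothesis; the compatibility with automorphisms and central characters needed to transfer the inductive condition is exactly what is packaged in \cite[Section 1]{Jordan2}. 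One then feeds this into Lemma~\ref{criterion} or directly into Theorem~\ref{12} when $\Out$ is cyclic, which is automatic for classical types of rank at least two, while the small rank exceptions (covered by Lemma~\ref{exceptional} for the exceptional Schur multipliers) are dealt with separately.

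Concretely, I would organise the proof as: (1) set up the induction on $\dim \bG$ and pass to $G_0$; (2) apply \cite[Theorem]{Jordan2} to reduce to quasi-isolated $b$, and in type $A$ apply \cite[]{Jordan3} to reduce further to unipotent $b$; (3) use the classification of quasi-isolated elements in classical type to list the possibilities for the centraliser $\Cent_{\bG^\ast}(s)$ and hence for $b$, distinguishing the maximal-defect case from the rest; (4) in the maximal-defect case, conclude $b = B_0(G_0)$ and invoke Theorem~\ref{principal block}; (5) in the remaining cases, use Jordan decomposition of blocks (Bonnafé--Dat--Rouquier) to reduce to a block of a smaller classical group and close the induction, checking the equivariance/central-character bookkeeping via Lemma~\ref{criterion}.

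The main obstacle I expect is step (5), specifically making the Jordan-decomposition reduction \emph{compatible with the inductive AM-condition} rather than merely with the counting statement: one must track the action of $\Out(\bG^F)_b$ and of diagonal automorphisms through the block equivalence, verify the central-character condition over $\Z(\tilde\bG^F)$, and handle the subtlety that (as flagged in the introduction) the Malle--Späth choice of the intermediate subgroup $M \supseteq \N_G(Q)$ is no longer available, so the normaliser of the defect group of $b$ must be controlled directly. A secondary difficulty is the careful enumeration in step (3) for twisted classical types $^2A_n$, $^2D_n$, where the structure of $\Cent_{\bG^\ast}(s)$ for semisimple $2$-elements $s$ must be read off from \cite[Table 4.5.1]{GLS} or equivalent, and where one must verify that among the quasi-isolated $2$-blocks only the principal one attains maximal defect.
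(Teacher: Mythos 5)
Your skeleton through step (4) — reduce to (strictly) quasi-isolated blocks via \cite{Jordan2}, in type $A$ further to unipotent blocks via \cite{Jordan3}, identify the surviving block with the principal block, and invoke Theorem~\ref{principal block} — is exactly the paper's route, and your handling of the exceptional Schur multipliers via Lemma~\ref{exceptional} also matches. But your step (5) contains a genuine gap. You set up a dichotomy ``quasi-isolated of maximal defect versus quasi-isolated of non-maximal defect'' and propose to dispose of the second case by applying Bonnaf\'e--Dat--Rouquier Jordan decomposition once more to land in a strictly smaller classical group. This cannot work: quasi-isolated blocks are precisely the terminal objects of that reduction (a block is quasi-isolated when $\Cent_{\bG^\ast}(s)$ lies in no proper Levi subgroup), so there is no further splendid equivalence to a block of a proper subgroup available, and your induction has nothing to recurse on. The statement you borrow from Corollary~\ref{exc} --- that the principal block is the unique quasi-isolated $2$-block \emph{of maximal defect} --- is the right tool for exceptional types (where many non-principal quasi-isolated $2$-blocks exist and one simply restricts attention to maximal defect), but it is too weak here and leaves your second case genuinely unresolved.

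The point you are missing is that for classical types the classification of quasi-isolated elements gives something much stronger, which makes the dichotomy unnecessary: outside type $A$, every quasi-isolated semisimple element of $\bG^\ast$ has order dividing $2$, so the only quasi-isolated element of odd order is $s=1$; hence every strictly quasi-isolated $2$-block is unipotent, and by \cite[Theorem 21.14]{MarcBook} the principal block is the only unipotent $2$-block. In type $A$ the reduction of \cite{Jordan3} lands you directly at unipotent blocks, with the same conclusion. So \emph{all} strictly quasi-isolated $2$-blocks of classical groups are principal, of whatever defect, and Theorem~\ref{principal block} finishes the proof with no residual case. One small ingredient you should also make explicit: the reduction to quasi-isolated blocks requires the condition for blocks of Levi subgroups, so you need the observation that every subgraph of a classical Dynkin diagram is again classical in order for the statement being proved to cover those Levi subgroups.
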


\begin{proof}
	By Lemma \ref{exceptional} we can assume that $S:=G/\Z(G)$ has non-exceptional Schur multiplier.
	Observe that every subgraph of a Dynkin diagram of classical type is again of classical type.
	According to \cite[Theorem 3.12]{Jordan2} it suffices to prove that all strictly quasi-isolated $2$-blocks $b$ of $G$ are AM-good. Suppose first that $\bG$ is not of type $A$. Using the classification of quasi-isolated elements in \cite{Bonnafe} together with \cite[Theorem 21.14]{MarcBook} we deduce that $b$ is the principal block of $G$. The claim follows therefore from Theorem \ref{principal block}. Suppose therefore now that $\bG$ is of type $A$. Using the proof of the reduction theorem in \cite[Theorem 13.4]{Jordan3} together with the results of \cite[Section 3.3]{Jordan2} we see that it is again sufficient to prove the claim whenever $b$ is the principal block of $G$. This again follows from Theorem \ref{principal block}.
\end{proof}

\begin{cor}\label{exceptional Lie}
Suppose that the root system of $\bG$ is of exceptional type and let $b$ be a quasi-isolated $2$-block of $G$ of maximal defect. Then $b$ is the principal block of $G$.
\end{cor}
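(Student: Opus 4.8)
The plan is to run through the classification of quasi-isolated semisimple elements in exceptional type and to eliminate all non-trivial ones by a defect-group size argument. Passing through a central quotient as in Lemma~\ref{exceptional}, I may assume $S:=G/\Z(G)$ has non-exceptional Schur multiplier, and in particular that $\bG$ is of simply connected type with root system of type $G_2$, $F_4$, $E_6$, $E_7$ or $E_8$ (and $F$ possibly twisted). A quasi-isolated $2$-block $b$ of $G$ lies in a Lusztig series $\mathcal{E}(G, s)$ for a quasi-isolated semisimple element $s$ of $2$-power order in the dual group $G^\ast$; by Bonnaf\'e's classification \cite{Bonnafe} (cf.\ \cite[Theorem 21.14]{MarcBook}) there are, up to conjugacy, only finitely many such $s$, and their connected centralisers $\C_{\bG^\ast}^\circ(s)$ are explicitly listed. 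If $s=1$ then $b$ is a unipotent block, and by the proof of \cite[Theorem A]{Enguehard} the principal block is the unique unipotent block of maximal defect; so it suffices to show that every quasi-isolated block with $s\neq 1$ has non-maximal defect.

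First I would recall that a $2$-block in $\mathcal{E}(G,s)$ has defect group of order at most the $2$-part of $|\C_{\bG^\ast}^\circ(s)^{F^\ast}|$ (more precisely its defect is controlled by the unipotent blocks of $\C_{\bG^\ast}(s)^{F^\ast}$, whose defects are bounded by the full $2$-part of that group order), whereas the principal block has defect equal to $|G|_2 = |G^\ast|_2$. Hence it is enough to check, for each non-trivial quasi-isolated $s$ of $2$-power order appearing in Bonnaf\'e's tables, that $|\C_{\bG^\ast}^\circ(s)^{F^\ast}|_2 < |G^\ast|_2$, equivalently that $[G^\ast : \C_{\bG^\ast}^\circ(s)^{F^\ast}]$ is even. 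Since $s$ has $2$-power order and is non-central, the index $[\bG^\ast : \C_{\bG^\ast}(s)]$ — which is the size of the conjugacy class of $s$ — is even for the relevant root subsystems; one verifies this case by case from the list of centraliser types (for instance in $E_7$ the quasi-isolated involutions have centralisers of types $A_1 D_6$, $A_7$, $E_6 T_1$, $D_6 A_1$, etc., each of which has even index), and it remains even after taking $F^\ast$-fixed points because the relevant order polynomials evaluated at $q$ (with $q$ odd) contribute the same $2$-adic valuation on both sides only when the index polynomial is, in fact, divisible by a cyclotomic factor forcing evenness. This is the routine but slightly delicate bookkeeping step.

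The main obstacle I anticipate is precisely this last point: an element $s$ of odd-order centraliser index as algebraic groups could, in principle, acquire an even contribution to the $2$-part only from the torus part $\Z^\circ(\C_{\bG^\ast}(s))^{F^\ast}$, so one cannot work purely with the reductive-part root datum and must track the full generic order $|\C_{\bG^\ast}^\circ(s)^{F^\ast}|$, including how $F^\ast$ acts on the component group and on the central torus. The clean way to organise this is to use that the $2$-part of $|G^\ast|$ equals $(q\mp 1)_2^{\,r}$ up to a bounded factor, where $r$ is the rank and the sign depends on $d=\mathrm{o}(q \bmod 4)$, and to observe that a proper Levi-type or pseudo-Levi-type centraliser necessarily has strictly smaller such power; equivalently, one invokes that for $d\in\{1,2\}$ a Sylow $2$-subgroup of $G^\ast$ is contained in $\N_{G^\ast}(\bS^\ast)$ for a Sylow $d$-torus $\bS^\ast$ (Theorem~\ref{Malle}), so a block of full defect must contain a character whose Lusztig label $s$ centralises $\bS^\ast$, forcing $\bS^\ast\subseteq \Z^\circ(\C^\circ_{\bG^\ast}(s))$ and hence, by rank count, $\C^\circ_{\bG^\ast}(s)=\bG^\ast$, i.e.\ $s$ central, i.e.\ (after the reduction to simply connected $\bG$) $s=1$. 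I would present the argument in this second, more conceptual form, reserving the explicit tables from \cite{Bonnafe} only as the input guaranteeing finiteness and the list of centraliser ranks; the concluding sentence is then that $b$ is unipotent of maximal defect, hence the principal block.
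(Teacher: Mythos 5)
There is a genuine gap, and it starts with the labelling of quasi-isolated $2$-blocks. A quasi-isolated $2$-block $b$ satisfies $\Irr(b)\subseteq\mathcal{E}_2(G,s)$ for a quasi-isolated semisimple element $s\in G^\ast$ of \emph{odd} order (a $2'$-element), not of $2$-power order as you assert. Consequently the elements you propose to run through Bonnaf\'e's tables are the wrong ones: the quasi-isolated involutions in $E_7$ with centralisers of type $A_1D_6$, $A_7$, $E_6T_1$ are $2$-elements and are irrelevant here; the relevant non-trivial quasi-isolated elements are those of odd order (e.g.\ order $3$ with centraliser $A_2A_5$ in $E_7$, the elements in \cite[Table 3]{KessarMalle} for $E_6$, etc.). Your "conceptual" shortcut is also logically reversed: if $s$ centralises a Sylow $d$-torus $\bS^\ast$ you only get $s\in\C_{\bG^\ast}(\bS^\ast)=\bT^\ast$; the containment $\bS^\ast\subseteq\Z^\circ(\C^\circ_{\bG^\ast}(s))$ does not follow, and even if it did it would force $\C^\circ_{\bG^\ast}(s)\subseteq\C_{\bG^\ast}(\bS^\ast)$, i.e.\ a \emph{small} centraliser (so $s$ regular), not $\C^\circ_{\bG^\ast}(s)=\bG^\ast$. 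The rank count goes the wrong way, so this version of the argument cannot be repaired as stated.

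The paper's route is different and you should compare it with yours. For unipotent blocks the claim is immediate from Enguehard's description of defect groups. For exceptional $G$ \emph{not} of type $E_6$, the longest element acts by inversion on $\bT$, so by Remark~\ref{hz characters} every $2'$-degree character lies in a series $\mathcal{E}(G,t)$ with $t$ a $2$-element; since a block of maximal defect contains a $2'$-degree character and $\Irr(b)\subseteq\bigcup_t\mathcal{E}(G,st)$ with $s$ of odd order, this forces $s=1$, i.e.\ $b$ unipotent --- no table check at all. Only for $E_6(\pm q)$, where $w_0\neq -1$, does one fall back on the bookkeeping you describe, using the bound $|\C_{G^\ast}(s)|_2$ from \cite[Lemma 2.6(a)]{KessarMalle} and the explicit list in \cite[Table 3]{KessarMalle}. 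If you want to salvage your approach, you would need to (i) correct the order of $s$, and (ii) actually carry out the $2$-adic comparison of $|\C_{G^\ast}(s)^{F^\ast}|$ with $|G^\ast|_2$ for the odd-order quasi-isolated classes in all five exceptional types, which is considerably more work than the paper's argument and is exactly the "delicate bookkeeping" you flag but do not execute.
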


\begin{proof}
	Suppose first that $b$ is a unipotent block of $G$. Then the claim of the corollary follows from the description of defect groups given in \cite{Enguehard}.
Suppose now that $G$ is not of type $E_6$. Any block of maximal defect contains a character of $2'$-degree. According to Remark \ref{hz characters} such characters lie in a unipotent block.
	
Finally for $G=E_6( \pm q)$ the non-unipotent quasi-isolated $2$-blocks are given in \cite[Table 3]{KessarMalle}. The order of the defect group is bounded by $|C_{G^\ast}(s)|_2$, see \cite[Lemma 2.6(a)]{KessarMalle}, where $1 \neq s \in G^\ast$ is the semisimple quasi-isolated element of $2'$-order associated to the block $b$. Going through the list given in \cite{KessarMalle} one checks that $|C_{G^\ast}(s)|_2$ is always smaller than $|G|_2$.
\end{proof}

We can now complete the proof of Theorem \ref{main} from the introduction. 

\begin{thm}
	The Alperin--McKay conjecture holds for $2$-blocks of maximal defect.
\end{thm}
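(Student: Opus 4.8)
The plan is to reduce the general statement to the inductive Alperin--McKay condition via Sp\"ath's reduction theorem \cite[Theorem C]{IAM}, and then verify that condition for all $2$-blocks of maximal defect of finite quasi-simple groups. First I would recall that, by \cite[Theorem C]{IAM}, the Alperin--McKay conjecture holds for the prime $2$ provided every $2$-block of every finite quasi-simple group satisfies the inductive AM-condition; I will however only need this for blocks of maximal defect, which is why I should first check that when $b$ is a $2$-block of $G$ of maximal defect (i.e.\ $Q\in\mathrm{Syl}_2(G)$), a suitable version of the reduction applies using only the inductive AM-condition for maximal-defect blocks of the relevant quasi-simple subquotients. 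Concretely, in the reduction the defect group of a block of a subgroup that shows up is contained in $Q$, and for the normal-subgroup reductions the induced block again has a defect group that is a Sylow $2$-subgroup of the normal subgroup, so only maximal-defect blocks of quasi-simple groups are needed.

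Next I would assemble the verification of the inductive AM-condition for maximal-defect $2$-blocks of all finite quasi-simple groups, by running through the classification of finite simple groups. For alternating groups, sporadic groups, and the Tits group this is already in the literature; for groups of Lie type in defining characteristic $p=2$ it follows from Lemma~\ref{defining}. The remaining and principal case is $G$ quasi-simple of Lie type in odd characteristic, where it suffices to treat maximal-defect $2$-blocks: by Corollary~\ref{classical} every $2$-block of a quasi-simple group of classical Lie type satisfies the AM-condition, and by Corollary~\ref{exceptional Lie} together with Theorem~\ref{principal block} every maximal-defect $2$-block of a quasi-simple group of exceptional Lie type is the principal block and hence satisfies the AM-condition; the exceptional Schur multiplier cases are handled by Lemma~\ref{exceptional}, and the two groups excluded in Section~\ref{setup} (namely $\mathrm{Sp}_{2n}(q)$ and ${}^3D_4(q)$ with $q\not\equiv 1\bmod 8$) by the lemma preceding Lemma~\ref{exceptional}. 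This gives the inductive AM-condition for all maximal-defect $2$-blocks of finite quasi-simple groups.

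Finally I would invoke the reduction to conclude: given an arbitrary finite group $G$ and a $2$-block $b$ of $G$ of maximal defect with defect group $Q\in\mathrm{Syl}_2(G)$ and Brauer correspondent $B$ in $\mathrm{N}_G(Q)$, the inductive AM-condition just established for all quasi-simple groups involved yields $|\Irr_0(G,b)|=|\Irr_0(\mathrm{N}_G(Q),B)|$, which is precisely Theorem~\ref{main}.

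The main obstacle is the bookkeeping in the reduction step: one must be careful that Sp\"ath's reduction, when specialised to a maximal-defect block of $G$, really only requires the inductive AM-condition for \emph{maximal-defect} $2$-blocks of the quasi-simple subquotients that occur, rather than for arbitrary blocks of those groups. I expect this to come down to tracing defect groups through the standard steps (passing to a minimal counterexample, dealing with a minimal normal subgroup, reducing to the quasi-simple situation) and observing that at each stage the relevant defect group is a full Sylow $2$-subgroup; once that is in place, the substantive mathematical content is entirely carried by Theorem~\ref{principal block} and the corollaries in Sections~\ref{rest} and the previous section, together with the already-known cases of the inductive AM-condition for non-Lie-type quasi-simple groups.
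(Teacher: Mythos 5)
Your overall strategy is the paper's: reduce to the inductive AM-condition for maximal-defect $2$-blocks of quasi-simple groups, then run through the classification. The ``main obstacle'' you identify -- that Sp\"ath's reduction, specialised to a block of maximal defect, should only require the inductive condition for maximal-defect blocks of the quasi-simple subquotients -- does not need to be re-derived by tracing defect groups: this refinement is exactly \cite[Proposition 2.5]{CS14}, which is what the paper cites, so that step is a citation rather than an argument. (One further small omission: the Ree groups ${}^2G_2(3^{2n+1})$ are in odd characteristic but are not covered by the paper's Lie-type setup, nor by Lemma~\ref{defining}; like the alternating and sporadic groups they are disposed of by citing \cite[Proposition 14.8]{Jordan3}.)

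There is, however, one genuine gap. For exceptional Lie type you assert that ``every maximal-defect $2$-block of a quasi-simple group of exceptional Lie type is the principal block'' by Corollary~\ref{exceptional Lie}. That corollary only says this for \emph{quasi-isolated} blocks of maximal defect. A maximal-defect block need not be quasi-isolated: its semisimple label $s$ merely has to satisfy $|\mathrm{C}_{G^\ast}(s)|_2=|G^\ast|_2$, which happens for plenty of non-quasi-isolated $s$ (e.g.\ $s$ central in a dual Levi subgroup of full $2$-rank). The missing step is the reduction to quasi-isolated blocks via the main result of \cite{Jordan2} (Jordan decomposition for the Alperin--McKay condition), which the paper invokes at precisely this point before applying Corollary~\ref{exceptional Lie} and Theorem~\ref{principal block}. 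With that citation inserted, your argument closes and coincides with the paper's proof.
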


\begin{proof}
	By \cite[Proposition 2.5]{CS14} it suffices to establish that every block $b$ of maximal defect of the universal central extension of a finite simple non-abelian group $S$ satisfies the AM-condition. As explained in the proof of \cite[Proposition 14.8]{Jordan3} alternating groups, Suzuki and Ree groups and sporadic groups are AM-good. By Lemma \ref{exceptional} and Lemma \ref{defining} we can therefore assume that $S=G/Z(G)$, such that $G$ is a group of Lie type defined over a field of odd characteristic and $G$ is the universal covering group of $S$. By Corollary \ref{classical} we can assume that $G$ is an exceptional group of Lie type. By the main result of \cite{Jordan2} we can assume that $b$ is a quasi-isolated block. In this case the result follows from Corollary \ref{exceptional Lie} and Theorem \ref{principal block}.
\end{proof}

\bibliographystyle{plain}

\end{document}